\DeclareMathOperator*{\argmax}{arg\,max}
\DeclareMathOperator*{\argmin}{arg\,min}
\newcommand{\overbar}[1]{\mkern 1.5mu\overline{\mkern-1.5mu#1\mkern-1.5mu}\mkern 1.5mu}
\def\Put(#1,#2)#3{\leavevmode\makebox(0,0){\put(#1,#2){#3}}}
\newcommand {\bu} {\mathbf{u}}
\newcommand {\bv} {\mathbf{v}}
\newcommand {\bY} {\mathbf{Y}}
\newcommand{\E}[1]{\ensuremath{\mathbb{E}\left(#1\right)}}
\newcommand{\var}[1]{\ensuremath{\mathbb{V}\left( #1\right)}}
\newcommand{\cov}[2]{\ensuremath{\mathbb{C}\left( #1,#2\right)}}
\newcommand{\Pp}[1]{\ensuremath{\mathbb{P}\left( #1\right)}}
\newtheorem{lemma}{Lemma}
\begin{document}

\title{Adaptive stratified sampling for non-smooth problems}

\author{Per Pettersson$^{a}$, Sebastian Krumscheid$^{b}$ \\
$^{a}$ NORCE Norwegian Research Centre, N-5838 Bergen, Norway \\
$^{b}$ RWTH Aachen University, 52072 Aachen, Germany}

\date{}

\maketitle

\begin{abstract}

  Science and engineering problems subject to uncertainty are
  frequently both computationally expensive and feature non-smooth
  parameter dependence, making standard Monte Carlo too slow, and
  excluding efficient use of accelerated uncertainty quantification
  methods relying on strict smoothness assumptions. To remedy these
  challenges, we propose an adaptive stratification method suitable
  for non-smooth problems and with significantly reduced variance
  compared to Monte Carlo sampling. The stratification is iteratively
  refined and samples are added sequentially to satisfy an allocation
  criterion combining the benefits of proportional and optimal
  sampling. Theoretical estimates are provided for the expected
  performance and probability of failure to correctly estimate
  essential statistics.  We devise a practical adaptive stratification
  method with strata of the same kind of geometrical shapes,
  cost-effective refinement satisfying a greedy variance reduction
  criterion. Numerical experiments corroborate the theoretical
  findings and exhibit speedups of up to three orders of magnitude
  compared to standard Monte Carlo sampling.

\end{abstract}

\section{Introduction}
\label{sec:intro}

Many complex engineering problems are non-smooth functions of physical
parameters with unknown values that can be modeled as random
variables. Problems where the solution or its derivative is non-smooth
or even discontinuous in uncertain physical parameters include
high-speed flow in computational fluid dynamics~\cite{Hosder_etal_08},
porous media flows~\cite{Christie_etal_06} and
transport~\cite{Tartakovsky_Broyda_11}, weather and climate
predictions~\cite{Palmer_00}, and
geomechanics~\cite{Pereira_etal_16}.
Due to the non-smooth parameter dependence, quantifying the
uncertainty in the solution of these problems commonly involves
repeatedly sampling from the parameter domain. As a single evaluation
of the underlying model is in general very computationally demanding,
efficient sampling is essential to keep the total number of samples at
a minimum to obtain some prescribed statistical tolerance
accuracy. For classic Monte Carlo based sampling methods, for example,
the error decays as $CN^{-1/2}$, where $C$ is typically well
approximated by a constant depending on the problem at hand, and $N$
is the number of samples. Increasing the number of samples by two
orders of magnitude to get a single order of magnitude error reduction
is often numerically intractable, in particular when the model at hand
describes a complex physical problem. Variance reduction techniques
aim at reducing the overall computational cost by providing an
estimator with reduced variance compared to the Monte Carlo variance
$CN^{-1/2}$ for fixed $N$; see, e.g., \cite[Ch.~9]{Kroese_etal_11} or
\cite[Ch.~V]{Asmussen_Glynn_07} and the references therein. Improving
upon the canonical Monte Carlo rate of $-1/2$, for example by using
quasi-Monte Carlo sampling \cite{Niederreiter_1992, Dick2013} or
spectral methods \cite{Xiu_2010}, typically requires a regular (i.e.,
differentiable) dependence of the quantity of interest with respect
to the uncertain parameters. Conversely, general-purpose variance
reduction techniques that are effective when there is a non-smooth
parameter dependence aim at decreasing the constant $C$.
Variance reduction via generalized (approximate) control variates,
such as the Multi-Level and Multi-Index Monte Carlo methods, has
become a very popular approach for a wide range of applications due to
its computational efficiency; see, e.g., \cite{Cliffe_etal_11,
  Giles_2015, Haji-Ali_etal_16} for a general account and
\cite{Muller_etal_13,Krumscheid2018,Giles_2019,Pisaroni_2020} for
variants tailored to, and applications for, particular non-smooth
quantities of interest. These methods are in particular suitable when
a hierarchy of correlated models of different degree of fidelity can
be established, e.g., multiple physical grids of different
resolution. For problems such as those modeling fractured porous media
for example, it is however often prohibitive to introduce conforming grids of
different degrees of refinement.

An alternative means to obtain variance reduction that does not rely
on the concept of a hierarchy of different models (for instance when
only a single unstructured grid is available) is offered by stratified
sampling methods \cite{Asmussen_Glynn_07,Kroese_etal_11}. The idea is
straightforward: the stochastic domain is partitioned into disjoint
subsets, so-called strata, and a suitable number of samples are drawn
from each stratum. A quantity of interest can be computed as a
function of local mean values of each stratum. The number of samples
can be chosen differently from standard Monte Carlo sampling, which
offers the potential to achieve an estimator with significantly
reduced standard error. The optimal number of samples per stratum is a
function of the local variance and the size of the
stratum. Collectively, this leads to a variance reduction compared to
Monte Carlo sampling. Thus, the number of times an often expensive
numerical simulator needs to be solved can be reduced with significant
computational speedup as a result.
The existing literature is mainly concerned with either the
proportional or the optimal allocation of samples given a fixed
stratification of the stochastic domain. In this work, we introduce a
\emph{hybrid} sample allocation rule as a linear combination of
proportional and optimal allocation. Moreover, the optimal
stratification of the stochastic domain is not assumed known
\emph{apriori}.  We therefore devise an original method where we start
from a single stratum and adaptively subdivide the stochastic domain
into finer strata, while assigning new samples to asymptotically
satisfy a sampling distribution defined by some prescribed allocation
rule.  % The method is tested on a hierarchy of different problems,
% ranging from synthetic test cases to those describing simplified
% geomechanics during CO$_2$ injection into an aquifer.
%
%{\color{gray}In this work, we propose adaptivity
%in the dynamic stratification of the stochastic domain as a means to
%reduce the total number of strata created. }

Stratified sampling is usually restricted to low dimensions due to
exponential growth in the number of strata with increasing
dimension. This problem can be overcome by using, e.g., latin
hypercube sampling where only the marginal distributions are
stratified~\cite{Kroese_etal_11}.
Here we use an adaptive strategy to attenuate the dimensional
restriction.
It is noteworthy, however, that
while adaptivity enables to use stratified sampling for moderate
dimensions, %we emphasize that
its efficiency will nonetheless degenerate in higher dimensions.
In~\cite{Etore_Jourdain_10} the authors introduce an asymptotically
optimal stratified sampling estimator for a fixed stratification using
the standard empirical variance estimators to update the stratum
parameters. Adaptivity of the stratification was suggested
in~\cite{Press_Farrar_90}, where recursive adaptive stratification was
used to bisect a parallelepiped domain based on the maximum of the
squares of the differences between minimum and maximum sample values
in the tentative stratification.  An adaptive stratification algorithm
using a sequentially updated stratification matrix determining stratum
boundaries was proposed in~\cite{Etore_etal_11}. Refined stratified
sampling where single-sample strata are sequentially bisected, and the
stratification enriched by a single-sample stratum at the time was
introduced in~\cite{Shields_etal_15}. This framework was later
combined with hierarchical Latin hypercube sampling to target very
high-dimensional problems~\cite{Shields_16}.

%{\color{red}[Check the statement! What do we do differently, add info]}.

Adaptive stratification is a special
case of adaptive decomposition of the stochastic
domain. In~\cite{DeLuigi_Maire_10} the authors describe an adaptive
Quasi Monte Carlo method on a hypercube stratification with bisections
of strata maximizing an error indicator. The
work~\cite{Witteveen_Iaccarino_12} introduces a simplex tessellation
of stochastic parameter space to discretize irregular
domains. Targeting discontinuous functions, the authors
of~\cite{Jakeman_etal_13} suggested a stochastic domain decomposition
based on solution discontinuities identified via polynomial
annihilation. A Voronoi tessellation of the random space based on
random samples, together with localized surrogate modeling to capture
solution discontinuities, has been proposed in
\cite{Rushdi_etal_2017}. Also based on polynomial annihilation for
discontinuity detection is the work~\cite{Gorodetsky_Marzouk_14} using
support vector machines for building stochastic surrogate models.

The paper is structured as follows. Section~\ref{sec:basic-strat-samp}
describes the classical stratified sampling estimator. Moreover, it
introduces the novel hybrid stratified sampling
estimator. Section~\ref{sec:adapt-strat-samp} presents the new
adaptive stratification algorithm together with a variance minimizing
stratum splitting strategy as well as a probabilistic robustness
analysis due to exact stratum variances being replaced by sample
estimates. In Section~\ref{sec:practical-stratification}, we discuss
practical aspects of the proposed adaptive sampling algorithm,
including choices of stratum shapes and efficient strata splitting
strategies. Numerical results are presented in
Section~\ref{sec:num-res}, where the method is tested on a hierarchy
of different problems, ranging from synthetic test cases to those
describing simplified geomechanics during CO$_2$ injection into an
aquifer. The paper ends with a discussion and outlook in
Section~\ref{sec:concl}.

%%% Local Variables:
%%% mode: latex
%%% TeX-master: "Master"
%%% End:

\section{Stratified Sampling Estimators} 
\label{sec:basic-strat-samp}
We consider a quantity of interest $Q$ that is given as a
scalar-valued measurable function $f\colon\mathbb{R}^n\to\mathbb{R}$
of an $n$-dimensional random vector
$\bY = (Y_1,\dots, Y_n) \in \mathfrak{U}\subset\mathbb{R}^{n}$, that
is, $Q = f(\bY)$ constitutes a random variable on some probability
space $(\Omega,\mathcal{F},\mathbb{P})$. Specifically, throughout this
work we assume that $Q\in
L^2(\Omega,\mathcal{F},\mathbb{P})$. Furthermore, we suppose that the
components $Y_k$ ($k=1,\hdots,n$) are mutually independent random
variables with finite variance and known, but not necessarily
identical probability distributions. If $F_k$ denotes the cumulative
distribution function of $Y_k$, $k=1,\hdots,n$, then the random
variables $Y_k$ and $F^{-1}_{Y_k}(U_{k})$ for a uniformly distributed
random variable $U_k$ on $[0, 1]$ have the same distribution in view
of the inverse probability integral transform. One may thus write
$Y_k = F^{-1}_{Y_k}(U_{k})$ for $k=1,\hdots,n$ with $U_1,\hdots, U_n$
mutually independent and identically distributed (i.i.d.),
$U_1 \sim U[0,1]$. Without loss of generality, we may therefore assume
that the stochastic domain $\mathfrak{U}$ is the hypercube
$\mathfrak{U} = [0, 1]^n$ and that $Y_k\sim U[0,1]$ for $k=1,\dots,n$,
viewing the particular choices of $F_k$ as part of the ``model'' $f$,
which, throughout this work, is assumed to be ``complicated'' and
computationally expensive to evaluate.

\subsection{Basic definition and properties}
\label{sec:basic-strat-samp:def}
Stratified sampling is an estimation technique to approximate
$\E{Q} = \E{f(\bY)}$ with reduced variance compared to classic Monte
Carlo sampling \cite{Asmussen_Glynn_07,Kroese_etal_11}. The variance
reduction is achieved by dividing the stochastic domain $\mathfrak{U}$
into multiple disjoint regions, so-called strata, aiming at reducing
the variation in each stratum. Specifically, let $\mathcal{S}$ be a
stratification of the domain $\mathfrak{U}$, in the sense that
$\mathfrak{U} = \cup_{S\in\mathcal{S}}S $ and $S\cap T = \emptyset$
for $S,T\in\mathcal{S}$ with $S\not=T$. %That is, $\mathcal{S}$ is the
%set of considered disjoint strata that partition $\mathfrak{U}$ and
%$\lvert \mathcal{S}\rvert$ denotes the number of strata. 
For each
stratum $S\in\mathcal{S}$ we define $Q_S\colon\Omega\to\mathbb{R}$ as
the random variable with distribution of $Q = f(\bY)$
conditioned upon $\bY\in S$, that is, the distribution
$ \mathbb{P}_{Q_S}$ of $Q_S$ is given by
\begin{equation*}
  A\mapsto \mathbb{P}_{Q_S}(A) \equiv \mathbb{P}(Q_S\in A) = \frac{\mathbb{P}(Q\in A, \bY\in S)}{\mathbb{P}(\bY\in S)}= \frac{\mathbb{P}\bigl(f(\bY)\in A, \bY\in S\bigr)}{\mathbb{P}(\bY\in S)}\;,
\end{equation*}
for any $A\in\mathcal{F}$.  Let $p_S := \mathbb{P}(\bY\in S)$ denote
the measure (or ``size'') of $S$. Then
\begin{equation*}
  \E{Q} = \E{f(\bY)} = \sum_{S\in\mathcal{S}} p_S \E{Q_S} =  \sum_{S\in\mathcal{S}} p_S \E{f(\bY)|\bY\in S}
\end{equation*}
by the law of total probability. The stratified sampling estimator
$\hat{Q}$ of $\E{Q}$ is obtained by estimating the expected value
$\E{Q_S}$ in each stratum $S$ by a Monte Carlo average based on
$N_S\in\mathbb{N}$ i.i.d.\ samples of $Q_S$:
\begin{equation}
  \label{eq:ss_estimator}
  \hat{Q} := \sum_{S\in\mathcal{S}}  \frac{p_S}{N_S}\sum_{j=1}^{N_S}Q_{S}^{(j)}\;,\quad Q_{S}^{(j)}\sim\mathbb{P}_{Q_S}\quad
  1\le j\le N_S\;.
\end{equation}
Consequently, $\hat{Q}$ relies on a total of
$N := \sum_{S\in\mathcal{S}} N_S$ number of samples and constitutes an
unbiased estimator of $\E{Q}$. %, in fact, due to the independence. 
For~\eqref{eq:ss_estimator} to provide a practical estimator, the measure
$p_S$ has to be known for every stratum $S\in\mathcal{S}$ and,
furthermore, it needs to be possible to sample from the distribution
of ${Q_S}$. Throughout this work, we will assume that this is the case;
see Sect.~\ref{sec:practical-stratification} for possibilities how to
realize this in practice.  The estimator's variance is then given by
\begin{equation} \label{eq:ss_estimator:var:general}
  \var{\hat{Q}} = \sum_{S\in\mathcal{S}}\frac{p_S^2 \sigma_S^2}{N_S} = \frac{1}{N}\sum_{S\in\mathcal{S}}\frac{p_S^2 \sigma_S^2}{N_S/N},
\end{equation}
where $\sigma_S^2 := \var{Q_S} = \var{f(\bY)|\bY\in S}$.
If the numbers of samples $N_S$ are selected such that
$\lim_{N\to\infty}\frac{N}{N_S} < \infty$ for all $S\in\mathcal{S}$,
then $\lim_{N\to\infty} N\, \var{\hat{Q}} < \infty$ and the stratified
sampling estimator $\hat{Q}$ satisfies the central limit theorem, in
the sense that
\begin{equation}\label{eq:strat:basic:clt}
   \frac{\hat{Q} - \E{Q}}{\sqrt{\var{\hat{Q}}}} \underset{N\to\infty}{\Rightarrow}\mathcal{N}(0,1)\;, 
\end{equation}
see, e.g., \cite{Asmussen_Glynn_07} and the references therein. In
practice, when $N$ is sufficiently large, this asymptotic normality can
be used to report also an approximate confidence region for the point
estimate $\hat{Q}$. In that case, the natural variance estimator for
\eqref{eq:ss_estimator:var:general} is $\hat{V}$, which is obtained by
replacing the unknown variances $\sigma_S^2$ in each stratum by the
empirical variances $\hat{\sigma}_S^2$. For example, let
$z_{(1+p)/2} := \Phi^{-1}(\frac{1+p}{2})$ be the $(1+p)/2$ quantile of
the standard normal distribution, then
\begin{equation}
  \label{eq:basic:strat:clt:ci}
    p\approx \mathbb{P}\left( \left\vert \hat{Q} - \E{Q} \right\vert \le z_{\frac{1+p}{2}}{\var{\hat{Q}}}^{1/2}\right)\;,
  \end{equation}
  in view of the asymptotic normality of $\hat{Q}$.

\subsection{Proportional and optimal sample allocation rules}
\label{sec:basic-strat-samp:prop-opt-allocation}
%So far, the discussion has been independent of the considered
%sample allocation rule. That is, 
To make the stratified sampling
estimator practical, one has to select a rule for the number of
samples $N_S$ in each stratum $S\in\mathcal{S}$. There are two popular
choices, namely proportional allocation and optimal sample allocation,
that is the number of samples are chosen according to
\begin{equation}\label{eq:allo_rules}
  N^{\text{prop}}_S := p_S N\quad\text{and}\quad
  N^{\text{opt}}_S := \frac{p_S\sigma_S}{\sum_{S\in \mathcal{S}}p_S \sigma_S} N\;,
\end{equation}
respectively.  In practice these rules are, of course, only used up to
integer rounding. For the discussion in this section, we will, however,
consider the number of samples in strata as continuous variables for
simplicity.

The latter allocation rule in ~\eqref{eq:allo_rules} is optimal
in the sense that it provides the stratification estimator with the
smallest variance for a given stratification $\mathcal{S}$ with a total of $N$ samples. 
Specifically, the estimator's variance ${V}_{\text{opt}}$ using
optimal sample allocation $N_S = N_S^{\text{opt}}$ and the
estimator's variance ${V}_{\text{prop}}$ using proportional sample
allocation $N_S = N_S^{\text{prop}}$ satisfy,
\begin{equation}\label{eq:allo_vars}
  {V}_{\text{opt}}= \frac{1}{N} {\Bigl(\sum_{S\in\mathcal{S}}p_S \sigma_S\Bigr)}^2
    \le 
    \frac{1}{N} \sum_{S\in\mathcal{S}}p_S \sigma_S^2
    = {V}_{\text{prop}} \le \frac{\var{Q}}{N}\;,
\end{equation}
where the first inequality is a consequence of Jensen's inequality and
the second inequality follows from the law of total variance. Notice
that the preceding display indicates that stratification with either
sample allocation rule never increases the variance compared to
classic Monte Carlo sampling with $N$ samples.

While the optimal sample allocation rule %$N^{\text{opt}}_S$ 
provides an estimator with minimal variance, its practical implementation faces
the difficulty that the strata standard deviations $\sigma_S$ are typically unknown.
%have to
%be known for all strata $S\in\mathcal{S}$ . 
Possible remedies include
estimating the standard deviations using a pilot run, see, e.g.,
\cite{Kroese_etal_11}, or via an adaptive procedure by sequentially
allocating samples, such as the algorithm introduced
in~\cite{Etore_Jourdain_10} which asymptotically achieves the optimal
variance ${V}_{\text{opt}}$.  We reiterate that in this work we are
concerned with the case where $f$ represents a complex model that may
be computationally expensive to evaluate. Pilot runs to estimate the
variances in each stratum are therefore not affordable. %Instead, we
%will make use of a suitable adaptive scheme to estimate these
%variances on-the-fly. Specifically, the scheme introduced in this work
%combines adaptive sample allocation to estimate variances with an
%additional adaptation by splitting strata to further reduce the
%variance for heterogeneous functions. This will be described in detail
%in Sects.~\ref{sec:adapt-strat-samp} and
%\ref{sec:practical-stratification}.

\subsubsection{Probability of misestimating the estimator's variance}\label{sec:basic-strat-samp:prop-opt-allocation:concentration}
In addition to potential practical difficulties, the choice of the
sample allocation rule may also affect the accuracy of the
stratification estimator's variance estimate, which is, for example,
used in \eqref{eq:basic:strat:clt:ci} to offer approximate confidence
intervals. %Indeed, given a stratification $\mathcal{S}$, 
In practice, the natural variance estimators for the stratification estimator using
proportional and optimal sample allocation are
\begin{equation}\label{eq:basic:var:est:classic}
  \hat{V}_{\text{prop}} := \frac{1}{N} \sum_{S\in\mathcal{S}}p_S \hat{\sigma}_S^2
  \quad\text{and}\quad
  \hat{V}_{\text{opt}} := \frac{1}{N} {\Bigl(\sum_{S\in\mathcal{S}}p_S \hat{\sigma}_S\Bigr)}^2\;,
\end{equation}
respectively, where $\hat{\sigma}^2_S $ denotes the empirical variance 
\begin{equation*}
  \hat{\sigma}^2_S := \frac{1}{N_S-1}\sum_{j=1}^{N_S}{\left( Q_S^{(j)} - \overbar{Q}_S \right)}^2\;,\quad \overbar{Q}_S := \frac{1}{N_S}\sum_{j=1}^{N_S}Q_{S}^{(j)}\;,
\end{equation*}
in stratum $S$ based on $N_S $ i.i.d.~samples
$Q_{S}^{(1)}, \dots, Q_{S}^{(N_S)}$ and
$\hat{\sigma}_S \equiv \sqrt{\hat{\sigma}^2_S}$ is the corresponding
empirical standard deviation. The variance estimates in
\eqref{eq:basic:var:est:classic}, in particular
$\hat{V}_{\text{opt}}$, may have practical limitations for strata with
small sample sizes $N_S$, because $\hat{\sigma}_S$ is not an unbiased
estimator for the standard deviation $\sigma_S$. In fact,
$\hat{\sigma}_S$ is only an asymptotically unbiased estimator in
contrast to $\hat{\sigma}^2_S$, which is an unbiased estimator for the
variance $\sigma_S^2$. The Lemma below states concentration
inequalities for quantifying the deviations of both of the
stratification estimator's variance estimates from the theorized
values. Their proof is based on arguments similar to the ones used for
classic Chernoff bounds; see, e.g., \cite[Chap.~2]{Vershynin_18}.
\begin{lemma}\label{lem:basic:strat:concentration}
  For a given stratification $\mathcal{S}$, let
  $\hat{V}_{\text{prop}}$ and $\hat{V}_{\text{opt}}$ be as
  in~\eqref{eq:basic:var:est:classic}.  Suppose that for every stratum
  $S\in\mathcal{S}$ there exist finite constants $0\le M_S$, such that
  the empirical standard deviations satisfy $\hat\sigma_S \le M_S$
  almost surely. Then the variance estimator for proportional sample
  allocation satisfies
\begin{equation*}
  \mathbb{P}\Bigl(\bigl\vert \hat{V}_{\text{prop}} - {V}_{\text{prop}} \bigr\vert \ge \vartheta \Bigr)
  \le 2 \exp{\left(- \frac{2 \vartheta^2 N^2}{\sum_{S\in\mathcal{S}} {p_S}^2 {M_S}^4}\right)}\;,
\end{equation*}
for any $\vartheta > 0$. Conversely, the variance estimator for
optimal sample allocation satisfies
\begin{equation*}
  \mathbb{P}\Bigl(\bigl\vert \hat{V}_{\text{opt}} - {V}_{\text{opt}} \bigr\vert \ge \vartheta \Bigr)
  \le 2 \exp{\left(-\frac{2{\left(\lvert B\rvert - \vartheta N\right)}^2}{{\left(\sum_{S\in\mathcal{S}} p_S M_S\right)}^4}\right)}\;,
  \end{equation*}
  for any $\vartheta >\frac{\lvert B\rvert}{N}$, where
  $B := \sum_{S,T\in\mathcal{S}, S\not= T}p_S p_T \bigl(b_S b_T + b_S
  \sigma_T + b_T \sigma_S\bigr)$ denotes the estimator's bias with
  $b_S := \mathbb{E}(\hat\sigma_S) - \sigma_S$ for $S\in\mathcal{S}$.
\end{lemma}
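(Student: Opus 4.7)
The plan is to analyze the two estimators separately, exploiting in both cases (i) the mutual independence of the empirical quantities $\hat\sigma_S$ across strata (inherited from the independence of the per-stratum samples) and (ii) the almost-sure bound $\hat\sigma_S \le M_S$. Both estimates will then follow from classical Chernoff-type inequalities applied via Hoeffding's lemma on bounded random variables.

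For the proportional case I would first use that, since $\hat\sigma_S^2$ is an unbiased estimator of $\sigma_S^2$, the stratification variance estimator is itself unbiased: $\mathbb{E}\hat V_{\text{prop}} = V_{\text{prop}}$. Writing $\hat V_{\text{prop}} = \sum_{S\in\mathcal{S}} X_S$ with $X_S := p_S\hat\sigma_S^2/N$, the summands are independent and satisfy $0 \le X_S \le p_S M_S^2/N$ almost surely, so the classical Hoeffding inequality for sums of independent bounded variables yields the first claimed bound, the denominator being $\sum_S {(p_S M_S^2/N)}^2 = N^{-2}\sum_S p_S^2 M_S^4$.

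The optimal case is more delicate because $\hat V_{\text{opt}}$ is biased. I would first compute this bias by expanding $\hat A^2 := {\bigl(\sum_S p_S\hat\sigma_S\bigr)}^2 = \sum_S p_S^2 \hat\sigma_S^2 + \sum_{S\neq T} p_S p_T \hat\sigma_S\hat\sigma_T$, taking expectations, and using independence across strata together with $\mathbb{E}(\hat\sigma_S) = \sigma_S + b_S$ and $\mathbb{E}(\hat\sigma_S^2) = \sigma_S^2$; direct calculation then gives $\mathbb{E}(\hat V_{\text{opt}}) = V_{\text{opt}} + B/N$ with $B$ exactly as stated. A triangle inequality yields, for $\vartheta > |B|/N$,
$$\bigl\{|\hat V_{\text{opt}} - V_{\text{opt}}| \ge \vartheta\bigr\} \subseteq \bigl\{|\hat V_{\text{opt}} - \mathbb{E}(\hat V_{\text{opt}})| \ge \vartheta - |B|/N\bigr\}.$$
Since $\hat V_{\text{opt}} = \hat A^2/N$ with $\hat A \in [0, R]$ almost surely (where $R := \sum_S p_S M_S$), $\hat V_{\text{opt}}$ takes values in the bounded interval $[0, R^2/N]$. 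Applying Hoeffding's lemma and the standard Chernoff argument to this single bounded random variable gives $\mathbb{P}(|\hat V_{\text{opt}} - \mathbb{E}(\hat V_{\text{opt}})| \ge t) \le 2\exp\bigl(-2t^2 N^2/R^4\bigr)$; substituting $t = \vartheta - |B|/N$ produces the second claimed bound.

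The main obstacle is the optimal case, specifically the bias separation: one must correctly identify $B$ as $N$ times the bias of $\hat V_{\text{opt}}$ via the cross-term expansion, noting that $\mathbb{E}(\hat\sigma_S\hat\sigma_T)$ factorizes into $(\sigma_S + b_S)(\sigma_T + b_T)$ only for $S \neq T$, whereas the diagonal contributions $\mathbb{E}(\hat\sigma_S^2) = \sigma_S^2$ are absorbed by the unbiasedness of the sample variance. An apparently natural alternative would apply McDiarmid's inequality to $\hat A^2$ as a function of the independent $\hat\sigma_S$, but this produces a denominator proportional to $\sum_S p_S^2 M_S^2$ rather than $R^4$; the more transparent route sketched above treats $\hat V_{\text{opt}}$ itself as a single almost-surely bounded random variable, which cleanly matches the stated bound.
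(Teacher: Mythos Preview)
Your proposal is correct and follows essentially the same route as the paper. Both proofs use the Chernoff/Hoeffding machinery with the per-stratum boundedness $\hat\sigma_S\le M_S$: for $\hat V_{\text{prop}}$ the independence across strata is exploited term by term, while for $\hat V_{\text{opt}}$ the whole quantity is treated as a single bounded random variable in $[0,R^2/N]$ with $R=\sum_S p_S M_S$. The only cosmetic difference is that you peel off the bias $B/N$ first via the triangle inequality and then apply Hoeffding to the centered variable, whereas the paper carries the non-zero mean through the moment-generating function (producing an $e^{\pm tB}$ factor) and optimizes over $t$ afterwards; both organizations yield the identical constant $2\exp\bigl(-2(\vartheta N-|B|)^2/R^4\bigr)$.
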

\begin{proof}
  Let $\vartheta\ge0$. It follows from Markov's inequality that
  \begin{equation}
    \begin{aligned}
      \mathbb{P}\Bigl(\bigl\vert \hat{V} - \mathcal{V} \bigr\vert \ge \vartheta \Bigr)
      &= \mathbb{P}\bigl(N(\hat{V} - \mathcal{V} ) \ge  \vartheta N\bigr) + \mathbb{P}\bigl(N(\hat{V} - \mathcal{V} ) \le  -\vartheta N\bigr)\\
      &= \mathbb{P}\bigl( e^{t N(\hat{V} - \mathcal{V} )} \ge  e^{t\vartheta N}\bigr) + \mathbb{P}\bigl( e^{- t N(\hat{V} - \mathcal{V} )} \ge  e^{t \vartheta N}\bigr)\\
      & \le e^{- t\vartheta N} \Bigl[\E{e^{t N(\hat{V} - \mathcal{V} )}} + \E{e^{- t N(\hat{V} - \mathcal{V} )}} \Bigr]\;,
    \end{aligned}\label{eq:basic:bound:var:est}
  \end{equation}
  for any $t>0$, where $V$ and $\hat{V}$ are placeholders for any
  stratified sampling estimator's true variance and its estimated
  variance, respectively.

  We begin with the case of proportional sample allocation. That is,
  we use \eqref{eq:basic:bound:var:est} with
  $\hat{V} \equiv \hat{V}_{\text{prop}}$ and
  ${V} \equiv {V}_{\text{prop}}$. As the variance estimators
  $\hat\sigma_{S}^2$ and $\hat\sigma_{T}^2$ are independent for strata
   if $S\not= T$, it follows from Hoeffding's
  lemma and the hypotheses that
\begin{equation*}
  \E{e^{\pm t N(\hat{V} - \mathcal{V} )}} = \prod_{S\in\mathcal{S}} \E{e^{\pm t({\hat\sigma}_S^2 - {\sigma_S^2})p_S} }
  \le \prod_{S\in\mathcal{S}} e^{\frac{t^2 p_S^2}{8} M_S^4} = e^{\frac{t^2}{8} \sum_{S\in \mathcal{S}}p_S^2\,M_S^4}\;,
\end{equation*}
since $\E{\hat{\sigma}_S^2} = \sigma_S^2$ for all
$S\in\mathcal{S}$. Consequently, we obtain the bound
\begin{equation*}
  \mathbb{P}\Bigl(\bigl\vert \hat{V} - \mathcal{V} \bigr\vert \ge \vartheta \Bigr)
  \le 2 e^{\left(\frac{t^2}{8} \sum_{S\in \mathcal{S}}p_S^2\,M_S^4 - t\vartheta N\right)}\;.
\end{equation*}
for any $t>0$.  Minimizing the right-hand side over $t>0$ yields the
claim.

For the case of optimal sample allocation, we proceed similarly. In
fact, using 
$\hat{V} \equiv \hat{V}_{\text{opt}}$ and
${V} \equiv {V}_{\text{opt}}$, Hoeffding's lemma yields
\begin{equation*}
  \E{e^{\pm t N(\hat{V} - \mathcal{V} )}} = \E{e^{\pm t \sum_{S,T\in\mathcal{S}}p_S p_T(\hat\sigma_S\hat\sigma_T - \sigma_S\sigma_T)} }
  \le e^{\pm t B + \frac{t^2}{8} {\left(\sum_{S\in\mathcal{S}} p_S M_S\right)}^4}
\end{equation*}
in this case, where we have used that
\begin{equation*}
  -{\Bigl(\sum_{S\in\mathcal{S}} p_S \sigma_S\Bigr)}^2\le  N\bigl(\hat{V} - \mathcal{V}\bigr) \le
{\Bigl(\sum_{S\in\mathcal{S}} p_S M_S\Bigr)}^2 -
{\Bigl(\sum_{S\in\mathcal{S}} p_S \sigma_S\Bigr)}^2\;.
\end{equation*}
 Here, the bias term $B$ is given as
\begin{equation*}
  B := N \E{\hat{V}-V} = \sum_{S,T\in\mathcal{S}, S\not= T}p_S p_T \bigl(b_S b_T + b_S \sigma_T + b_T \sigma_S\bigr)\;,
\end{equation*}
with $b_S := \mathbb{E}(\hat\sigma_S) - \sigma_S$. %for
%$S\in\mathcal{S}$. 
Combining these bounds with
\eqref{eq:basic:bound:var:est} eventually yields
\begin{equation*}
  \mathbb{P}\Bigl(\bigl\vert \hat{V} - {V} \bigr\vert \ge \vartheta \Bigr)
  \le \min_{t>0} 2 e^{t \left(\lvert B\rvert - \vartheta N\right) + \frac{t^2}{8} {\left(\sum_{S\in\mathcal{S}} p_S M_S\right)}^4}
  = 2 \exp{\left(-\frac{2{\left(\lvert B\rvert - \vartheta N\right)}^2}{{\left(\sum_{S\in\mathcal{S}} p_S M_S\right)}^4}\right)}\;,
\end{equation*}
provided that $\vartheta > \lvert B\rvert/N$, thus completing the
proof.
\end{proof}
The Lemma above shows that when using optimal sample allocation, the
natural stratification estimator's variance estimator is affected by a
bias term for small samples sizes $N$, which originates from the
biased strata standard deviation estimators. In contrast, proportional
allocation is not affected by such a bias. Moreover, a sufficient
condition for the hypothesis that the empirical standard deviations
satisfy $\hat\sigma_S\le M_S< \infty$ almost surely in every stratum
$S\in\mathcal{S}$ is that the function $f$ is bounded on every
stratum. %Moreover, the 
The concentration inequalities stated in
Lemma~\ref{lem:basic:strat:concentration} indicate that the right-hand
sides can be decreased for a stratification $\mathcal{S}$ that isolates
highly varying regions of $f$ in strata of small measure. The adaptive
procedure introduced in this work, which will be detailed in
Sect.~\ref{sec:adapt-strat-samp}, takes advantage of this observation.
Finally, it is noteworthy that
Lemma~\ref{lem:basic:strat:concentration} may be strengthened by using
sharper bounds on the empirical standard deviations. For the purpose
of this work and to highlight the effects of biased strata standard
deviation estimators, the version presented here is sufficient,
however.

\subsection{Hybrid sample allocation rules}
\label{sec:basic-strat-samp:hyrbid-allocation}
Instead of using either proportional or optimal allocation for the
stratification procedure presented in this work, it will be useful to
consider sample allocation rules that are ``between'' both
versions. Specifically, let $\alpha\in[0,1]$ and consider the hybrid
sample allocation rule
\begin{equation}
    N_S^{\alpha} := (1-\alpha) N_S^{\text{prop}} + \alpha N_S^{\text{opt}}= p_S N \left( 1 + \alpha (\overline{\sigma}_S-1) \right) \;,
  \label{eq:allo_rules:hybrid}
\end{equation}
where
\begin{equation*}
  \overline{\sigma}_S := \frac{\sigma_S}{\sum_{T\in\mathcal{S}}p_T\sigma_T}\;,
\end{equation*}
so that that number of total samples
$N = \sum_{S\in\mathcal{S}}N_S^{\alpha}$ is as before. Notice that the
hybrid allocation rule contains, in particular, both proportional
allocation ($\alpha=0$) and optimal allocation ($\alpha = 1$) as
special cases. For the parameterized sample allocation rule
$N_S^{\alpha}$, $\alpha\in[0,1]$, we define the family of hybrid
stratified sampling estimators in the natural way by
  \begin{equation}
  \label{eq:ss_estimator:hybrid}
  \hat{Q}_{\alpha} := \sum_{S\in\mathcal{S}}  \frac{p_S}{N_{S}^{\alpha}}\sum_{j=1}^{N_{S}^{\alpha}}Q_{S}^{(j)}\;,\quad Q_{S}^{(j)}\sim\mathbb{P}_{Q_S}\;,
\end{equation}
for all $1\le j\le N_S^{\alpha}$ and $S\in\mathcal{S}$. The variance
of the estimator $ \hat{Q}_{\alpha}$ is thus given by
\begin{equation} \label{eq:ss_estimator:var:hybrid}
  V_\alpha := \var{\hat{Q}_\alpha} =  \frac{1}{N}\sum_{S\in\mathcal{S}}\frac{p_S \sigma_S^2}{1 + \alpha (\overline{\sigma}_S-1)}\;.
\end{equation}
An immediate consequence is that the variance of the hybrid stratified
sampling estimator satisfies the bound
\begin{equation} \label{eq:hybrid:var:bound:simple}
  V_\alpha \le \min\left\{\frac{V_0}{1-\alpha}, \frac{V_1}{\alpha}\right\}
\end{equation}
for any $\alpha\in [0,1]$, since
  $ 1 + \alpha (\overline{\sigma}_S-1) \ge \alpha \overline{\sigma}_S$ as well
  as $ 1 + \alpha (\overline{\sigma}_S-1) \ge 1-\alpha $. Moreover,
direct calculations show that
\begin{equation*}
  V_0 - V_\alpha = \frac{\alpha}{N}\sum_{S\in\mathcal{S}}\frac{p_S \sigma_S^2(\overline{\sigma}_S-1)}{1 + \alpha (\overline{\sigma}_S-1)}\;,\quad\text{and}\quad
  V_\alpha-V_1 = \frac{1-\alpha}{N}\sum_{S\in\mathcal{S}}\frac{p_S \sigma_S^2(\overline{\sigma}_S-1)}{\overline{\sigma}_S\left(1 + \alpha (\overline{\sigma}_S-1)\right)}\;,
\end{equation*}
so that one expects that $V_\alpha\approx V_0$ and $V_\alpha\approx V_1$
for $\alpha\approx 0$ and $\alpha\approx 1$, respectively.

For notational convenience, we introduce the vector notations
\begin{equation*}
  \boldsymbol{\sigma} := {(\sigma_S)}_{S\in\mathcal{S}}\in\mathbb{R}^{\lvert \mathcal{S}\rvert}\quad\text{and}\quad \boldsymbol{p} := {(p_S)}_{S\in\mathcal{S}}\in\mathbb{R}^{\lvert \mathcal{S}\rvert}\;,
\end{equation*}
so that
$ \overline{\sigma}_S = \frac{\sigma_S}{\langle \boldsymbol{p},
  \boldsymbol{\sigma}\rangle}$, using the standard Euclidean inner
product in $\mathbb{R}^{\lvert \mathcal{S}\rvert}$. For any
$\alpha\in[0,1]$, we define the variance constant
$C_\alpha\colon \mathbb{R}^{\lvert \mathcal{S}\rvert} \to \mathbb{R}$
as a function of $\boldsymbol{\sigma}$ via
\begin{equation*}
  C_\alpha(\boldsymbol{\sigma}) :=  \langle \boldsymbol{p}, \boldsymbol{\sigma}\rangle \sum_{S\in\mathcal{S}}\frac{p_S \sigma_S^2}{\alpha \sigma_S + (1-\alpha)\langle \boldsymbol{p}, \boldsymbol{\sigma}\rangle }\;, 
\end{equation*}
so that
the hybrid stratification estimator's variance in \eqref{eq:ss_estimator:var:hybrid}
can be written as
\begin{equation*}
  V_\alpha = \frac{C_{\alpha}(\boldsymbol{\sigma})}{N}\;.
\end{equation*}
%Recall that hybrid allocation rule with $\alpha=1$ coincides with
%optimal sample allocation that minimizes the stratification
%estimators' variance, which can also be seen from
%$C_{\alpha}(\boldsymbol{\sigma})$ directly upon noting that
%${\left. \partial_\alpha
%    C_{\alpha}(\boldsymbol{\sigma})\right\vert}_{\alpha = 1} = 0$.
Moreover, the hybrid stratified sampling estimator $\hat{Q}_{\alpha}$
satisfies the central limit theorem, cf.~\eqref{eq:strat:basic:clt}, 
\begin{equation*}
   \sqrt{N}\left(\hat{Q}_\alpha - \E{Q}\right) \underset{N\to\infty}{\Rightarrow}\mathcal{N}\left(0, C_{\alpha}(\boldsymbol{\sigma})\right)\;,
\end{equation*}
for any $\alpha\in [0,1]$, provided that
\begin{equation}\label{eq:strat:basic:clt:condition}
  \alpha \sigma_S + (1-\alpha)\langle \boldsymbol{p},
  \boldsymbol{\sigma}\rangle \not = 0\quad\forall S\in\mathcal{S}\;,
\end{equation}
which is derived from the condition
$\lim_{N\to\infty}\frac{N}{N_S} < \infty$, %$S\in\mathcal{S}$,
underlying the central limit theorem~\eqref{eq:strat:basic:clt} for
general stratified sampling estimators.  Notice that this condition
rules out the degenerate extreme case of a ``perfect''
stratification $\mathcal{S}$ with $\boldsymbol{\sigma} = 0$, but also
the case of optimal sample allocation ($\alpha = 1$) whenever
$\sigma_S = 0$ for some $S\in\mathcal{S}$.

  \subsubsection{Asymptotic distribution of the empirical variance
    constant}
  As before, to make the central limit theorem above practical, e.g.,
  to provide confidence estimates, the variance constant
  $C_\alpha(\boldsymbol{\sigma})$ needs to be estimated.  The natural
  estimator based on the empirical standard deviations
  $\hat{\boldsymbol{\sigma}}:= {(\hat{\sigma}_S)}_{S\in\mathcal{S}}$
  thus is $C_\alpha(\hat{\boldsymbol{\sigma}})$. For a finite sample
  size $N$, the hybrid stratified sampling estimator will, of course,
  satisfy a similar concentration inequality to the ones discussed in
  Sect.~\ref{sec:basic-strat-samp:prop-opt-allocation:concentration}
  for $\alpha\in\{0,1\}$. Indeed, for $\alpha>0$ there will be a bias
  due to $\mathbb{E}(\hat\sigma_S) - \sigma_S \not= 0$ for
  $S\in\mathcal{S}$, which will vanish asymptotically as $N\to\infty$.
  Complementary to the finite sample size concentration
  inequalities presented in Lemma~\ref{lem:basic:strat:concentration},
  here we discuss the asymptotic distribution of the empirical
  variance constant $C_\alpha(\hat{\boldsymbol{\sigma}})$ as
  $N\to\infty$. Specifically, the result below uses the delta method,
  see, e.g., \cite{Serfling_80,Asmussen_Glynn_07}, which relies on the
  gradient $\nabla C_{\alpha}(\boldsymbol{\sigma})$ of the variance
  constant $C_{\alpha}(\boldsymbol{\sigma})$ with respect to
  $\boldsymbol{\sigma}$. We report an explicit expression of the
  gradient $\nabla C_{\alpha}(\boldsymbol{\sigma})$ in
  Appendix~\ref{sec:gradient:varconst} for the reader's convenience.

  \begin{lemma}\label{lemma:hybrid:strat:clt}
    Suppose that $\mathcal{S}$ is such that
    $Q_S\in L^4(\Omega,\mathcal{F},\mathbb{P})$ for all
    $S\in\mathcal{S}$. Suppose further that $\alpha\in[0,1]$ is such
    that condition~\eqref{eq:strat:basic:clt:condition} holds. Then
  \begin{equation*}
     \sqrt{N}\left(C_\alpha(\hat{\boldsymbol{\sigma}}) - C_\alpha(\boldsymbol{\sigma})\right) \underset{N\to\infty}{\Rightarrow} \mathcal{N}\left(0, \varsigma_{\alpha}^2\right)\;,\quad \varsigma_{\alpha}^2 := \langle \nabla C_{\alpha}(\boldsymbol{\sigma}), \Sigma_\alpha \nabla C_{\alpha}(\boldsymbol{\sigma})\rangle\;,
   \end{equation*}
   where
   $\Sigma_\alpha := \mathrm{diag}\left(\varsigma_{S,\alpha}^2\colon
     S\in\mathcal{S} \right)$ with
   \begin{equation*}
     \varsigma_{S,\alpha}^2 := \begin{cases} \frac{\sigma_S^2\left(\kappa_S-1\right)\langle \boldsymbol{p}, \boldsymbol{\sigma}\rangle}{4 p_S\left((1-\alpha)\langle \boldsymbol{p}, \boldsymbol{\sigma}\rangle + \alpha \sigma_S\right)}
     \;,& \text{if}\quad \sigma_S \not= 0\;,\\
     0\;,& \text{if}\quad \sigma_S = 0\;,
     \end{cases} 
  \end{equation*}
  and $\kappa_S:= \E{ {\lvert Q_S - \E{Q_S}\rvert}^4} / \sigma_{S}^4$
  denoting the kurtosis of $Q_S$ in stratum $S\in\mathcal{S}$.
\end{lemma}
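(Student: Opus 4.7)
The plan is to derive the claim from the multivariate delta method applied to $C_\alpha$, whose gradient $\nabla C_\alpha(\boldsymbol{\sigma})$ is provided in Appendix~\ref{sec:gradient:varconst}. The central ingredient I would first establish is a joint central limit theorem for the empirical standard deviation vector, namely $\sqrt{N}(\hat{\boldsymbol{\sigma}} - \boldsymbol{\sigma}) \Rightarrow \mathcal{N}(0,\Sigma_\alpha)$ with the diagonal covariance $\Sigma_\alpha$ of the statement; the lemma then follows by a single application of the delta method to $C_\alpha$.

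To establish the componentwise CLT, I would first invoke the classical sample-variance CLT in each stratum $S$ with $\sigma_S>0$: since $Q_S\in L^4$, the i.i.d.\ sample variance satisfies $\sqrt{N_S^\alpha}(\hat\sigma_S^2 - \sigma_S^2) \Rightarrow \mathcal{N}(0,\sigma_S^4(\kappa_S-1))$ as $N_S^\alpha\to\infty$ (see, e.g., \cite{Serfling_80}). Condition~\eqref{eq:strat:basic:clt:condition} together with~\eqref{eq:allo_rules:hybrid} guarantees $N_S^\alpha\to\infty$ as $N\to\infty$ and yields the explicit rate ratio $N/N_S^\alpha \to \langle \boldsymbol{p},\boldsymbol{\sigma}\rangle/(p_S((1-\alpha)\langle \boldsymbol{p},\boldsymbol{\sigma}\rangle + \alpha\sigma_S))$. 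Rescaling the sample-variance CLT by $\sqrt{N/N_S^\alpha}$ and then applying the scalar delta method with $g(x)=\sqrt{x}$, which is differentiable at $\sigma_S^2>0$ with $g'(\sigma_S^2)=1/(2\sigma_S)$, yields $\sqrt{N}(\hat\sigma_S - \sigma_S) \Rightarrow \mathcal{N}(0,\varsigma_{S,\alpha}^2)$ with $\varsigma_{S,\alpha}^2$ exactly as stated. Degenerate coordinates with $\sigma_S=0$ I would handle separately: there $Q_S$ is almost surely constant, so $\hat\sigma_S = 0$ identically and this component contributes a deterministic zero to the joint limit, matching $\varsigma_{S,\alpha}^2=0$.

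Next I would promote the marginal CLTs to a joint one. By construction, samples drawn from different strata are mutually independent, so the empirical standard deviations $\hat\sigma_S$ are independent across $S\in\mathcal{S}$. Hence the joint law factorises and the componentwise CLTs assemble into $\sqrt{N}(\hat{\boldsymbol{\sigma}}-\boldsymbol{\sigma}) \Rightarrow \mathcal{N}(0,\Sigma_\alpha)$, with $\Sigma_\alpha$ the stated diagonal matrix. Condition~\eqref{eq:strat:basic:clt:condition} also ensures that $C_\alpha$ is continuously differentiable in a neighbourhood of $\boldsymbol{\sigma}$, since the denominators appearing in $C_\alpha$ and in $\nabla C_\alpha$ do not vanish there. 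The multivariate delta method then delivers $\sqrt{N}(C_\alpha(\hat{\boldsymbol{\sigma}}) - C_\alpha(\boldsymbol{\sigma})) \Rightarrow \mathcal{N}(0,\langle \nabla C_\alpha(\boldsymbol{\sigma}),\Sigma_\alpha \nabla C_\alpha(\boldsymbol{\sigma})\rangle)$, which is the claim.

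The main obstacle, as I see it, is bookkeeping rather than substance: carefully converting the intrinsic per-stratum rate $\sqrt{N_S^\alpha}$ into the common rate $\sqrt{N}$, verifying that~\eqref{eq:strat:basic:clt:condition} simultaneously guarantees $N_S^\alpha\to\infty$ for every $S$ and differentiability of $C_\alpha$ at $\boldsymbol{\sigma}$, and absorbing the degenerate coordinates cleanly when some $\sigma_S=0$ (where the delta method with $g(x)=\sqrt{x}$ is not directly applicable). A minor technical point worth flagging is that $N_S^\alpha$ in~\eqref{eq:allo_rules:hybrid} depends on the unknown $\boldsymbol{\sigma}$; consistent with the earlier convention of the section, I would treat it here as the nominal deterministic allocation rather than as an estimated quantity.
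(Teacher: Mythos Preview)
Your proposal is correct and follows essentially the same route as the paper: a per-stratum CLT for $\hat\sigma_S$ obtained via the delta method, independence across strata to lift this to a joint CLT with diagonal covariance $\Sigma_\alpha$, and then the multivariate delta method applied to $C_\alpha$. You are in fact more explicit than the paper about the rate conversion $\sqrt{N_S^\alpha}\to\sqrt{N}$, the degenerate coordinates with $\sigma_S=0$, and the differentiability of $C_\alpha$ at $\boldsymbol{\sigma}$, all of which the paper leaves implicit.
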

\begin{proof}
  The claim is a consequence of the multivariate delta method, e.g.,
  \cite[Ch.~3.3]{Serfling_80} and the fact that the empirical standard
  deviation in each stratum satisfies a central
  limit theorem~\cite[Ex.~3.6]{DasGupta_08}, which is also proved using
  the delta method. Specifically, we have
  \begin{equation*}
    \sqrt{N_S^\alpha}\left(\hat{\sigma_S} - \sigma_S\right) \Rightarrow \mathcal{N}\left(0, \frac{\sigma_S^2\left(\kappa_S-1\right)}{4}\right)
  \end{equation*}
  as $N_S^\alpha\to \infty$, which is ensured under the
  hypothesis. Observing that the different estimators $\hat{\sigma_S}$
  are independent across strata completes the proof.
\end{proof}

\subsubsection{Variance reduction for fixed $N$ and uniform Cartesian
  stratification}

The previous discussion focused on the effects of the hybrid
allocation rule and the sample size $N$ for a given stratification. In
this section, we address the effects of the hybrid allocation rule
on the variance reduction obtained with respect to the regularity of
the function $f$ that gives $Q = f(\bY)$. We recall that this work is
in particular motivated by problems for which we expect $f$ to be
non-smooth. %Below, we therefore discuss the performance of the hybrid
%stratified sampling estimator on a uniform Cartesian
%stratification for a fixed sample size $N$. 
Below, we study
the variance reduction with respect to the size of a uniform Cartesian stratification
depending on the regularity of $f$, which is an
extension of the work in \cite[Chap.~V.7]{Asmussen_Glynn_07}.  In
particular, we will focus on two special cases of $f$ that will
guide the adaptive stratification procedure in the following.

\begin{lemma}\label{lemma:hybrid:strat:large-strat}
  Consider a uniform Cartesian stratification $\mathcal{S}$ of
  $\mathfrak{U}\subset\mathbb{R}^n$.  If $f\in C^1(\mathfrak{U})$,
  then the variance of the hybrid stratification estimator
  $\hat{Q}_{\alpha}$ satisfies
    \begin{equation*}
     V_{\alpha} \le \frac{n C}{3\, N}\, {\lvert \mathcal{S} \rvert}^{-2/n}
         \min\left(\frac{1}{\alpha}, \frac{1}{1-\alpha}\right)\le \frac{2 n C}{3\, N}\, {\lvert \mathcal{S} \rvert}^{-2/n}\;,
     \end{equation*}
     for any $\alpha\in[0,1]$, where
     $C = \sup_{\bu\in\mathfrak{U}}\Vert \nabla f(\bu)\Vert_2^2 \le
     \Vert f\Vert_{C^1(\mathfrak{U})}^2$.

     If $f\colon \mathfrak{U}\to\mathbb{R}$ is a piecewise
     constant function with a jump discontinuity of size $\delta>0$
     across a %($n-1$ dimensional)
     curve $\Gamma$ in $\mathfrak{U}$,
     then the stratification estimator's variance is bounded by
     \begin{equation*}
       V_{\alpha} \le \frac{\delta^2}{4\,N}
       \begin{cases}
         \frac{\lvert \mathcal{T} \rvert}{\lvert \mathcal{S} \rvert}\;,& \alpha = 0\;,\\
         \frac{\lvert \mathcal{T}\rvert}{\lvert \mathcal{S}\rvert}\, \min\left\{\frac{1}{1-\alpha}, \frac{1}{\alpha}\, \frac{\lvert \mathcal{T}\rvert}{\lvert \mathcal{S}\rvert}\right\}\;,& 0<\alpha < 1 \;,\\
         {\left(\frac{\lvert \mathcal{T} \rvert}{\lvert \mathcal{S} \rvert}\right)}^2\;,& \alpha=1 \;,
       \end{cases}
     \end{equation*}
     where $\mathcal{T}\subset\mathcal{S}$ denotes the set of all strata
     that contain $\Gamma$.      
\end{lemma}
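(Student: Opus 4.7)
The plan is to reduce both statements to the elementary bound $V_\alpha \le \min\{V_0/(1-\alpha), V_1/\alpha\}$ already established in~\eqref{eq:hybrid:var:bound:simple}, so it suffices to upper bound the proportional variance $V_0$ and the optimal variance $V_1$ in each regularity regime. In both cases the crucial ingredient is controlling the stratum variance $\sigma_S^2$ via Popoviciu's inequality $\sigma_S^2 \le \tfrac{1}{4}(\sup_{\bu\in S}f(\bu) - \inf_{\bv\in S}f(\bv))^2$, after which the summations are immediate: the uniform Cartesian stratification has $p_S = 1/\lvert\mathcal{S}\rvert$ with each cubic stratum of side length $L = \lvert\mathcal{S}\rvert^{-1/n}$ and diameter $\sqrt{n}\,L$.

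For the $C^1$ case I would combine Popoviciu's inequality with the mean-value theorem: for $\bu,\bv \in S$, $\lvert f(\bu)-f(\bv)\rvert^2 \le C\|\bu-\bv\|_2^2 \le C n L^2$, giving the uniform bound $\sigma_S^2 \le \tfrac{n C L^2}{4}$. Substituting this into $V_0 = \tfrac{1}{N}\sum_S p_S\sigma_S^2$ and $V_1 = \tfrac{1}{N}\bigl(\sum_S p_S\sigma_S\bigr)^2$ produces matching upper bounds of order $\tfrac{nC}{N}\lvert\mathcal{S}\rvert^{-2/n}$. Plugging these into the hybrid variance inequality yields the prefactor $\min(1/\alpha,\,1/(1-\alpha))$ appearing in the first estimate; the weaker second estimate then follows from $\min(1/\alpha,\,1/(1-\alpha)) \le 2$ on $[0,1]$. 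A slightly sharper constant can be extracted via the identity $\sigma_S^2 = \tfrac{1}{2}\mathbb{E}[(f(\bY_S) - f(\bY'_S))^2]$ combined with the explicit second moment $\mathbb{E}[\|\bY_S - \bY'_S\|_2^2] = nL^2/6$ for independent uniform vectors on a cube.

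For the piecewise constant case the essential observation is that $f$ is constant on every stratum $S \notin \mathcal{T}$, so $\sigma_S = 0$ there and the sums defining $V_0$ and $V_1$ reduce to sums over $\mathcal{T}$ only. On each $S \in \mathcal{T}$, $f$ takes only two values differing by $\delta$, so Popoviciu's inequality immediately gives $\sigma_S \le \delta/2$. Direct substitution of $p_S = 1/\lvert\mathcal{S}\rvert$ then yields $V_0 \le \tfrac{\delta^2}{4N}\tfrac{\lvert\mathcal{T}\rvert}{\lvert\mathcal{S}\rvert}$ (the $\alpha=0$ row) and $V_1 \le \tfrac{\delta^2}{4N}\bigl(\lvert\mathcal{T}\rvert/\lvert\mathcal{S}\rvert\bigr)^2$ (the $\alpha=1$ row), while the intermediate row $0<\alpha<1$ is exactly $\min\{V_0/(1-\alpha),\,V_1/\alpha\}$ from~\eqref{eq:hybrid:var:bound:simple}. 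I do not anticipate any substantial obstacle: all analytic input reduces to Popoviciu's inequality, the mean-value theorem, and elementary second moments of the uniform distribution on a cube. The only care needed is bookkeeping to match the three cases of $\alpha$ with the correct proportional-versus-optimal allocation ratios and to record that the non-degeneracy condition~\eqref{eq:strat:basic:clt:condition} is harmless in the piecewise constant case because $\alpha = 1$ assigns no samples to strata with $\sigma_S = 0$.
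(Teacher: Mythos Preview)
Your proposal is correct and follows essentially the same strategy as the paper: bound the stratum variances $\sigma_S^2$, substitute into the explicit formulas for $V_0$ and $V_1$, and invoke~\eqref{eq:hybrid:var:bound:simple} for intermediate $\alpha$. The only difference is in the $C^1$ case, where you bound $\sigma_S^2$ via Popoviciu's inequality and the cube diameter (yielding the constant $nC/4$), while the paper uses $\sigma_S^2 \le \E{(f(\bY_S)-f(a))^2}$ for a corner $a$ of $S$ together with $\E{\|\bY\|_2^2}=n/3$ (yielding $nC/3$); either route establishes the stated bound, and the piecewise constant argument is identical.
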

\begin{proof}
  For a uniform Cartesian stratification, we have
  $p_S = 1/\lvert \mathcal{S}\rvert$ for all $S\in\mathcal{S}$, so
  that the stratification estimator's variance can be written as 
  \begin{equation*}
    V_{\alpha}
    = \frac{ \Vert \boldsymbol{\sigma} \Vert_1}{N\, \lvert \mathcal{S}\rvert} \sum_{S\in\mathcal{S}}\frac{\sigma_S^2}{ (1-\alpha) \Vert \boldsymbol{\sigma} \Vert_1 + \alpha\sigma_S \lvert \mathcal{S}\rvert }\;,
  \end{equation*}
  where $\Vert \boldsymbol{\sigma} \Vert_1 = \sum_{S\in\mathcal{S}}\sigma_S$.
  We proceed by bounding the local variances $\sigma_S^2$ in each
  stratum $S\in\mathcal{S}$. To do so, we distinguish the two
  regularity cases of the function $f\colon\mathfrak{U}\to\mathbb{R}$.

We begin with the case $f\in C^1(\mathfrak{U})$. It follows
from a Taylor expansion of $f$ and the Cauchy--Schwarz inequality that
\begin{equation*}
\sigma_S^2 = \var{Q_S} = \var{f(\bY)| \bY\in S} \le \sup_{\bu\in\mathfrak{U}} \Vert \nabla f(\bu) \Vert_{2}^2 \, \E{\Vert \bY \Vert_{2}^2} {\lvert \mathcal{S} \rvert}^{-2/n}    \;.
\end{equation*}
Noting that $\E{\Vert \bY \Vert_{2}^2} = n/3$, we obtain the bound
$\sigma_S^2 \le n\,C\, 3^{-1} {\lvert \mathcal{S} \rvert}^{-2/n}$ for
all $S\in\mathcal{S}$, where
$C = \sup_{\bu\in\mathfrak{U}} \Vert \nabla f(\bu) \Vert_{2}^2$.  We
thus have
$\Vert \boldsymbol{\sigma} \Vert_1^2 \le \frac{n\, C}{3}\, {\lvert
  \mathcal{S} \rvert}^{2-2/n}$.  Consequently, for $\alpha\in\{0,1\}$ the
estimator's variances satisfy
\begin{equation*}
    V_{0}  
    = \frac{ 1}{N\, \lvert \mathcal{S}\rvert} \sum_{S\in\mathcal{S}}\sigma_S^2 \le \frac{ n\, C}{\,3\,N} {\lvert \mathcal{S} \rvert}^{-2/n}\quad\text{and}\quad
    V_{1}
    = \frac{ \Vert \boldsymbol{\sigma} \Vert_1^2}{N\, {\lvert \mathcal{S}\rvert}^2} \le \frac{n\, C}{3\, N}\,{\lvert \mathcal{S} \rvert}^{-2/n}\;,
  \end{equation*}
  respectively.  For the intermediate values, $0< \alpha<1$ we use
  inequality \eqref{eq:hybrid:var:bound:simple} together with the
  bounds above to complete the first part of the proof.
    
  Next, we consider the case of $f\colon \mathfrak{U}\to\mathbb{R}$
  being a piecewise constant function. In that case, we have
  $\sigma_S^2 = 0$ for all
  $S\in\mathcal{S}\setminus\mathcal{T}$. Conversely, for
  $S\in\mathcal{T}$ the quantity $Q_S$ is a random variable taking two distinct
  values with probabilities proportional to the sizes of the stratum
  subdivision by $\Gamma$.  Consequently,
  $\sigma_S^2 \le \delta^2/4$, where $\delta>0$ is the height of the
  jump discontinuity. For $\alpha\in\{0,1\}$ the estimator's variances
  therefore satisfy 
\begin{equation*}
    V_{0}  
    = \frac{ 1}{N\, \lvert \mathcal{S}\rvert} \sum_{S\in\mathcal{S}}\sigma_S^2 \le \frac{\delta^2 {\lvert \mathcal{T}\rvert} }{4\,N\, {\lvert \mathcal{S}\rvert} }\quad\text{and}\quad
    V_{1} 
    = \frac{ \Vert \boldsymbol{\sigma} \Vert_1^2}{N\, {\lvert \mathcal{S}\rvert}^2} \le \frac{\delta^2 {\lvert \mathcal{T}\rvert}^2 }{4\,N\, {\lvert \mathcal{S}\rvert}^2} \;,
  \end{equation*}
  respectively, and from \eqref{eq:hybrid:var:bound:simple} we
  conclude that
  \begin{equation*}
    V_{\alpha}  \le \frac{\delta^2 }{4\,N} \,\frac{{\lvert \mathcal{T}\rvert}} {\lvert \mathcal{S}\rvert}
    \min\left\{\frac{1}{1-\alpha}, \frac{1}{\alpha}\, \frac{\lvert \mathcal{T}\rvert}{\lvert \mathcal{S}\rvert}\right\}
  \end{equation*}
  for any $0< \alpha<1$, which completes the proof.
\end{proof}  

The preceding lemma indicates that for a differentiable function $f$,
optimal allocation is not superior to proportional allocation, nor to
any other hybrid allocation, asymptotically as the stratification is
refined. Conversely, for piecewise constant functions, optimal
allocation is superior to other allocation rules. However, hybrid
allocation with $\alpha\in(0,1]$ sufficiently large offers
\emph{quasi-optimal} asymptotic variance reduction as the
stratification is refined, that is, as
$\gamma = \frac{\lvert \mathcal{T}\rvert}{\lvert \mathcal{S}\rvert}<1$
decreases. Indeed, if $\alpha\in(0,1]$ satisfies
$\alpha \ge \frac{\gamma_0}{1+\gamma_0}$, for example
$\alpha=\gamma_0$, then $V_{\alpha}\lesssim \gamma^2$ for all
$\gamma\le \gamma_0$.
Finally, it is also noteworthy that
Lemma~\ref{lemma:hybrid:strat:large-strat} shows that stratified
sampling estimators face the ``curse of dimensionality'', in the sense
that the variance reduction effects deteriorate drastically as the
dimension $n$ increases. To mitigate this effect to some extent, we
will use an adaptive procedure for dynamically creating the
stratification tailored to function $f$ that exhiit localized
variability, e.g., discontinuities. This is described next.

%%% Local Variables:
%%% mode: latex
%%% TeX-master: "Master"
%%% End:

\section{Adaptive Stratification}
\label{sec:adapt-strat-samp}

The optimal allocation of samples in each stratum is not known a
priori, since the stratum standard deviations are not known. Nor is
the optimal stratification of the stochastic domain known in advance,
which would minimize the stratified sampling estimator's variance;
cf.~Lemmas~\ref{lem:basic:strat:concentration} and
\ref{lemma:hybrid:strat:large-strat}. The first problem will be
alleviated similarly to the method proposed
in~\cite{Etore_Jourdain_10}, where the samples are allocated
iteratively to fixed strata where the local standard deviations are
computed to satisfy the rules~\eqref{eq:allo_rules:hybrid} in the
large sample size limit. To remedy the second problem, we propose a
method that uses local variance estimates and a greedy variance
reduction approach to determine how to adaptively split the domain
into new strata.

\subsection{Stratification algorithm}

For clarity, we provide a relatively high-level summary of the
proposed adaptive stratification procedure in
Algorithm~\ref{algorithm_ass_brief}, followed by more technical
descriptions of the individual parts in subsequent subsections. We
start from a stratification, which may be based on a priori knowledge
of the dependence of the quantity of interest $Q = f(\bY)$. If such
information does not exist, the initial stratification may consist of
a few pre-determined strata or just be a single stratum, i.e., the
entire stochastic domain.  We consecutively distribute
$N_{\text{new}}$ samples to the current stratification consisting of
$N_{\text{strata}}$ strata, chosen so that the resulting sample
distribution will be as close as possible to the hybrid allocation
rule~\eqref{eq:allo_rules:hybrid} for a given value of $\alpha$. It is
worth noting that depending on previous sample allocation and due to
integer rounding, it may not be possible to exactly satisfy the
theoretical sampling rates. Furthermore, the current stratification
may be quite different from the final stratification, and the
allocation of samples is at best optimal with respect to the current
situation. For each stratum, local properties are updated (e.g., local
means, standard deviations, and number of samples). Next, we use a
greedy approach to refine the stratification by splitting the stratum
(or strata) along some hyperplane that results in the largest variance
reduction within a finite candidate set of refined stratifications. As
an additional splitting criterion, we only consider strata that
contain a minimum number of samples for splitting to make sure that
the splitting decision is an informed one. These two steps are
repeated until an upper limit $N_{\text{max}}$ for the total number of
samples has been reached.

One may note that refinement cannot increase the variance of the
estimator, cf.~Appendix~\ref{sec:further:motivation:splitting}, but a
poor choice of splitting may require many subsequent splittings
to better adapt to the variability of the quantity of interest
$Q=f(\bY)$, and each refinement is only performed after new samples
have been distributed. Hence, for practical reasons of the proposed
method, splitting should be performed with some care.  The samples of
the split stratum are distributed to the new strata via (linear)
sorting. If the hybrid parameter $\alpha\in [0,1[$ is set to be
dynamic (see Sect.~\ref{sec:setting-alpha}), then it is updated before the
process of allocating new samples and splitting is repeated until the
limit of the computational budget is reached.
Algorithm~\ref{algorithm_ass_brief} describes the workflow in
pseudocode and contains references to the subsections where the
method's components are described in more detail.

\begin{algorithm}[H]
  \caption{Adaptive Stratified Sampling.}\label{algorithm_ass_brief}
  \begin{algorithmic}[1]
    \State Evaluate initial sample set $\{ Q^{(j)}\}_{j=1}^{N_{\textup{new}}}$ of size $N_{\textup{new}}$ using prop. allocation ($\alpha=0$). 
    \State $N\gets N_{\textup{new}}$. 
    \State Initialize coarse stratification into $N_{\textup{strata}}$ strata. (\emph{Sect.~\ref{sec:strat-geom}}) 
    \State Determine variance reduction effect from tentative splits.
    \While {$N \leq N_{\textup{max}}$}
    \State \% \emph{Refine stratification by splitting strata.}
    \State Find candidate stratum $S_{\textup{split}}$ and hyperplane to split. (\emph{Sect.~\ref{sec:split-planes}})
    \If{Splitting criteria satisfied}
    \State Split stratum $S_{\textup{split}}$ into two (equal size) strata:
    \State Redistribute samples in $S_{\textup{split}}$ to the new strata.
    \State Compute stratum parameters for the new strata, remove the parent stratum.
    \State $N_{\textup{strata}}\gets N_{\textup{strata}} + 1$.
    \EndIf
    \State \textbf{end}
    \State \% \emph{Add new samples according to current stratification.}
    \State Determine number of new samples $N_{\textup{new}}$ to allocate so that $N+N_{\text{new}}\le N_{\text{max}}$.
    \For{$S \in \mathcal{S}$} 
    \State Add $N_{\textup{new},S}$ new random samples to stratum $S$. (\emph{Sect.~\ref{sec:dist_samples}})
    \State Update stratum parameters. (\emph{Sect.~\ref{sec:upd-strat-stat}})
    \State Determine variance reduction effect from tentative splits. (\emph{Sect.~\ref{sec:greedy-var-red}})
    \EndFor
    \State \textbf{end}
    \State Update hybrid allocation parameter $\alpha$. (\emph{Sect.~\ref{sec:setting-alpha}})
    \State $N \gets N+N_{\textup{new}}$ 
    \EndWhile
    \State \textbf{end}
    \State Output: estimator $\hat{Q}_{\alpha}$ (Eq.~\eqref{eq:ss_estimator:hybrid}).
  \end{algorithmic} 
\end{algorithm}

\subsection{Variance minimizing splitting}
\label{sec:greedy-var-red}
Inaccurate estimates of the stratum standard deviations may lead to
lack of variance reduction of the standard stratification
estimator~\cite{Cochran_77}. Likewise, by symmetry of $\sigma_{S}$ and
$p_{S}$ in the case of optimal allocation, c.f.~\eqref{eq:allo_rules}
and \eqref{eq:allo_vars}, an inaccurate estimate of an unknown
measures $p_s$ may also result in poor performance. As a remedy, it is
natural to choose stratifications $\mathcal{S}$ with well-defined and
easy to compute measures $p_S$ for all $S\in\mathcal{S}$. An
additional attractive and thus desirable feature of an adaptive
stratification is that each new stratum that is created is a subset of
only one existing stratum. Otherwise, a new stratum might have regions
that have been sampled with different rates as determined by different
estimates $\hat{\sigma}_{S}$ which would require special treatment in
subsequent allocation of samples as a compensation for non-uniform
sampling.  This would be a problem for all hybrid allocation rules
with $\alpha>0$. In this work, we employ stratifications where the
stratum measures are easy to compute exactly, and new strata are
created by splitting an existing stratum into one or more subsets,
each of the same geometrical shape as the parent stratum. In
particular, in Sect.~\ref{sec:practical-stratification} we will
describe in detail stratifications based on hyperrectangles and
simplicies.

%%%%%%%%%%%%%%%

We use a greedy strategy for stratum splitting. The rationale for
greedy strategies is that while there is no way of predicting the
final stratification, the best we can do is to efficiently reduce the
variance based on the information given by the current stratification.
The basis for implementing this strategy is the
expression~\eqref{eq:ss_estimator:var:hybrid} for the variance of the
estimator for a fixed stratification with hybrid sample
allocation. 
We seek the splitting that leads to the maximum variance reduction
when one (or more) existing stratum is bisected, resulting in
a refined stratification.  Recalling the notation introduced in
Sect.~\ref{sec:basic-strat-samp:hyrbid-allocation}, the stratification
estimator's variance for a given stratification $\mathcal{S}$ and a
fixed total number of samples $N$ can be written as
  \begin{equation*}
    \frac{1}{N} \sum_{S\in\mathcal{S}}\frac{p_S \sigma_S^2}{1+\alpha\left(\bar{\sigma}_S(\mathcal{S}) - 1 \right)} =: V(\mathcal{S})\;,
  \end{equation*}
  where we have suppressed the dependence on the hybrid parameter
  $\alpha$, which is assumed to be fixed in this subsection.
  Suppose now that stratum $T\in\mathcal{S}$ is split into two
  equi-probable, disjoint strata $T^{+}$ and $T^{-}$ so that
  $T = {T^{-}}\cup {T^{+}}$ and $p_T = 2p_{T^{\pm}}$. Denote by
  $\mathcal{S}_{[T]}$ the refined stratification that is obtained by
  this splitting, that is
  $\mathcal{S}_{[T]} := (\mathcal{S}\setminus T) \cup T^{+} \cup
  T^{-}$. The variance of the corresponding stratification estimator
  based on the same total number of samples
  $ \sum_{S\in\mathcal{S}} N_S = N = \sum_{S\in\mathcal{S_{[T]}}} N_S$
  can thus be written as 
  \begin{equation*}
    \begin{aligned}
      V(\mathcal{S}_{[T]}) &= \frac{1}{N} \sum_{S\in\mathcal{S}_{[T]}}\frac{p_S \sigma_S^2}{1+\alpha\left(\bar{\sigma}_S(\mathcal{S}_{[T]}) - 1 \right)}\\
      &= \frac{1}{N} \left(\sum_{S\in\mathcal{S}\setminus T}\frac{p_S \sigma_S^2}{1+\alpha\left(\bar{\sigma}_S(\mathcal{S}_{[T]}) - 1 \right)} + \frac{p_T}{2}\sum_{S\in \left\{T^{+}, T^{-}\right\}}\frac{\sigma_S^2}{1+\alpha\left(\bar{\sigma}_S(\mathcal{S}_{[T]}) - 1 \right)} \right)\;.
    \end{aligned}
  \end{equation*}
  Consequently, the variance reduction obtained by the splitting
  $T = {T^{-}}\cup {T^{+}}$ compared to not splitting is
  \begin{equation*}
    \begin{aligned}
      N\left(V(\mathcal{S}) - V(\mathcal{S}_{[T]})\right) &= p_T\left(\frac{\sigma_T^2}{1+\alpha\left(\bar{\sigma}_T(\mathcal{S}) - 1 \right)} - \frac{1}{2}\sum_{S\in \left\{T^{+}, T^{-}\right\}}\frac{\sigma_S^2}{1+\alpha\left(\bar{\sigma}_S(\mathcal{S}_{[T]}) - 1 \right)}\right)\\
      &\qquad + \alpha \sum_{S\in\mathcal{S}\setminus T}  \frac{p_S \sigma_S^2\left(\bar{\sigma}_S(\mathcal{S}_{[T]}) - \bar{\sigma}_S(\mathcal{S}) \right)}{\left( 1+\alpha\left(\bar{\sigma}_S(\mathcal{S}) - 1 \right)\right)\left( 1+\alpha\left(\bar{\sigma}_S(\mathcal{S}_{[T]}) - 1 \right)\right)}\;.
    \end{aligned}
  \end{equation*}
  In the special case of proportional allocation ($\alpha = 0$), the expression simplifies to
  \begin{equation*}
      N\left(V(\mathcal{S}) - V(\mathcal{S}_{[T]})\right) = p_T\left(\sigma_T^2 - \frac{\sigma_{T^{-}}^2+\sigma_{T^{+}}^2}{2}\right)\;.
    \end{equation*}  
 and for optimal allocation,
     \begin{equation*}
      N\left(V(\mathcal{S}) - V(\mathcal{S}_{[T]})\right)
      = -p_T\Delta_T  \left(p_T\sigma_T + 2\sum_{S\in\mathcal{S}\setminus T}  p_S \sigma_S\right)  - p_T^2\Delta_T\frac{\sigma_{{T}^{-}} + \sigma_{{T}^{+}}}{2}\;,
  \end{equation*}
  respectively, where the notation
  $\Delta_T := \frac{\sigma_{T^{-}}+\sigma_{T^{+}}}{2} - \sigma_T$ has
  been used.
 
  The \emph{greedy} splitting strategy then entails finding the
  stratum $T\in\mathcal{S}$ that when split across a hyperplane with
  index $j$ (that corresponds to some ordering of a set of permissible splitting planes) provides the largest variance reduction, that is
\begin{equation}
\label{eq:greedy_var_split_mod}
(S,j)_{\text{split}} \in \argmax_{T,j} N\left(V(\mathcal{S}) - V(\mathcal{S}_{{[T]}_j})\right)\;,
\end{equation}
where $\mathcal{S}_{{[T]}_j}$ denotes the refined stratification when
the stratum $T$ is split across the hyperplane $j$ as determined by the shape of strata. For instance, if the stratification is defined by a Cartesian grid, the hyperplane will be perpendicular to the coordinate axis $j$.

\subsection{Probability of failure to identify a stratum that should be split}
\label{sec:adapt-strat-samp:failure-split}
%The proposed adaptive splitting procedure relies on informative
%variance estimators of $Q_{S}$, that is of $f(\bY)$ conditional upon
%$\bY\in S$, for all $S \in \mathcal{S}$. 
For any stratum
$S\in\mathcal{S}$, recall that $\hat{\sigma}_S^2$ denotes the usual
empirical variance of $Q_S$ based on the $N_S\equiv N_S^\alpha$
available samples, which is an unbiased estimator of
$\sigma_S^2\equiv \var{Q_S}$. Notice that $\sigma_S^2 = 0$ implies
$\hat{\sigma}_S^2=0$ and, conversely, that $\hat{\sigma}_S^2>0$
implies $\sigma_S^2>0$.
The proposed adaptive splitting procedure's efficiency may deteriorate if the empirical variance
of a stratum drastically underestimates the true variance of
that stratum, so that identification and refinement of a high-variability stratum fails. 
This can happen, for example, if a stratum
does not contain enough samples, so that a discontinuity is not
observed given these samples. %In view of the implications mentioned
%above, this can only occur if $\sigma_S^2>0$.
%
To quantify this ``failure'' of drastically underestimating
the local variance in a stratum $S$, suppose therefore that
$\sigma_S^2>0$. As this implies that $\hat{\sigma}_S^2\ge 0$, it
follows from the Paley--Zygmund inequality combined with the
Cauchy--Schwarz inequality that the probability of underestimating the
local variance of $Q_S$ is bounded by
\begin{equation}
  \mathbb{P}(\hat{\sigma}_S^2 \le \theta \sigma_S^2) \le 1-  \frac{(1-\theta)^2}{(1-\theta)^2 + N_{S}^{-1}\left( \kappa_S - \frac{N_{S}-3}{N_{S}-1}\right)} =
  \frac{\kappa_S - \frac{N_{S}-3}{N_{S}-1}}{N_{S} (1-\theta)^2 + \left( \kappa_S - \frac{N_{S}-3}{N_{S}-1}\right)}\;,\label{eq:failure:prob}
\end{equation}
for any $\theta\in [0,1]$, where $\kappa_S$ denotes the kurtosis of
$Q_S$. This inequality shows that the probability of underestimating
the variance of $Q_{S}$ by a factor $\theta$ is of order
$\mathcal{O}(1/N_{S})$. In fact, if the kurtosis $\kappa_S$ of $Q_S$
was known, one could use the upper bound to determine the required
sample size $N_{S}$ to guarantee that
$ \mathbb{P}(\hat{\sigma}_S^2 \le \theta \sigma_S^2)\le
p_{\text{crit}}$ for some prescribed tolerance
$p_{\text{crit}} \in(0,1)$ and $\theta\in [0,1]$.  However, the upper
bound also reveals that it is affected by the kurtosis $\kappa_S$ in
stratum $S$, in the sense that a higher kurtosis leads to a higher
probability of underestimating the variance.

% Although the kurtosis is known for some distributions, the upper bound
% in \eqref{eq:failure:prob} is not of immediate use in practice where
% the kurtosis is unknown.  In particular,
An exemplary borderline case that is insightful in the context of the
proposed stratified sampling procedure for heterogeneous functions $f$
constitutes the situation where the stratum $S$ contains a region
$R\subset S$, $S\setminus R \not=\emptyset$, on which $f$ is constant,
and all available samples are contained in $R$. That is,
$\bY^{(1)},\dots, \bY^{(N)} \in R\subset S$ so that
$Q^{(i)}_S = f(\bY^{(i)}) \equiv f_R$ for all $i=1,\dots, N_{S}$. It
follows that $\hat{\sigma}_S^2=0$, even though $\sigma_S^2>0$, since
not the entire stratum has been sampled from and the region
$S\setminus R$ has been missed. Consequently, this can lead to a poor
performance of the adaptive procedure due to missing to identify a
high variance stratum which should have been subdivided. As a matter
of fact, minimizing the risk of falsely estimating a stratum variance
to be zero is particularly important when using
Algorithm~\ref{algorithm_ass_brief} with optimal sample allocation
($\alpha=1$), since such a drastic underestimation would not be
recovered and thus remain a performance bottleneck.

Although the kurtosis is known for some distributions, the upper bound
in \eqref{eq:failure:prob} is not of immediate use in practice where
the kurtosis is unknown.
%
%However, the bound in \eqref{eq:failure:prob} is also not of any help
%in that special case if the true kurtosis $\kappa_S$ is unknown.
In fact, even estimating the kurtosis empirically based on the
available samples will be undefined in that case.  A possible remedy
is to use a rule-of-thumb, such as assuming that $Q_S$ follows a
normal distribution ($\kappa=3$), a uniform distribution on some
interval ($\kappa = 9/5$), or some other distribution with known
kurtosis in stratum $S$; see Fig.~\ref{fig:failure:prob:examples} for
examples of the upper bound on the failure probability in
\eqref{eq:failure:prob} for two different distributions.
\begin{figure}[tb!]
  \centering
  \includegraphics[width=0.4\textwidth]{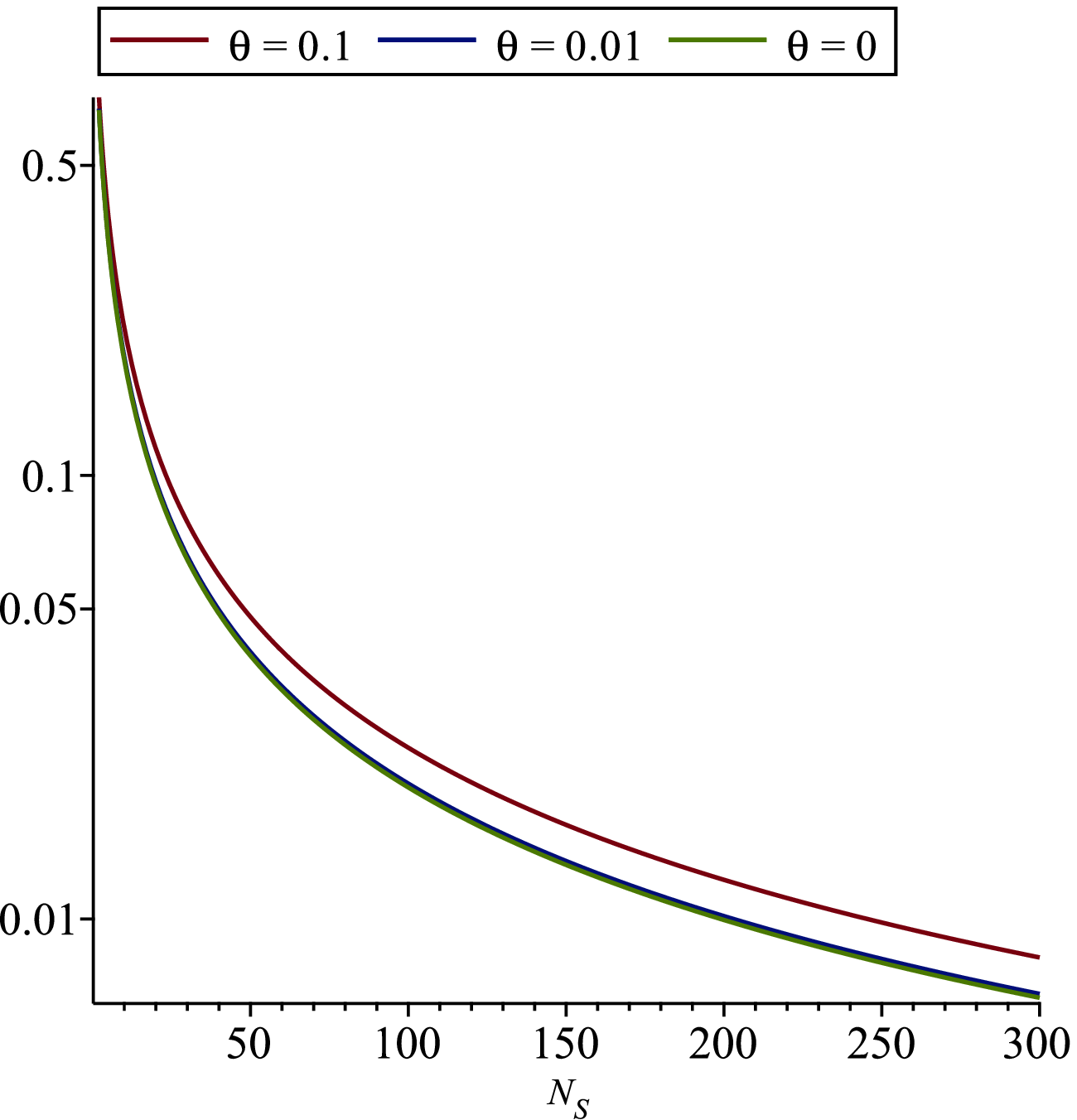}\quad
  \includegraphics[width=0.4\textwidth]{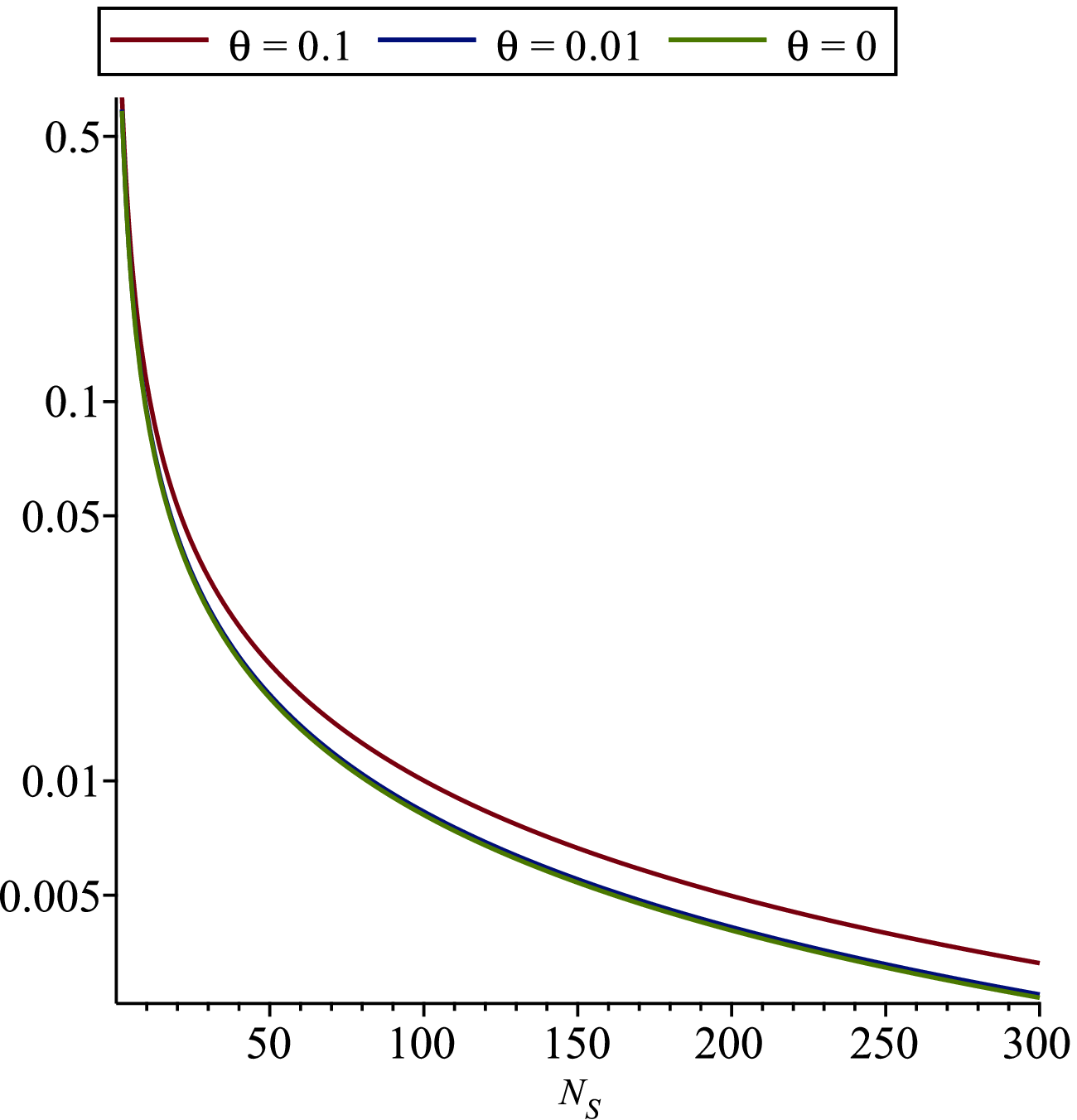}
  \caption{Upper bound of the failure probability for a Gaussian
    distribution (left) and a uniform distribution (right) based on
    $N_S$ samples.}
  \label{fig:failure:prob:examples}
\end{figure}

\subsubsection{Kernel-based estimation}
Alternatively, a sample-based approach with
moment approximations by means of a
kernel density estimator (KDE) could be useful. In fact, even in the
special case when obtaining only samples from the region $R$ where
$f$ is constant, this KDE approach is possible and yields,
interestingly, the Gaussian kurtosis rule-of-thumb approximation as we
will see below.

In general, the KDE-based moments are given by
\[
m_{k,\scriptscriptstyle{\textup{KDE}}}(Q_{S}) = \frac{1}{\delta N_S} \sum_{i=1}^{N_S}\int q^k K\left(\frac{q-Q^{(i)}_S}{\delta}\right)\,\textup{d}q\;,
\]
where $K$ is a symmetric probability density function on $\mathbb{R}$,
e.g., that of a standard Normal distribution, and $\delta>0$ denotes
the bandwidth, which controls the smoothing
\cite[Chap.~8.5]{Kroese_etal_11}. For example, this KDE-moment
approximation with a Gaussian kernel yields
\begin{equation}
\begin{aligned}
m_{1,\scriptscriptstyle{\textup{KDE}}}(Q_{S}) &= \frac{1}{N_{S}} \sum_{i=1}^{N_{S}}Q_{S}^{(i)}\;, \\ %\notag \\
m_{2,\scriptscriptstyle{\textup{KDE}}}(Q_{S}) &= \frac{1}{N_{S}} \sum_{i=1}^{N_{S}}(Q_{S}^{(i)})^2 + \delta^2, \\ %  \notag \\
m_{3,\scriptscriptstyle{\textup{KDE}}}(Q_{S}) &= \frac{1}{N_{S}}\sum_{i=1}^{N_{S}}(Q_{S}^{(i)})^{3} + 3\delta^2\frac{1}{N_{S}}\sum_{i=1}^{N_{S}}Q_{S}^{(i)}, \\%\notag \\
m_{4,\scriptscriptstyle{\textup{KDE}}}(Q_{S}) &= \frac{1}{N_{S}}\sum_{i=1}^{N_{S}}(Q_{S}^{(i)})^{4} + 6\delta^2 \frac{1}{N_{S}}\sum_{i=1}^{N_{S}}(Q_{S}^{(i)})^2  + 3\delta^4\;,%\notag
\end{aligned}
\label{eq:KDE_moments}
\end{equation}
for any $\delta>0$. That is, except for the first moment, the KDE
moments differ from the empirical moments computed directly from
samples by a term $\mathcal{O}(\delta^2)$, which indicates the
additional smoothing. Consequently, the KDE-based kurtosis
approximation $\kappa_{\scriptscriptstyle{\textup{KDE}}}(Q_{S})$ of
$\kappa_S$, that is
\begin{equation*}
  \kappa_{\scriptscriptstyle{\textup{KDE}}}(Q_{S}) := \frac{m_{4,\scriptscriptstyle{\textup{KDE}}}(Q_S) - 4m_{1,\scriptscriptstyle{\textup{KDE}}}(Q_S) m_{3,\scriptscriptstyle{\textup{KDE}}}(Q_S) + 6 m_{1,\scriptscriptstyle{\textup{KDE}}}(Q_S)^2m_{2,\scriptscriptstyle{\textup{KDE}}}(Q_S) - 3 m_{1,\scriptscriptstyle{\textup{KDE}}}(Q_S)^4}{{\left(m_{2,\scriptscriptstyle{\textup{KDE}}}(Q_S) - m_{1,\scriptscriptstyle{\textup{KDE}}}(Q_S)^2\right)}^2}\;,
\end{equation*}
is always well-defined. In fact, even in the borderline case with
$Q_{S}^{(1)} = \dots = Q_{S}^{(N_{S})} = f_R$. Indeed,
then we find
\begin{equation*}
  \begin{aligned}
  m_{1,\scriptscriptstyle{\textup{KDE}}}(Q_{S}) &= f_R\;,\quad m_{2,\scriptscriptstyle{\textup{KDE}}}(Q_S) = {f_R}^2 + \delta^2\;,\\
  m_{3,\scriptscriptstyle{\textup{KDE}}}(Q_{S}) &= {f_R}^3 + 3\delta^2 f_R\;, \quad m_{4,\scriptscriptstyle{\textup{KDE}}}(Q_S) = {f_R}^4 + 6\delta^2 {f_R}^2 + 3\delta^4\;,
\end{aligned}
\end{equation*}
which yields $\kappa_{\scriptscriptstyle{\textup{KDE}}}(Q_{S}) = 3$
for any bandwidth $\delta>0$.

%%% Local Variables:
%%% mode: latex
%%% TeX-master: "Master"
%%% End:

\section{Computational aspects}
\label{sec:practical-stratification}

In the previous section~\ref{sec:adapt-strat-samp}, we have described
the generic, conceptual aspects of the adaptive stratified sampling
procure introduced in this work. Here, we will complement the abstract
algorithmic component with implementation-specific details, including
the discussion of two concrete classes of stratifications.

\subsection{Stratification geometry}
\label{sec:strat-geom}
For a practical implementation of Algorithm~\ref{algorithm_ass_brief},
it is desirable that all strata have the same geometric shape so that
their volumes, i.e., probability measures $p_S$ for $S\in\mathcal{S}$,
can be easily and exactly computed. When a stratification is refined,
the new strata should therefore also maintain the same geometric shape
as the parent stratum. Since the sampling rates vary between the
strata, any new strata that inherits existing samples should be
properly contained within an existing stratum, i.e., it should be a
member of a partition of a larger stratum.

\subsubsection{Hyperrectangular stratification}
Adaptively splitting the stochastic domain into hyperrectangles
provides a geometry of strata that is easy to visualize (in lower
dimensions) and with properties that have clear generalizations in
multiple dimensions. In fact, both computing the volumes of
hyperrectangles and uniform sampling in hyperrectangles is
straightforward.
To maintain the number of strata at feasible levels, we
bisect a stratum into two substrata, i.e., split across a single dimension
rather than across all dimensions simultaneously. In addition, we split only one
stratum at the time. We emphasize that other splitting strategies may
be desirable and are possible within the proposed framework with very
minor modifications. Hyperrectangles are expected to be very efficient
for stratifications of problems where sharp features are nearly
parallel with the coordinate axes in the random domain.  The numerical
cost of performing the stratification refinement by greedy maximum
variance reduction described in Sect.~\ref{sec:greedy-var-red} is
dominated by sorting the existing samples according to them being
larger or smaller than a tentative stratum boundary, performed stratum
by stratum and dimension by dimension. Note that the samples do not
need to be fully sorted. Hence, the numerical cost assuming splitting
after distribution of
$N_{\text{new}} = \sum_{S \in \mathcal{S}} N_{\text{new},S}$ new
samples is proportional to
$\sum_{S \in \mathcal{S}} n N_{\text{new},S} = n N_{\text{new}}$.

\subsubsection{Simplex stratification}
A stratification based on simplices offers more flexibility compared
to a hyperrectangular stratification where all strata boundaries must
be aligned with the coordinate axes in stochastic space. Drawing
uniform samples in an arbitrary simplex can also be done
straightforwardly using standard methodologies, see, e.g.,
\cite[Ch.~3.3.2]{Kroese_etal_11}. Since the full stochastic domain of
interest $\mathfrak{U}$ is the unit hypercube in $n$ dimensions, it is
clearly possible to form a (trivial) hyperrectangular stratification
with just a single stratum. The smallest number of simplices required
to tessellate the same hypercube is $n!$. Hence, a full
$n$-dimensional simplex stratification is limited to small $n$, but we
remark that an approach where simplex stratification is only applied
to a few random dimensions may remedy this issue if the remaining
dimensions are stratified using hyperrectangles, or not at all.

To perform the greedy variance reduction stratification refinement,
each sample in the input space is transformed to barycentric
coordinates, which involves solving a linear system of size
$n\times n$. Alternatively, for $n$ relatively small compared to the
number of samples, the transformation can be carried out with
matrix-vector multiplication for each sample. The total cost is then
the inversion of two matrices of size $n \times n$ after splitting,
and $N_{\text{new}}$ matrix-vector multiplications for transformation
of each new sample to barycentric coordinates. Then, for each of the
$n(n+1)/2$ tentative splitting planes of each simplex, the
$N_{\text{new}}$ samples are labeled as belonging to one of two
possible substrata, which amounts to comparing the two barycentric
coordinate entries corresponding to the two vertices that would be
assigned to different strata in case of partitioning.

\subsubsection{Initialization of the simplex stratification}
The coarsest simplex stratification of the hypercube
$\mathfrak{U}\subset\mathbb{R}^n$, consisting of $n!$ elements, is not
unique. A suboptimal initial stratification may lead to poor variance
reduction even after subsequent dynamic splitting based on refined
estimates of the stratum variances. In particular, if $n$ is large, one
may need to perform numerous simplex splits to compensate for
a suboptimal initial stratification. As a remedy, the stratification is
initialized by choosing the stratification that minimizes the
estimator's sample variance, given an initial set of Monte Carlo
samples in the hypercube. Whereas the task of stratified sampling is
otherwise to add new samples given an existing stratification, at this
initial stage we instead seek an optimal stratification given a
small set of solution samples. To generate the set of possible simplex
tessellations, we proceed as follows. Kuhn's decomposition is used to
divide the $n$-hypercube into $n$-simplices~\cite{Kuhn_60}. An
implementation of Kuhn's decomposition, which has also been employed
in this work, is described in~\cite{Cuvelier_Scarella_18}. Assuming, as
before, that the domain is given by the unit hypercube in
$\mathbb{R}^{n}$, Kuhn's decomposition has the property that
$(0,\dots, 0)$ and $(1,\dots,1)$ are common vertices of all simplex
elements. The stratification is thus characterized by the unique edge
crossing all $n$ dimensions. We generate the set of possible simplex
stratifications by rotating the Kuhn decomposition so that we get all
combinations of diagonal edges, i.e., the $2^{n-1}$ ways to choose two
vertices $\bv_{\textup{start}}, \bv_{\textup{end}}$ with the
properties that $\bv_{\textup{start},i} = 1 - \bv_{\textup{end},i}$
for $i=1,\dots, n$. Figure~\ref{fig:init_simplex_tess} shows the
different initial simplex stratifications for $n=3$.

\begin{figure}[h]
\centering
\subfigure 
{\includegraphics[width=0.225\textwidth]{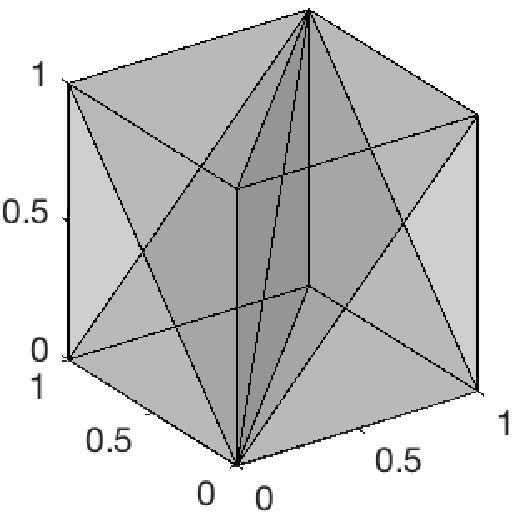}
\label{fig:init_tess_1}
}
~
\subfigure
{\includegraphics[width=0.225\textwidth]{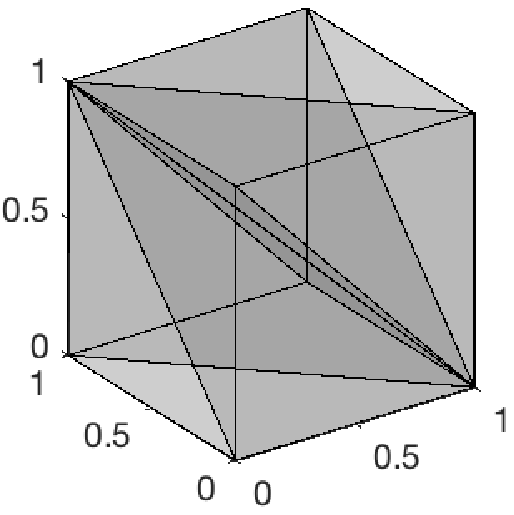}
\label{fig:init_tess_2}
}
~
\subfigure
{\includegraphics[width=0.225\textwidth]{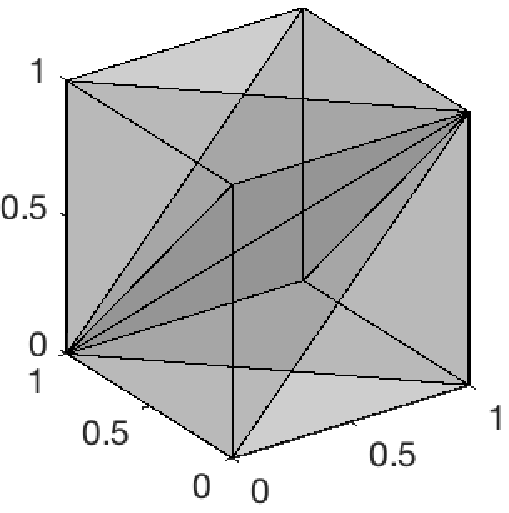}
\label{fig:init_tess_3}
}
~
\subfigure
{\includegraphics[width=0.225\textwidth]{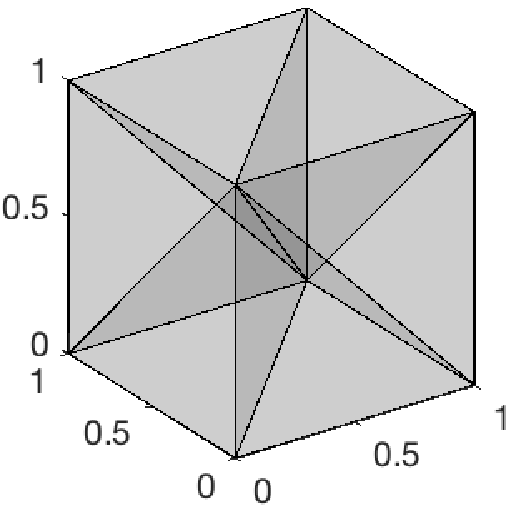}
\label{fig:init_tess_4}
}
\caption{Initial simplex stratifications of the hypercube in $n=3$ based on Kuhn's decomposition.}
\label{fig:init_simplex_tess}
\end{figure}

For each candidate stratification, the existing initial Monte Carlo
samples are distributed to the simplices through a transformation
to barycentric coordinates, and local statistics are computed. The
minimum estimator variance stratification among the $2^n$ candidates
is chosen as the initial stratification of the hypercube.

\subsection{Identification of bisection planes for splitting}
\label{sec:split-planes}

A stratum can theoretically be split along any one of an uncountable
number of planes, but finding a nearly optimal plane may require an
unnecessary amount of work, especially since we expect more
information through new samples to change the shape of the optimal
stratification. During stratification refinement, strata are instead
split to obtain the greatest variance reduction among a finite number
of possibilities. To limit the search space of possible new strata, we
consider only all possible bisections into two new members of the
stratum class (here: hyperrectangles or simplices) of all strata
and choose the minimizer as in \eqref{eq:greedy_var_split_mod}. Next
we describe how this is done in the case of hyperrectangular and
simplex stratifications, but note that other options such as splitting
more than one stratum at once could also be implemented without major
changes to the algorithm.

With a hyper-rectangular stratification, splitting must remain aligned
with the coordinate system.  Each time we split a stratum across
$n_{\textup{split}}$ dimensions, the number of strata is increased by
$2^{n_{\textup{split}}}-1$. This is not a severe limitation on the
choice of $n_{\textup{split}}$, but we do require at least one sample
in each stratum, so $n_{\textup{split}}$ should not be too large. With
$n_{\textup{split}}=1$, which was also used for the numerical
experiments discussed in Sect.~\ref{sec:num-res}, there are $n$
possible ways to split a hyperrectangle by bisection into two new
hyperrectangles, as illustrated for $n=3$ in
Fig.~\ref{fig:hyperrec_bisect}.

%%%%%%%%%%%%%%%%%%%%%
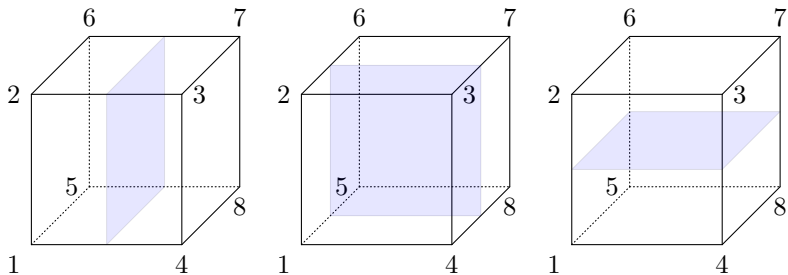
\begin{figure}[H]  
\centering  

\subfigure  
{% 

\begin{tikzpicture}[line join = round, line cap = round]

\coordinate [label=left:\footnotesize{5}] (V1) at (0,0,0);
\coordinate [label=above:\footnotesize{6}] (V2) at (0,2,0);
\coordinate [label=left:\footnotesize{2}] (V3) at (0,2,2);
\coordinate [label=below left:\footnotesize{1}] (V4) at (0,0,2);

\coordinate [label=below:\footnotesize{8}] (V5) at (2,0,0);
\coordinate [label=above:\footnotesize{7}] (V6) at (2,2,0);
\coordinate [label=right:\footnotesize{3}] (V7) at (2,2,2);
\coordinate [label=below:\footnotesize{4}] (V8) at (2,0,2);

\draw[densely dotted] (V1)--(V2);
\draw (V2)--(V3);
\draw (V3)--(V4);
\draw[densely dotted] (V4)--(V1);

\draw (V5)--(V6);
\draw (V6)--(V7);
\draw (V7)--(V8);
\draw (V8)--(V5);

\draw (V4)--(V8);
\draw (V3)--(V7);
\draw[densely dotted] (V1)--(V5);
\draw (V2)--(V6);

\draw[fill=blue,opacity=0.1] ($0.5*(V4)+0.5*(V8)$) -- ($0.5*(V3)+0.5*(V7)$) -- ($0.5*(V2)+0.5*(V6)$)-- ($0.5*(V1)+0.5*(V5)$) -- cycle;

\end{tikzpicture}

}%
%%%%
%%%%
\subfigure  
{% 

\begin{tikzpicture}[line join = round, line cap = round]

\coordinate [label=left:\footnotesize{5}] (V1) at (0,0,0);
\coordinate [label=above:\footnotesize{6}] (V2) at (0,2,0);
\coordinate [label=left:\footnotesize{2}] (V3) at (0,2,2);
\coordinate [label=below left:\footnotesize{1}] (V4) at (0,0,2);

\coordinate [label=below:\footnotesize{8}] (V5) at (2,0,0);
\coordinate [label=above:\footnotesize{7}] (V6) at (2,2,0);
\coordinate [label=right:\footnotesize{3}] (V7) at (2,2,2);
\coordinate [label=below:\footnotesize{4}] (V8) at (2,0,2);

\draw[densely dotted] (V1)--(V2);
\draw (V2)--(V3);
\draw (V3)--(V4);
\draw[densely dotted] (V4)--(V1);

\draw (V5)--(V6);
\draw (V6)--(V7);
\draw (V7)--(V8);
\draw (V8)--(V5);

\draw (V4)--(V8);
\draw (V3)--(V7);
\draw[densely dotted] (V1)--(V5);
\draw (V2)--(V6);

\draw[fill=blue,opacity=0.1] ($0.5*(V2)+0.5*(V3)$) -- ($0.5*(V6)+0.5*(V7)$) -- ($0.5*(V5)+0.5*(V8)$)-- ($0.5*(V1)+0.5*(V4)$) -- cycle;

\end{tikzpicture}

}%
%%%%
%%%%
\subfigure  
{% 

\begin{tikzpicture}[line join = round, line cap = round]

\coordinate [label=left:\footnotesize{5}] (V1) at (0,0,0);
\coordinate [label=above:\footnotesize{6}] (V2) at (0,2,0);
\coordinate [label=left:\footnotesize{2}] (V3) at (0,2,2);
\coordinate [label=below left:\footnotesize{1}] (V4) at (0,0,2);

\coordinate [label=below:\footnotesize{8}] (V5) at (2,0,0);
\coordinate [label=above:\footnotesize{7}] (V6) at (2,2,0);
\coordinate [label=right:\footnotesize{3}] (V7) at (2,2,2);
\coordinate [label=below:\footnotesize{4}] (V8) at (2,0,2);

\draw[densely dotted] (V1)--(V2);
\draw (V2)--(V3);
\draw (V3)--(V4);
\draw[densely dotted] (V4)--(V1);

\draw (V5)--(V6);
\draw (V6)--(V7);
\draw (V7)--(V8);
\draw (V8)--(V5);

\draw (V4)--(V8);
\draw (V3)--(V7);
\draw[densely dotted] (V1)--(V5);
\draw (V2)--(V6);

\draw[fill=blue,opacity=0.1] ($0.5*(V3)+0.5*(V4)$) -- ($0.5*(V1)+0.5*(V2)$) -- ($0.5*(V5)+0.5*(V6)$)-- ($0.5*(V7)+0.5*(V8)$) -- cycle;

\end{tikzpicture}

}%

\caption{\label{fig:hyperrec_bisect} Splitting of 3D hyperrectangle by bisection parallel to the coordinate axes.}
\end{figure}

Simplices can be split in multiple ways to result in a refined simplex
partitioning. Perhaps the simplest way to avoid excessive growth in
the number of strata is to split a candidate simplex in two
equal-sized simplices defined by adding the hyperplane going through
the mid-point of one edge and the remaining vertices that are not the
end points of the edge whose midpoint is used. As a simplex has $n+1$
edges, and each one corresponds to a possible bisection, there are
$(n+1)n/2$ ways to split a simplex using this method. This is
illustrated for $n=3$ in Fig.~\ref{fig:simplex_bisect}, with six
different ways to bisect the simplex, each producing two new
simplices. Clearly, a larger number of conditional variances for each
tentative bisection must be computed, compared to the case of
hyperrectangular dynamic stratification. Bisection leads to very
simple computation of the probability measure of new strata: they are
just 1/2 of the measure of their parent stratum. A straightforward
generalization of the above described bisection method is to split
along a plane where the split point of the edge is not a midpoint. In
that case, the volume can be computed by
$1/n! \left| \det(\bv_2-\bv_1, \bv_3-\bv_1, \dots, \bv_{n+1}-\bv_{1})
\right|$, where $\{ \bv_{i}\}_{i=1}^{n+1}$ is the set of vertices.

\begin{figure}[H]  
\centering  

\subfigure%[]  
{% 
\begin{tikzpicture}[line join = round, line cap = round]

\coordinate [label=above:\footnotesize{3}] (V3) at (0,{sqrt(2)},0);
\coordinate [label=left:\footnotesize{2}] (V2) at ({-.5*sqrt(3)},0,-.5);
\coordinate [label=below:\footnotesize{1}] (V1) at (0,0,1);
\coordinate [label=right:\footnotesize{0}] (V0) at ({.5*sqrt(3)},0,-.5);

\draw (V1)--(V0);
\draw (V1)--(V2);
\draw (V2)--(V3);
\draw (V1)--(V3);
\draw (V0)--(V3);
\draw[densely dotted] (V0)--(V2);

% Edge (2)-(3)
\draw[fill=blue,opacity=0.1] ($0.5*(V2)+0.5*(V3)$) -- (V0) -- (V1)-- ($0.5*(V2)+0.5*(V3)$) -- cycle;
\end{tikzpicture}
}%
\subfigure%[]  
{% 
\begin{tikzpicture}[line join = round, line cap = round]

\coordinate [label=above:\footnotesize{3}] (V3) at (0,{sqrt(2)},0);
\coordinate [label=left:\footnotesize{2}] (V2) at ({-.5*sqrt(3)},0,-.5);
\coordinate [label=below:\footnotesize{1}] (V1) at (0,0,1);
\coordinate [label=right:\footnotesize{0}] (V0) at ({.5*sqrt(3)},0,-.5);

\draw (V1)--(V0);
\draw (V1)--(V2);
\draw (V2)--(V3);
\draw (V1)--(V3);
\draw (V0)--(V3);
\draw[densely dotted] (V0)--(V2);

% Edge (1)-(3)
\draw[fill=blue,opacity=0.1] ($0.5*(V1)+0.5*(V3)$) -- (V0) -- (V2)-- ($0.5*(V1)+0.5*(V3)$) -- cycle;
\end{tikzpicture}
}%
\subfigure%[]  
{% 
\begin{tikzpicture}[line join = round, line cap = round]

\coordinate [label=above:\footnotesize{3}] (V3) at (0,{sqrt(2)},0);
\coordinate [label=left:\footnotesize{2}] (V2) at ({-.5*sqrt(3)},0,-.5);
\coordinate [label=below:\footnotesize{1}] (V1) at (0,0,1);
\coordinate [label=right:\footnotesize{0}] (V0) at ({.5*sqrt(3)},0,-.5);

\draw (V1)--(V0);
\draw (V1)--(V2);
\draw (V2)--(V3);
\draw (V1)--(V3);
\draw (V0)--(V3);
\draw[densely dotted] (V0)--(V2);

% Edge (0)-(3)
\draw[fill=blue,opacity=0.1] ($0.5*(V0)+0.5*(V3)$) -- (V1) -- (V2)-- ($0.5*(V0)+0.5*(V3)$) -- cycle;
\end{tikzpicture}
}%
\subfigure%[]  
{% 
\begin{tikzpicture}[line join = round, line cap = round]

\coordinate [label=above:\footnotesize{3}] (V3) at (0,{sqrt(2)},0);
\coordinate [label=left:\footnotesize{2}] (V2) at ({-.5*sqrt(3)},0,-.5);
\coordinate [label=below:\footnotesize{1}] (V1) at (0,0,1);
\coordinate [label=right:\footnotesize{0}] (V0) at ({.5*sqrt(3)},0,-.5);

\draw (V1)--(V0);
\draw (V1)--(V2);
\draw (V2)--(V3);
\draw (V1)--(V3);
\draw (V0)--(V3);
\draw[densely dotted] (V0)--(V2);

% Edge (1)-(2)
\draw[fill=blue,opacity=0.1] ($0.5*(V1)+0.5*(V2)$) -- (V0) -- (V3)-- ($0.5*(V1)+0.5*(V2)$) -- cycle;
\end{tikzpicture}
}%
\subfigure%[]  
{% 
\begin{tikzpicture}[line join = round, line cap = round]

\coordinate [label=above:\footnotesize{3}] (V3) at (0,{sqrt(2)},0);
\coordinate [label=left:\footnotesize{2}] (V2) at ({-.5*sqrt(3)},0,-.5);
\coordinate [label=below:\footnotesize{1}] (V1) at (0,0,1);
\coordinate [label=right:\footnotesize{0}] (V0) at ({.5*sqrt(3)},0,-.5);

\draw (V1)--(V0);
\draw (V1)--(V2);
\draw (V2)--(V3);
\draw (V1)--(V3);
\draw (V0)--(V3);
\draw[densely dotted] (V0)--(V2);

% Edge (0)-(1)
\draw[fill=blue,opacity=0.1] ($0.5*(V0)+0.5*(V1)$) -- (V2) -- (V3)-- ($0.5*(V0)+0.5*(V1)$) -- cycle;
\end{tikzpicture}
}%
\subfigure%[]  
{% 
\begin{tikzpicture}[line join = round, line cap = round]

\coordinate [label=above:\footnotesize{3}] (V3) at (0,{sqrt(2)},0);
\coordinate [label=left:\footnotesize{2}] (V2) at ({-.5*sqrt(3)},0,-.5);
\coordinate [label=below:\footnotesize{1}] (V1) at (0,0,1);
\coordinate [label=right:\footnotesize{0}] (V0) at ({.5*sqrt(3)},0,-.5);

\draw (V1)--(V0);
\draw (V1)--(V2);
\draw (V2)--(V3);
\draw (V1)--(V3);
\draw (V0)--(V3);
\draw[densely dotted] (V0)--(V2);

% Edge (0)-(2)
\draw[fill=blue,opacity=0.1] ($0.5*(V0)+0.5*(V2)$) -- (V1) -- (V3)-- ($0.5*(V0)+0.5*(V2)$) -- cycle;
\end{tikzpicture}
}%
\caption{\label{fig:simplex_bisect} Bi-section of 3D simplex along the hyperplane that intersects the midpoint of an edge and all vertices that do not belong to that edge.}
\end{figure}
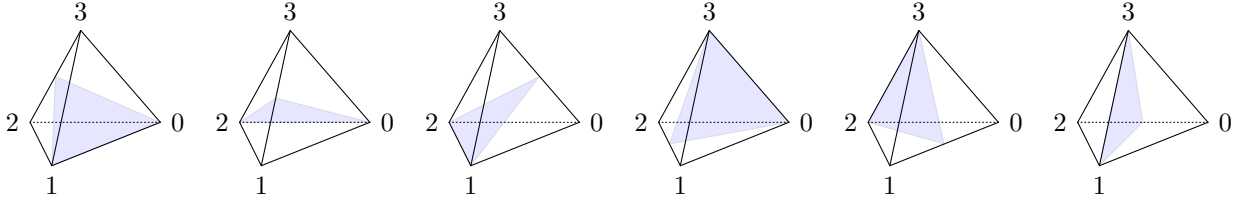

%%%%%%%%%%%%%%%%%%%%%%%%%%%%%

\subsection{Sequential allocation of samples}
\label{sec:dist_samples}
The stratification is dynamically updated by bisection (or other means
of splitting) of existing strata. We refer to a cycle of bisection and
subsequent sampling as an \emph{iteration} of
Algorithm~\ref{algorithm_ass_brief}.  At every iteration and for a
given value $\alpha$, $N_{\text{new}} = \sum_{S} N_{\text{new},S}$
samples are newly distributed and added to the current stratification
for an asymptotic (i.e., for $N_{\max}$ sufficiently large) sample
allocation according to the rates
\begin{equation*}
  q_{S}^{\alpha} := (1-\alpha)p_S + \alpha \bar{\sigma}_S p_S\;.
\end{equation*}
As the hybrid parameter $\alpha\in [0,1]$ is fixed, we will suppress
the explicit dependence and simply write $q_S\equiv
q_{S}^{\alpha}$. Adding these $N_{\text{new}}$ samples at the current
iteration to the current stratification then amounts to adding
$N_{\text{new}, S}$ new samples to stratum $S$ satisfying
\[
N_{\text{new},S} = \max(0, \min( \lceil  ( N_{\text{total}} + N_{\text{new}}) q_{S} - N_{S} \rceil, N_{\text{new}})),
\]
i.e., striving to fulfill
$N_{S}= q_{S} (N_{\text{total}}+ N_{\text{new}})$ as in
\eqref{eq:allo_rules:hybrid}, assuming a fixed stratification.
Here $\lceil x \rceil$ denotes the smallest integer greater than or
equal to $x$.
Even for hybrid sampling rules where a fraction of samples is
proportionally allocated (i.e., $\alpha<1$), some strata may not be
assigned new samples in every iteration. Eventually, all strata will
receive new samples, but for strata where variability has been
underestimated this can require several iterations and may not happen
before the total sampling budget of $N_{\text{max}}$ samples has been
reached. To avoid that a stratum where the standard deviation has
mistakenly been assigned to be zero does not get updated, we may
reserve at least one sample for every stratum, independent of the
sampling rates. Hence, we may alternatively use the allocation rule
\[
N_{\text{new},S} = 1+\max(0, \min(\lceil ( N_{\text{total}} + N_{\text{new}} -N_{\text{strata}}) q_{S}-N_{S} \rceil, N_{\text{new}}-N_{\text{strata}})).
\]
%{\color{purple} Juts curious, shouldn't that be 
%\[
%N_{\text{new},S} = \max(1, \min(\lceil ( N_{\text{total}} + N_{\text{new}} -N_{\text{strata}}) q_{S}-N_{S} \rceil, N_{\text{new}}-N_{\text{strata}}))
%\]
%If many samples being added, it will not make a real difference though.
%}
In case at least one stratum has been oversampled, e.g., as a result of
a stratum being created from a parent stratum with larger variance,
both sampling rules require a total number of samples exceeding
$N_{\text{new}}$. In these cases, we add samples according to the
allocation rule as long as the total sampling budget is not exceeded.

Since the stratification is dynamic and changes with (almost) every
iteration, there is a trade-off between choosing $N_{\text{new}}$ large
enough to approximately satisfy the rates $q_S$, and small enough to
save samples for further refinement of the stratification. A
reasonable compromise may be to always add a constant $c$ times the
current number of strata so that every stratum gets \emph{on average}
$c$ new samples in each iteration,
 \begin{equation}
 \label{eq:dist_samples}
N_{\text{new}} = c N_{\text{strata}}. 
 \end{equation}
 We emphasize that the distribution of samples is not restricted to
 the choice~\eqref{eq:dist_samples}, and other choices may
 significantly impact the performance of the algorithm. At the same
 time, the results in Sect.~\ref{sec:adapt-strat-samp:failure-split}
 offer a convenient probabilistic interpretation of the value for $c$
 in \eqref{eq:dist_samples} as the (stratification averaged) likelihood
 of drastically underestimating a local stratum variance.

 \subsubsection{Updating stratum statistics}
 \label{sec:upd-strat-stat}
 Updating the statistics of each stratum in every iteration may become
 quite costly for large sample sizes, in particular if only a small
 number of samples are added at every iteration so that many
 iterations are performed. Instead of recomputing the sample means and
 standard deviations from scratch using the standard formulae, we use
 the following update formulae
\begin{align}
\hat{\mu}_{S} &= \frac{N_{\text{old}, S} \hat{\mu}_{\text{old}, S} + N_{\text{new},S} \hat{\mu}_{\text{new},S}}{N_{\text{old},S}+N_{\text{new},S}}, \\            
\hat{\sigma}^2_{S} &= \frac{(N_{\text{old},S}-1) \hat{\sigma}_{\text{old},S}^2 + (N_{\text{new},S}-1) \hat{\sigma}_{\text{new},S}^2 + \frac{N_{\text{old},S} N_{\text{new},S}}{N_{\text{old},S}+N_{\text{new},S}} (\hat{\mu}_{\text{old},S}-\hat{\mu}_{\text{new},S})^2}{N_{\text{old},S}+N_{\text{new},S}-1}\;,   
\end{align}
which follows the ideas of Welford's online algorithm
\cite{Welford_1962}. Here, the subscripts 'old' and 'new' denote two
independent sample sets of the same quantity. These expressions are
not only used for the stratum means and standard deviations, but also
to update the corresponding quantities for potential splits, i.e.,
possible new strata.

\subsection{Setting the hybrid sampling parameter $\alpha$}
\label{sec:setting-alpha}

The choice of the hybrid parameter $\alpha\in[0,1]$, which controls
the sampling allocation rule, is affected by two opposing trends:
robustness and variance reduction. In
Sect.~\ref{sec:adapt-strat-samp:failure-split} we have discussed that
optimal allocation with $\alpha=1$ can lead to poor performance of the
adaptive procedure described in Algorithm~\ref{algorithm_ass_brief}
whenever the variances in high-variance strata are drastically
underestimated.  This is because the iterative procedure in
Algorithm~\ref{algorithm_ass_brief} with $\alpha=1$ may never (e.g.,
in the case of zero variance estimates) add new samples to these
strata so that the underestimation cannot be recovered, not even
asymptotically. In that sense, the choice $\alpha = 1$ lacks
robustness. In contrast, proportional allocation (i.e., $\alpha=0$) is
the most robust in that regard as it does not rely on estimates of the
strata standard deviations; cf.~also
Lemma~\ref{lem:basic:strat:concentration}. On the other hand, the
results in Sect.~\ref{sec:basic-strat-samp:hyrbid-allocation} indicate
that optimal allocation with $\alpha=1$ does not only offer minimal
variance for a fixed stratification, but it is also optimal with
respect to the size of a stratification when the function $f$ is
discontinuous. This is in particular relevant for the class of
functions of interest in this work. However,
Lemma~\ref{lemma:hybrid:strat:large-strat} also reveals that suitable
values of $\alpha$ smaller than $1$ may still offer quasi-optimal
variance reduction for discontinuous functions. The summary of these
two opposing effects is therefore that one should select
$\alpha\in [0,1]$ as small as possible yet large enough to provide
quasi-optimal variance reduction.
The numerical examples presented in Sect.~\ref{sec:num-res} contains both results for 
fixed values of $\alpha$, and dynamic choice of $\alpha$, to be described next.
%
%One natural possibility therefore is to set $\alpha$ to a fixed
%value. For example, the numerical examples presented in
%Sect.~\ref{sec:num-res} indicate that the choice $\alpha=0.9$ offers
%good performance.

\subsubsection{Dynamic choice of the hybrid parameter}
An alternative is to chose $\alpha\in [0,1]$ dynamically across
iterations of the adaptive stratification algorithm described in
Algorithm~\ref{algorithm_ass_brief}. The procedure that we describe in
the following is based on the intuition that at the beginning of the
iterative procedure we may have unreliable estimates of the strata
standard deviations, so that $\alpha$ small is advisable. Eventually,
as the procedure adapts the stratification and increasingly more
samples are being generated, one expects that more reliable estimates
of the standard deviations are available for values of $\alpha<1$, so
that $\alpha$ large is desired for quasi-optimal variance reduction.

To formalize this intuition, let $k\in\mathbb{N}_0$ denote the
iteration counter of Algorithm~\ref{algorithm_ass_brief} and suppose
that the hybrid parameter is initialized with $\alpha_0$ for the first
iteration (e.g., $\alpha_0 = 0$ is natural). Let $\alpha_k$ denote the
value of the hybrid parameter used during the $k$-th iteration. At the
end of that iteration (line 21 in
Algorithm~\ref{algorithm_ass_brief}), the value of the hybrid
parameter for the iteration $k+1$ should then be selected in a way
that provides a good compromise between variance reduction and
robustness.
Here we describe an update based on the stratified sampling estimators
variance $V_\alpha = C_\alpha(\boldsymbol{\sigma})/N$; cf.\
Sect.~\ref{sec:basic-strat-samp:hyrbid-allocation}. If the exact
strata standard deviations $\boldsymbol{\sigma}$ were known, then the
optimal variance reduction would be achieved for $\alpha=1$. However,
the standard deviations $\boldsymbol{\sigma}$ are estimated based on
the samples that have been generated during the iterations so
far. Hence, only the empirical variance constant
$C_\alpha(\hat{\boldsymbol{\sigma}})$ is available. Naively selecting
the hybrid parameter for iteration $k+1$ as
$\alpha_{k+1} = \argmin_{\alpha\in[0,1]}
C_\alpha(\hat{\boldsymbol{\sigma}})$, where
$\hat{\boldsymbol{\sigma}}$ contains the estimated strata standard
deviations using the samples available at iteration $k$, may result in
spurious effects due to the lack of robustness, as the empirical
variance constant introduces an error. In fact, in view of the
asymptotic distribution characterized in
Lemma~\ref{lemma:hybrid:strat:clt}, the empirical variance constant
$C_\alpha(\hat{\boldsymbol{\sigma}})$ is random, and it will fluctuate
around $C_\alpha(\boldsymbol{\sigma})$, where those fluctuations will
be asymptotically normally distributed. The size of the fluctuations
can then be approximately quantified via the limit distribution's
variance, which we write as
  $\varsigma_\alpha^2(\boldsymbol{\sigma},\boldsymbol{\kappa}) = \langle \nabla
  C_{\alpha}(\boldsymbol{\sigma}), \Sigma_\alpha(\boldsymbol{\sigma},\boldsymbol{\kappa}) \nabla
  C_{\alpha}(\boldsymbol{\sigma})\rangle$
to emphasize the dependence on both the strata standard deviations
$\boldsymbol{\sigma}$ and the kurtosis $\boldsymbol{\kappa}$ for
practical considerations. Therefore, instead of selecting
$\alpha\in [0,1]$ by minimizing
$C_{\alpha}(\hat{\boldsymbol{\sigma}})$ at iteration $k$ we
incorporate these fluctuations and consider the minimization of the
empirical function $J_{k}\colon [0,1]\to\mathbb{R}$
\begin{equation*}
  J_{k}(\alpha) 
  := C_{\alpha}(\hat{\boldsymbol{\sigma}}) + \frac{\varsigma_{\alpha}(\hat{\boldsymbol{\sigma}},\hat{\boldsymbol{\kappa}} )}{\sqrt{N}} \;,
\end{equation*}
where the index $k$ indicates that $\hat{\boldsymbol{\sigma}}$ and
$\hat{\boldsymbol{\kappa}}$ contain the estimated strata standard
deviations and kurtosis, receptively, using the $N$ samples available
at iteration $k$. That is, for every iteration $k$, the function
$J_{k}$ corresponds to the upper end of the $68.27~\%$ approximate
(asymptotic) confidence interval for the variance constant
$C_{\alpha}(\boldsymbol{\sigma})$. Moreover, it satisfies
$\lim_{k\to\infty} J_k(\alpha) = C_{\alpha}(\boldsymbol{\sigma})$
almost surely, since $k\to\infty$ implies $N\to\infty$. One therefore
expects that the update by
$\alpha_{k+1} = \argmin_{\alpha\in [0,1]} J_{k}(\alpha) $ satisfies
$\alpha_k\to 1$ asymptotically as $k\to\infty$.
To further robustify the iterative selection of $\alpha$, one could, 
for example, choose $\alpha_{k+1}$ as the smallest value in $[0,1]$
that already provides $\tau\cdot 100\,\%$, $0<\tau \le 1$ of the
optimal (i.e., minimal) upper confidence band for the variance
constant.  The complete selection procedure outlined above is
summarized in Algorithm~\ref{algo:selection:alpha}.
\begin{algorithm}[H]
  \caption{Updating the hybrid sampling parameter $\alpha$.}
  \label{algo:selection:alpha}
  \begin{algorithmic}[1]
    \State Let $\alpha_k$ be the hybrid parameter used for the sample
    allocation at the $k$-th iteration of the adaptive stratification
    procedure.

    \State Estimate $\hat{\boldsymbol{\sigma}}$ and
    $\hat{\boldsymbol{\kappa}}$ based on the available
    $N^{\alpha_k}_S$ samples in each stratum $S$ of the current
    stratification $\mathcal{S}$ at iteration $k$.
    
    \State Determine $J_k^\ast = \min_{\alpha\in [0,1]} J_k(\alpha)$

    \State Set
    $\alpha_{k+1} = \min\bigl\{\alpha\in[0,1]\colon J_k(\alpha)
    -J_k^\ast \le (1-\tau)J_k^\ast \bigr\}$, or $\alpha_{k+1} = 0$ if
    the set is empty.
  \end{algorithmic}
\end{algorithm}
Notice that the choice $\tau = 0$ coincides with the case of no
additional robustification.  Moreover, step 2 of
Algorithm~\ref{algo:selection:alpha} requires estimating both
$\hat{\boldsymbol{\sigma}}$ and $\hat{\boldsymbol{\kappa}}$ based on
the available samples at iteration $k$. To minimize the risk of
drastically misestimating the strata variances (and thus the
kurtosis), cf.\ inequality~\eqref{eq:failure:prob}, we propose to use
KDE-based techniques moment estimates discussed in
Sect.~\ref{sec:adapt-strat-samp:failure-split}, which introduce
additional smoothing for strata with small observed variability.

%%% Local Variables:
%%% mode: latex
%%% TeX-master: "Master"
%%% End:

\section{Numerical results}
\label{sec:num-res}

As discussed in Sect.~\ref{sec:basic-strat-samp}, for a given
stratification (and if exact sample allocation rules are available)
stratified sampling yields an unbiased estimator of
$\E{Q} = \E{f(\bY)}$, whose variance is reduced compared to that of a
classic Monte Carlo estimator; cf.~\eqref{eq:allo_vars}. In fact, the
ratio of the Monte Carlo estimator's variance
$\var{\hat{Q}_{\textup{MC}}} = \var{Q}/N$ and a hybrid stratified
sampling estimator's variance $\var{\hat{Q}_{\alpha}}$ gives rise to
as a measure of the \textit{speedup} in view of the central limit
theorem \eqref{eq:strat:basic:clt}. Indeed, the speedup
\begin{equation}\label{eq:speed-up}
\textup{speedup} \equiv \frac{\var{\hat{Q}_{\textup{MC}}}}{\var{\hat{Q}_{\alpha}}} = \frac{\var{Q}}{C_{\alpha}(\boldsymbol{\sigma})}\;,
\end{equation}
can be seen as the factor of how many more simulations (i.e.,
evaluations of the model $f$) are needed to achieve a certain mean
squared error goal when using a Monte Carlo estimator compared to a
hybrid stratified sampling estimator.

The adaptive stratification procedure introduced in this work aims at
minimizing the stratification estimator's variance by consecutively
refining high variance strata. That is, the procedure is designed to
maximize the speedup even in the presence of heterogeneous features of
$f$. A consequence of this dynamic sampling-based adaptation is that
it is no longer guaranteed that the estimator at the final iteration
will be a statistically unbiased estimator of $\E{Q}$ for a finite
sample size $N_{\text{max}}$, due to the dependencies on the
previously sampled variables and strata refinements. However, taking
advantage of the hybrid sample allocation and following the directives
introduced in Sect.~\ref{sec:adapt-strat-samp}, the adaptive
stratified sampling estimator is still asymptotically unbiased for any
$\alpha\in[0,1)$. Moreover, numerical comparisons (not shown here) of
the empirical adaptive stratified sampling estimators' bias with the
empirical Monte Carlo bias for the test cases considered in the
following indicated that both terms are of similar size.

Recall that in this work, we assume that the total cost of obtaining
$N_{\text{max}}$ samples of the function of interest is vastly higher
than the cost of the dynamic stratification and allocation of
samples. In what follows, the performance of the algorithm presented
in previous sections is evaluated on a range of different test
cases. We note that we do not present trivial cases where e.g., a
discontinuity can be exactly tessellated by the kind of stratification
chosen here (hyperrectangles or simplices), but report that for such
cases the speedups compared to standard Monte Carlo become arbitrarily
large.  A discontinuity defined by $1/2^n$ of a hypersphere is thus a
nontrivial and more insightful test case where the number of
stochastic dimensions $n$ can be varied to investigate how the
performance depends on the dimensionality of the problem
(Sect.~\ref{sec:num-res:hyper-sphere}). The variance is strictly
localized to strata that contain the boundary of the hypersphere, and
zero elsewhere. That is, the challenging aspect of these test cases
for the adaptive stratification procedure is the identification (and
adaptive isolation) of a discontinuity.
Next, an idealized model for the critical pressure in a faulted
reservoir based on analytical physical expressions is investigated
(Sect.~\ref{sec:num-res:fault-stress}). The model exhibits both
discontinuities and regions of sharp but continuous
variation. Finally, we perform numerical tests on the Sod test case
from fluid dynamics, and a vertical
equilibrium porous medium model describing subsurface CO$_2$ storage
(Sects.~\ref{sec:num-res:sod} and \ref{sec:num-res:CO2}). All these
cases exhibit discontinuous dependence of the quantity of interest
(QoI) $Q=f(\bY)$ with respect to the stochastic parameters $\bY$, as
is depicted in Fig.~\ref{fig:surfaces_test_problems} and described in
more details in subsequent subsections.
\begin{figure}[h]
\centering
\subfigure[Fault stress problem, Sect.~\ref{sec:num-res:fault-stress}.]
{\includegraphics[width=0.31\textwidth]{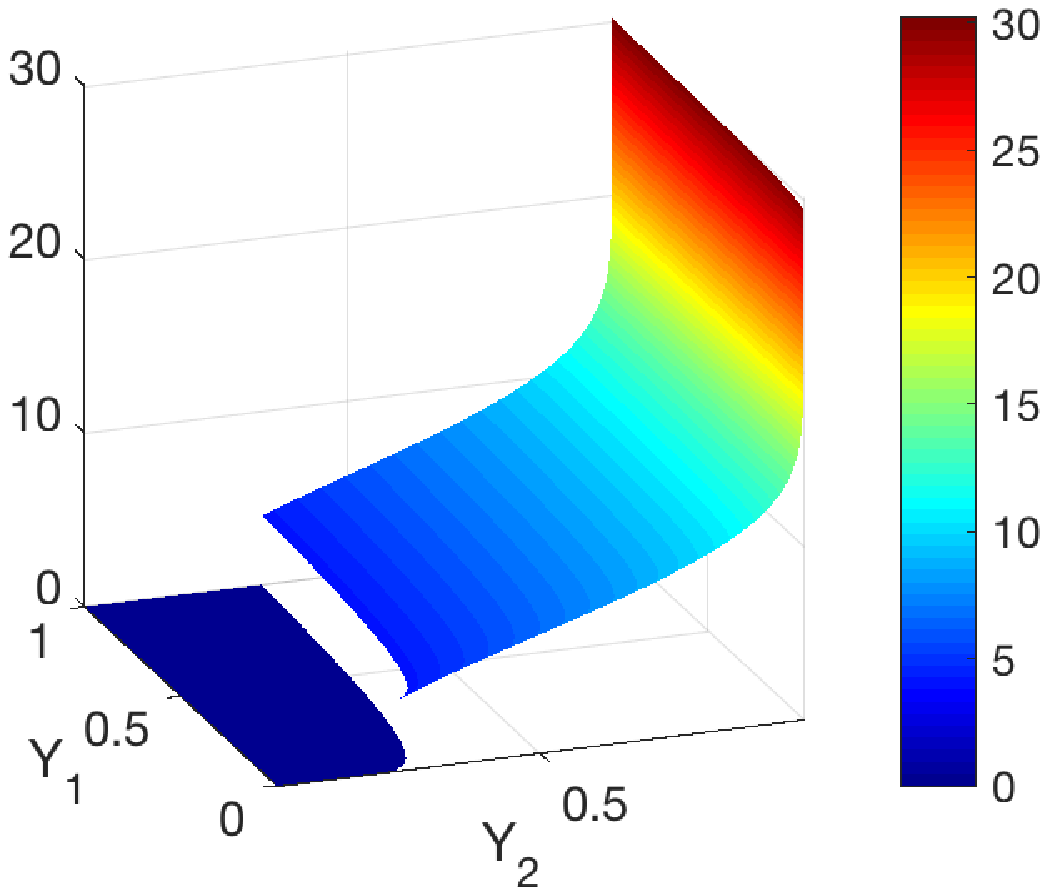}
}
~
\subfigure[Sod test case, Sect.~\ref{sec:num-res:sod}.]
{\includegraphics[width=0.31\textwidth]{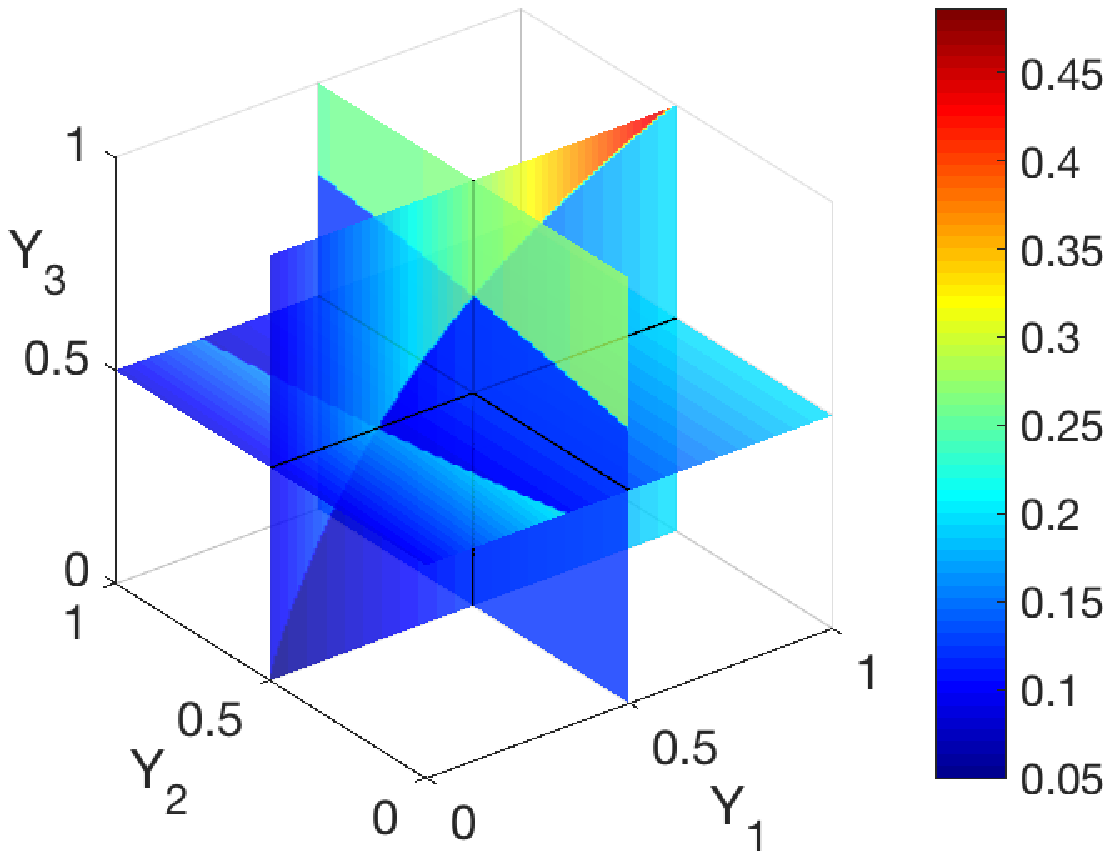}
\label{fig:surfaces:Sod}
}
~
\subfigure[{CO$_2$} storage problem, Sect.~\ref{sec:num-res:CO2}.]
{\includegraphics[width=0.31\textwidth]{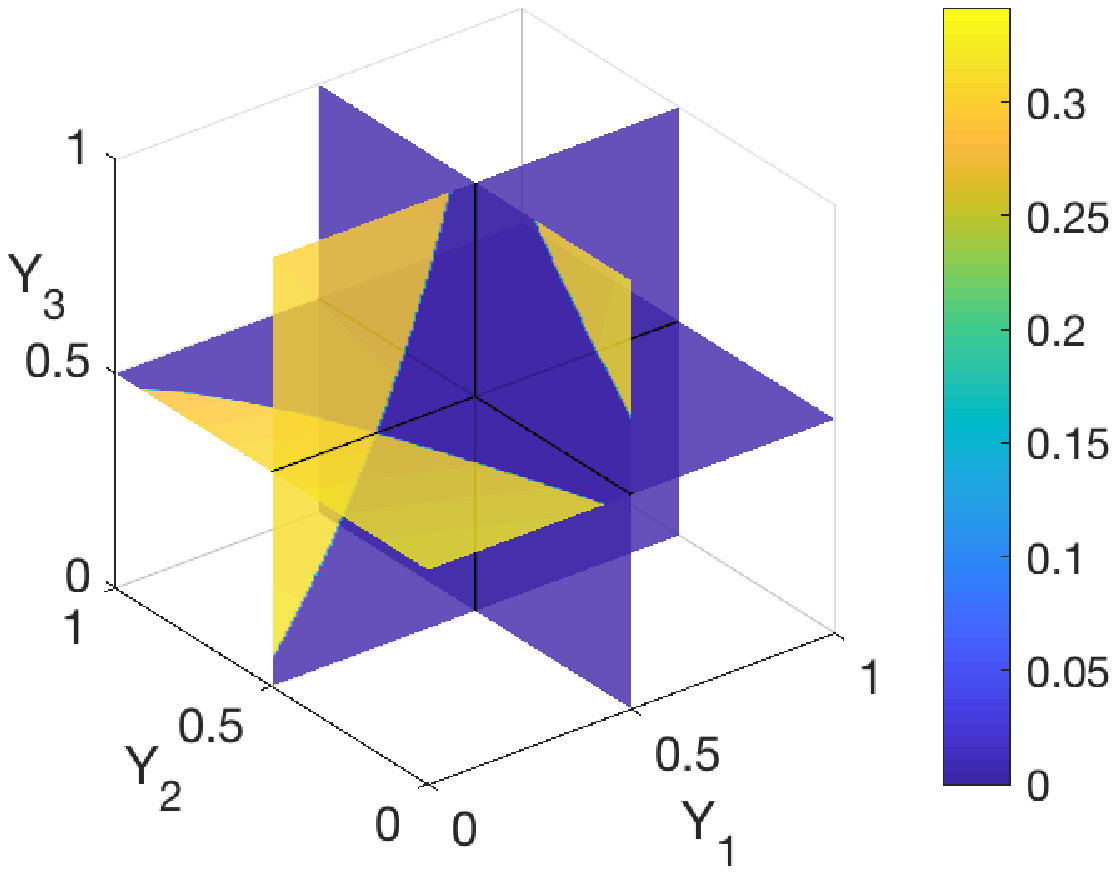}
\label{fig:surfaces:CO2}
}
\caption{Test problems $Q=f(\bY)$. Here, $Q$ is the stress threshold
  (MPa), density (kg/m$^3$), and normalized CO$_2$ plume height
  (dimensionless), respectively.}
\label{fig:surfaces_test_problems}
\end{figure}
For all test problems, we present results using hybrid sampling with
$\alpha=0$ (proportional sampling), $\alpha=0.9$, and $\alpha$
determined dynamically on $[0, \ 0.95]$ using
Algorithm~\ref{algo:selection:alpha}.

\subsection{Hyperspherical discontinuity in multiple dimensions}
\label{sec:num-res:hyper-sphere}
Consider the unit step function with all coordinates within the unit
intervals, and restricted by the hypersphere in $n$ dimensions with
center at the origin and radius $r_n$ chosen so that the hypersphere's
volume is equal to $2^{n-1}$,
\begin{equation}
\label{eq:test:hypersphere}
f(\bY) = \left\{ \begin{array}{ll} 
1 & \mbox{if } \Vert\bY \Vert_2^2\leq r_{n}^2 \mbox{ and } 0\leq Y_i \leq 1, i=1,\dots, n, \\
0 & \mbox{otherwise}
\end{array}\right..
\end{equation}
%%%%%
An example of the development of the adaptive stratification is shown
for the $n=2$ case in Fig.~\ref{fig:stratifications_P1}. Starting from
30 uniformly distributed samples in the unit square, a total of 10,000
samples are distributed adaptively, resulting in a final
stratification consisting of 17 strata. The samples are progressively
allocated to strata that contain the discontinuity, i.e., have
non-zero variances. In addition, 10\% of samples every iteration are
allocated proportionally to enable detecting unseen solution features.
\begin{figure}[h]
\centering
\subfigure[2 strata.]
{\includegraphics[width=0.225\textwidth]{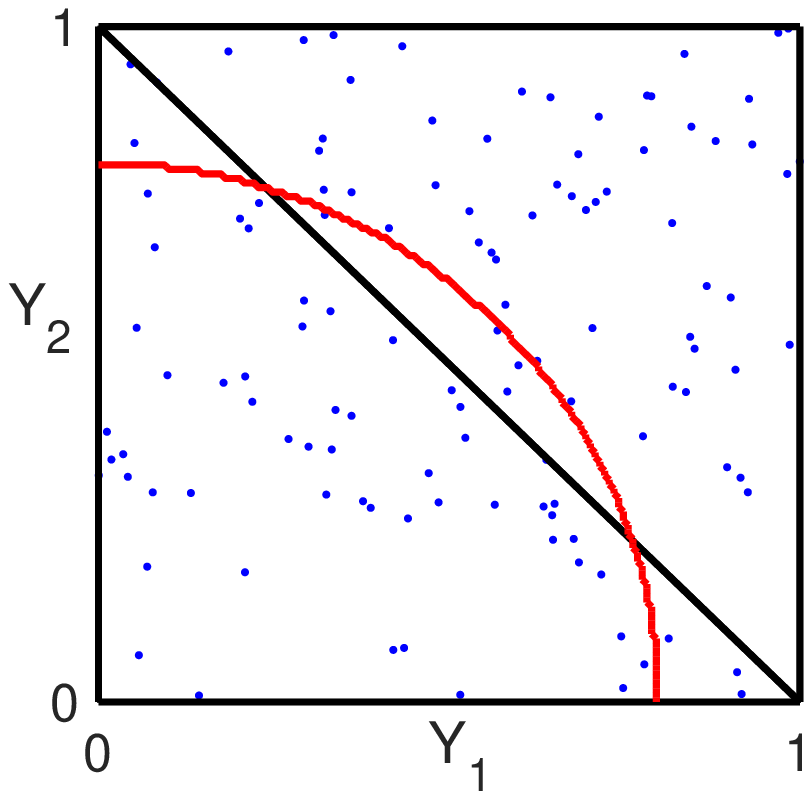}
}
~
\subfigure[3 strata.]
{\includegraphics[width=0.225\textwidth]{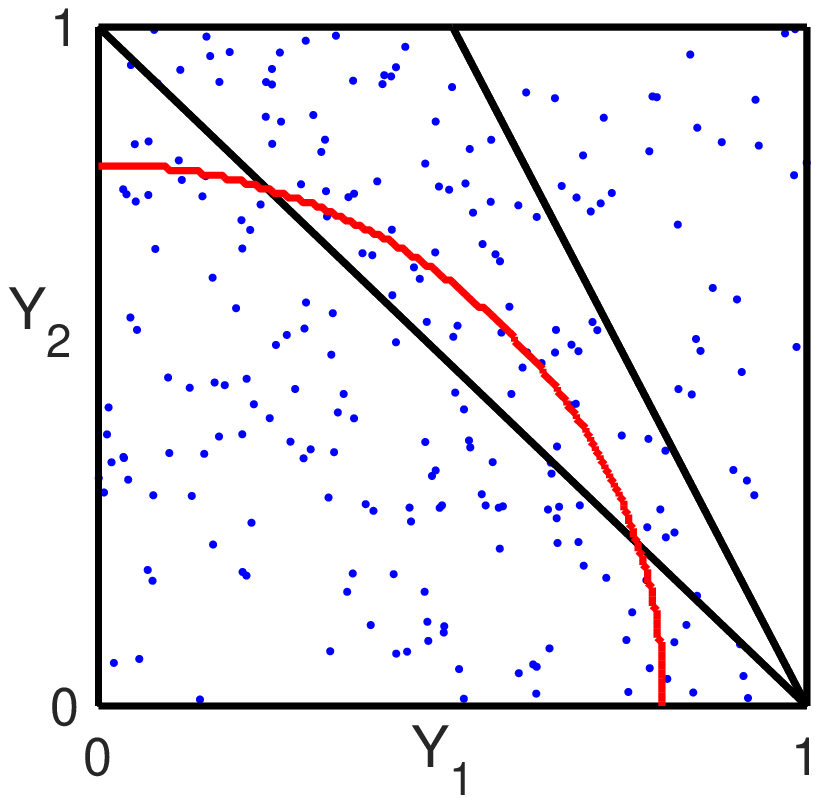}
}
~
\subfigure[10 strata.]
{\includegraphics[width=0.225\textwidth]{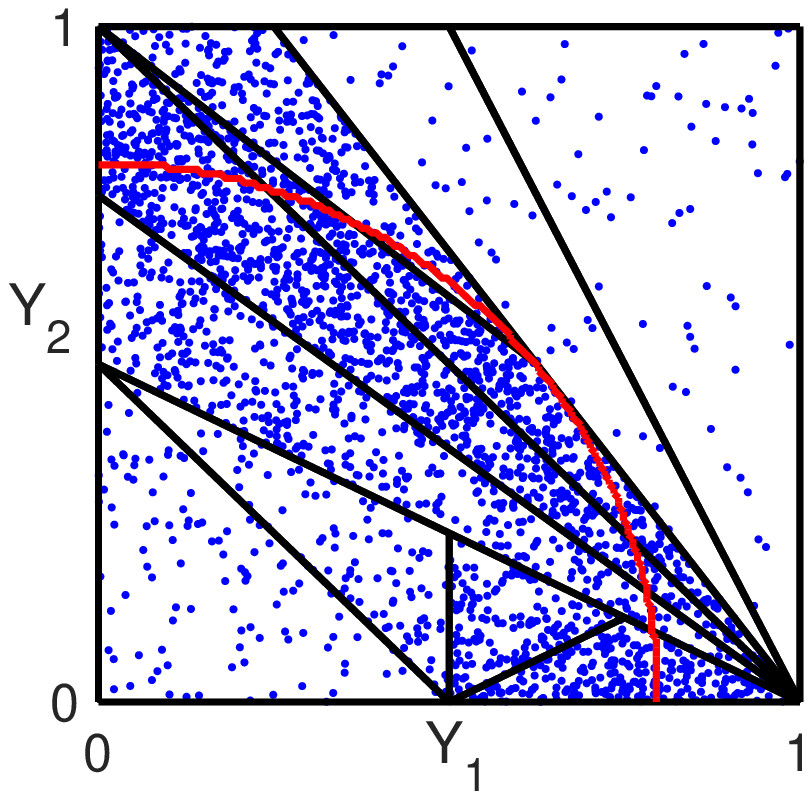}
}
~
\subfigure[17 strata.]
{\includegraphics[width=0.225\textwidth]{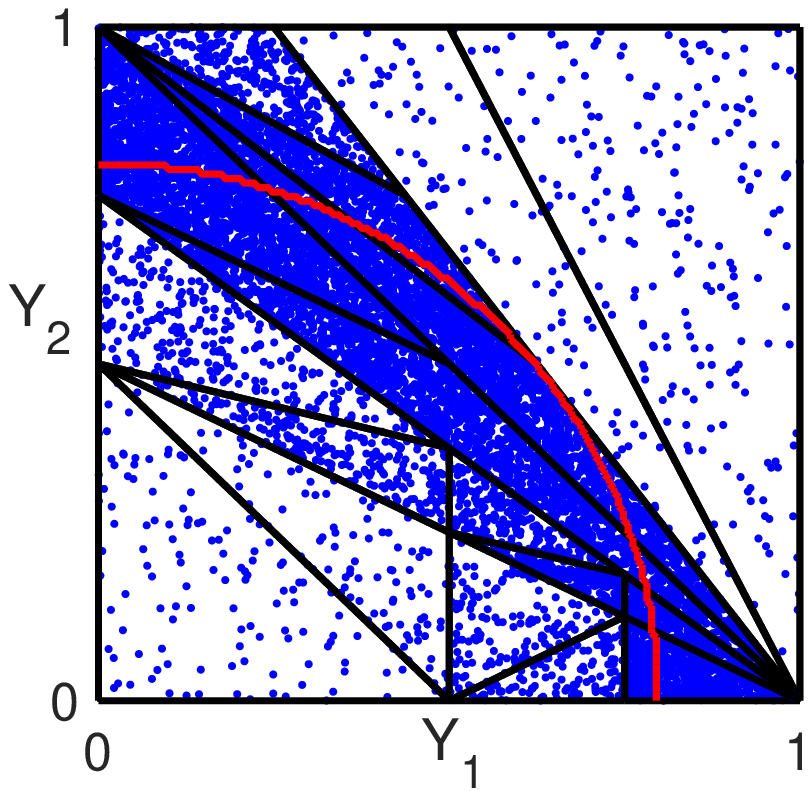}
}
\caption{Evolution of simplex tessellation for the quarter of a circle (hypersperical $n=2$) test case. Hybrid sampling with $\alpha=0.9$ and 30 samples per simplex added at every iteration with a total of 10,000 samples.}
\label{fig:stratifications_P1}
\end{figure}
%%%%%
Speedups for $n=2,3,4$ are shown in
Figs.~\ref{fig:speedups_P1}-\ref{fig:speedups_P3}, respectively. For
$n=2$ and the largest sample sizes, the speedups are 2-3 orders of
magnitude compared to standard Monte Carlo sampling. For $n=3$, the
corresponding speedups are 10-40, and for $n=4$ typically 5-10.  Small
values of $c$, intuitively corresponding to more greedy yet more
``risky'' stratifications in view of Sect.~\ref{sec:dist_samples},
tend to slightly increase speedup, but the pattern is not
unambiguous. Dynamically choosing $\alpha$ in most cases resembles
sampling with fixed $\alpha=0.9$ with some notable differences for
smaller sample sizes and $n=4$.

%%%

\begin{figure}[h]
\centering
\subfigure[Hyperrect., $\alpha=0$.]
{\includegraphics[width=0.3\textwidth]{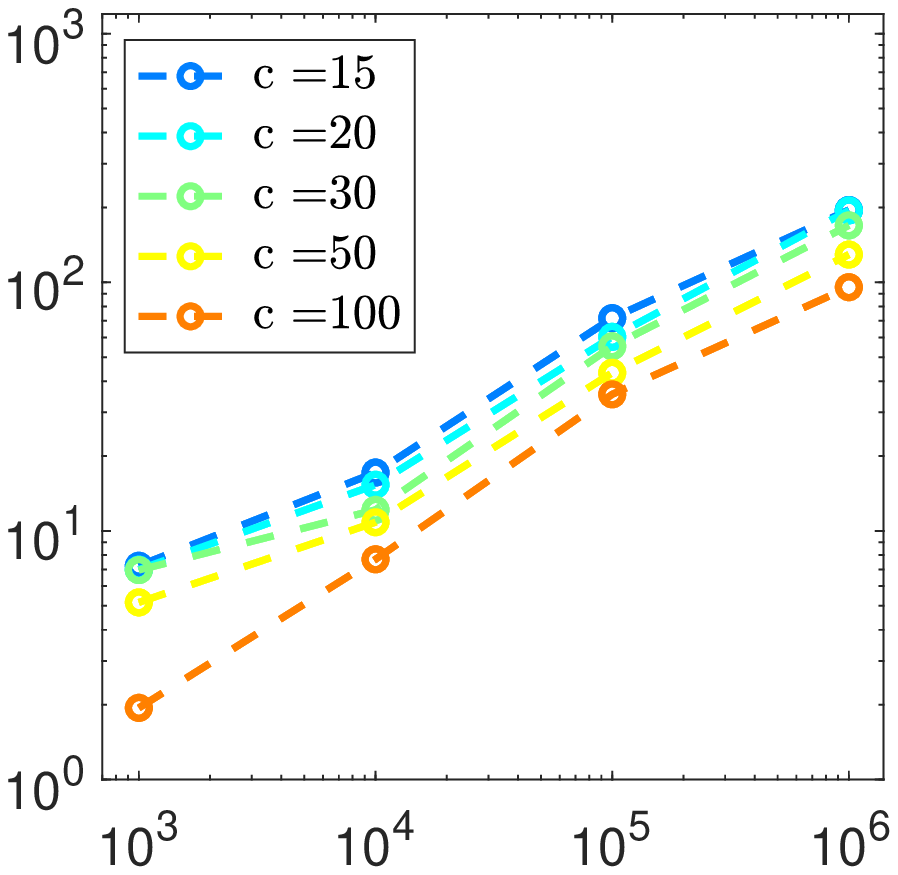}
}
~
\subfigure[Hyperrect., $\alpha=0.9$.]
{\includegraphics[width=0.3\textwidth]{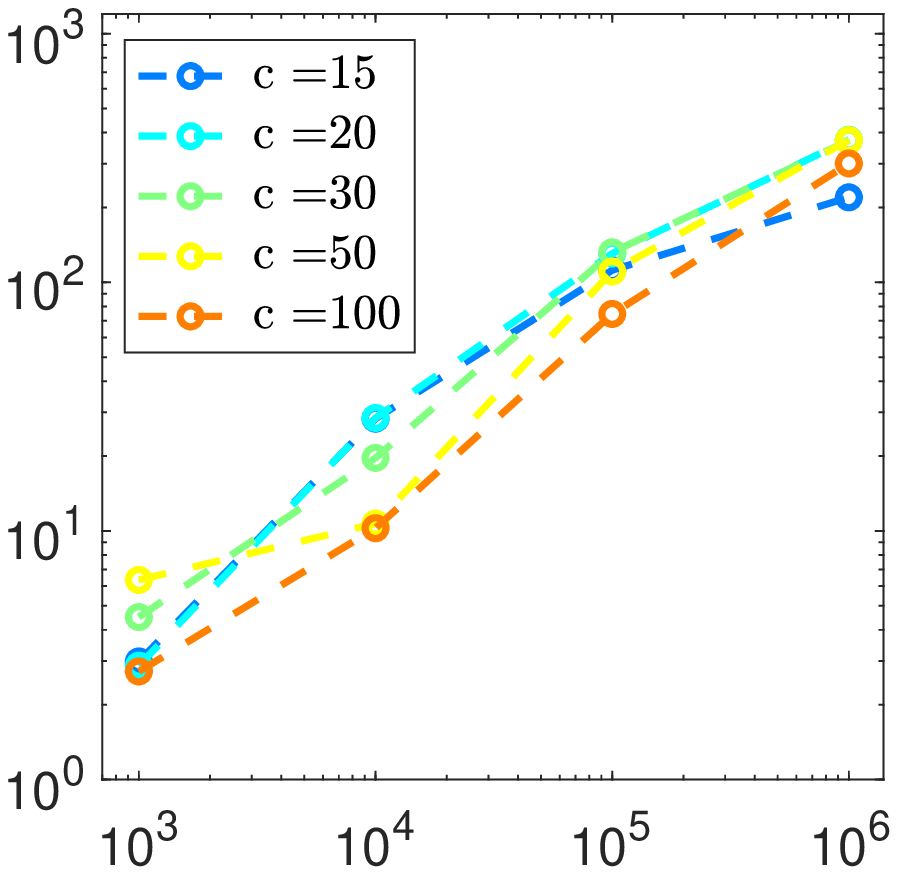}
}
~
\subfigure[Hyperrect., dynamic $\alpha$.]
{\includegraphics[width=0.3\textwidth]{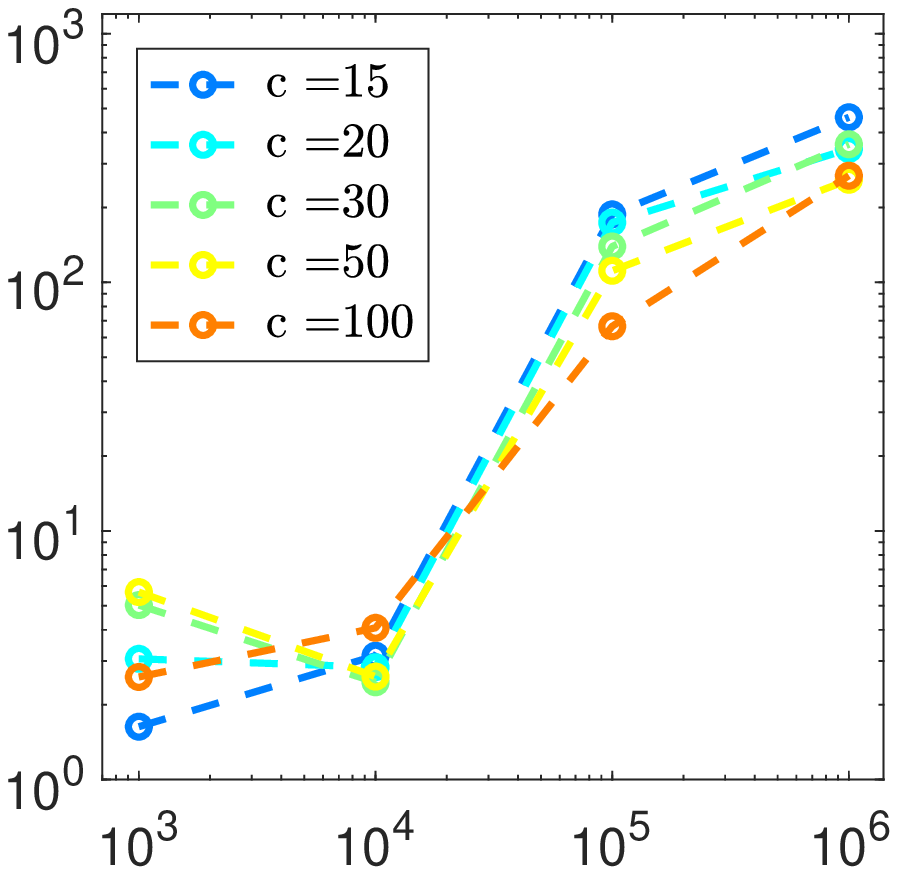}
}
~
\subfigure[Simplices, $\alpha=0$.]
{\includegraphics[width=0.3\textwidth]{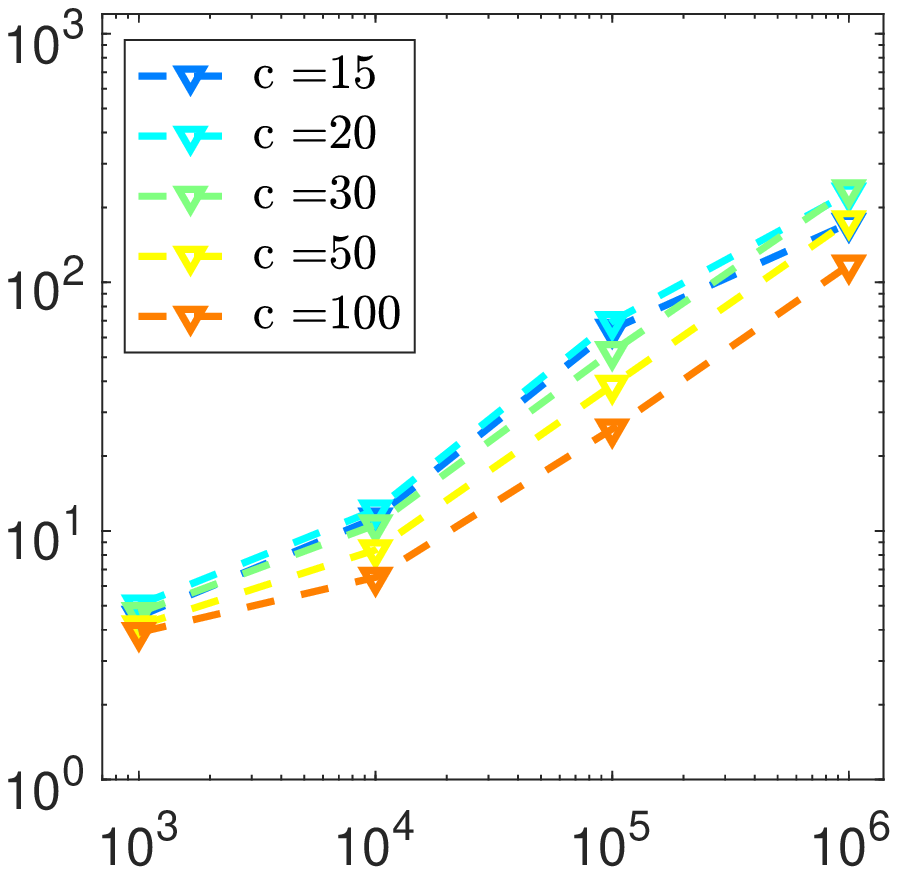}
}
~
\subfigure[Simplices, $\alpha=0.9$.]
{\includegraphics[width=0.3\textwidth]{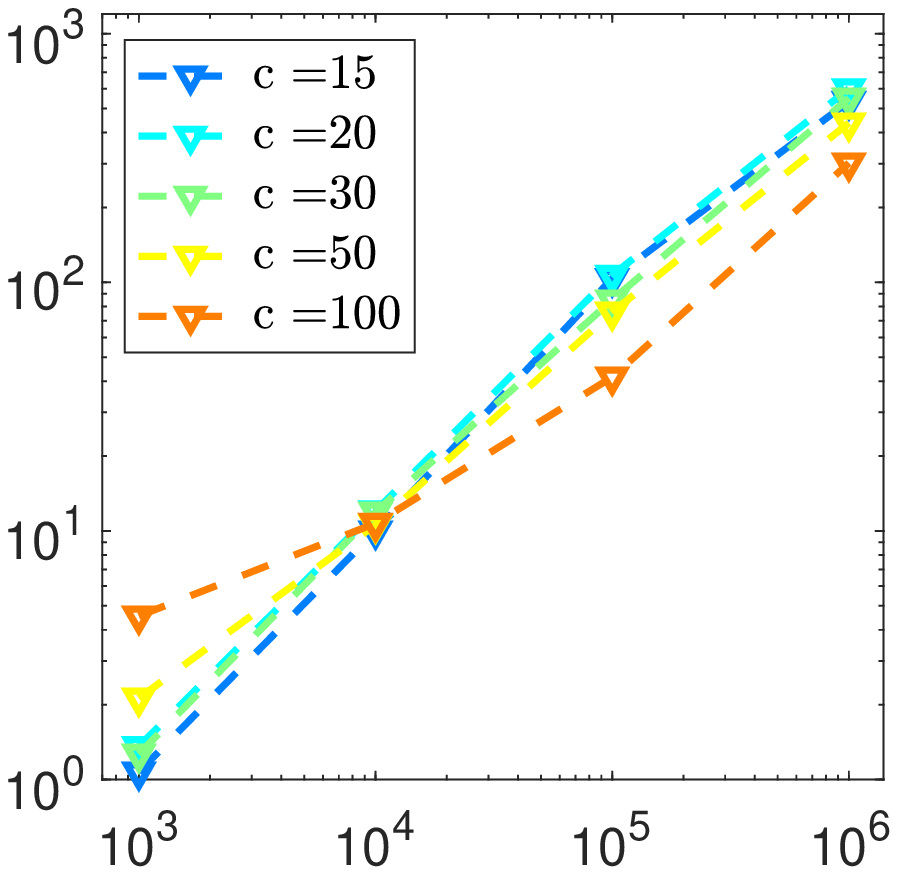}
}
~
\subfigure[Simplices, dynamic $\alpha$.]
{\includegraphics[width=0.3\textwidth]{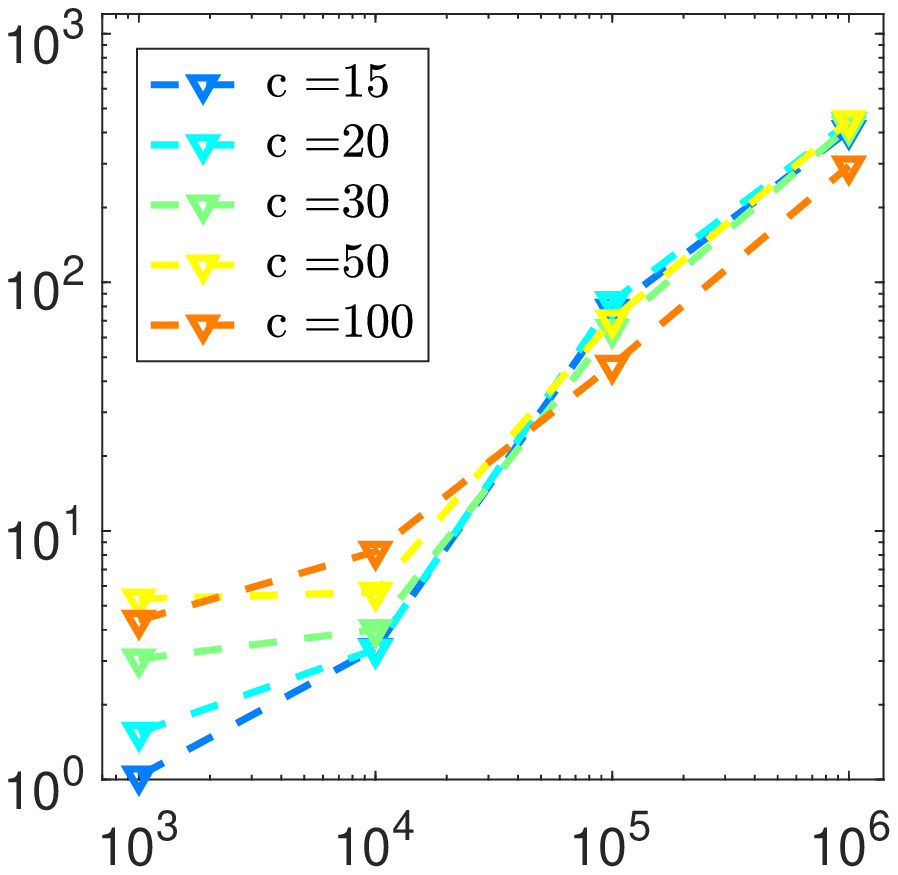}
}
\caption{Hyperspherical discontinuity problems with $n=2$ random dimensions. Hyperrectangular and simplex tessellation, sampling with $\alpha$ being 0, 0.9 or set dynamically, and 1000 repetitions, variable $N_{\textup{max}}$ and sampling constant $c$.}
\label{fig:speedups_P1}
\end{figure}

%%%%%
%%%

\begin{figure}[h]
\centering
\subfigure[Hyperrect., $\alpha=0$.]
{\includegraphics[width=0.3\textwidth]{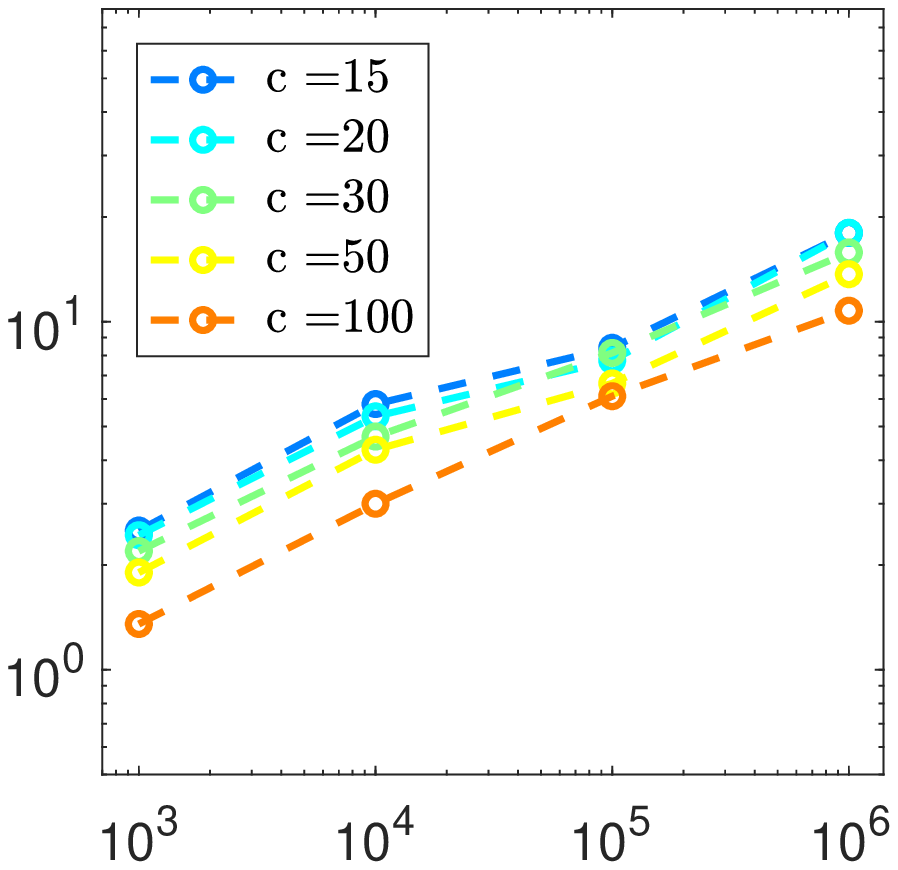}
}
~
\subfigure[Hyperrect., $\alpha=0.9$.]
{\includegraphics[width=0.3\textwidth]{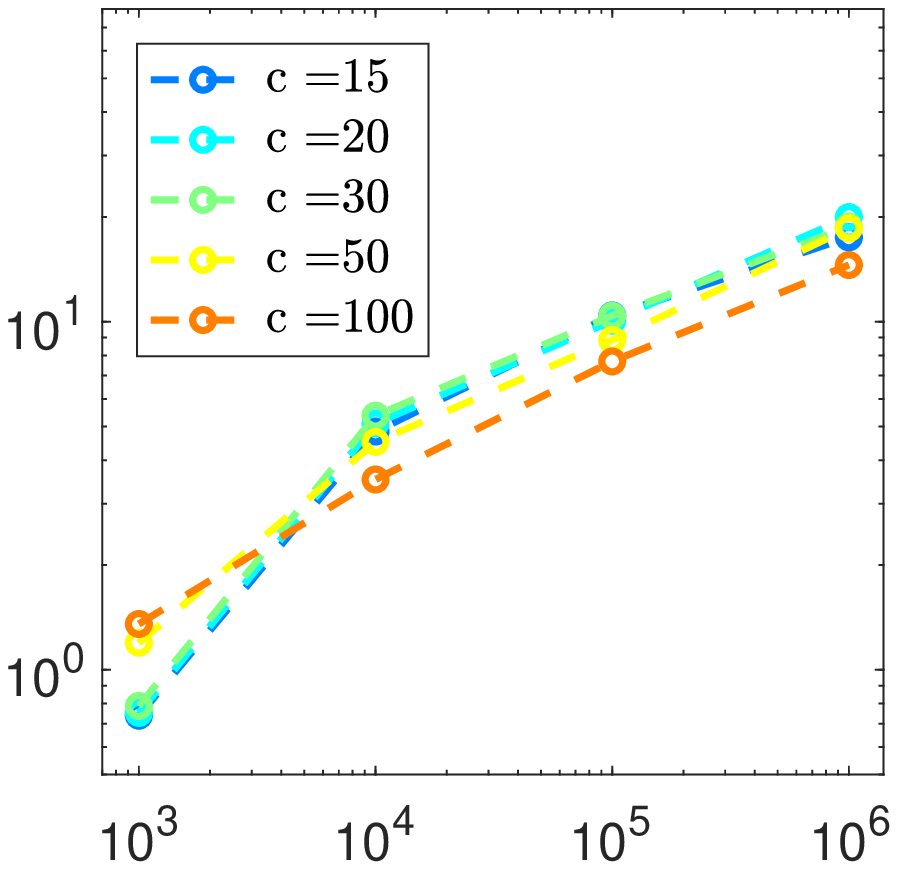}
}
~
\subfigure[Hyperrect., dynamic $\alpha$.]
{\includegraphics[width=0.3\textwidth]{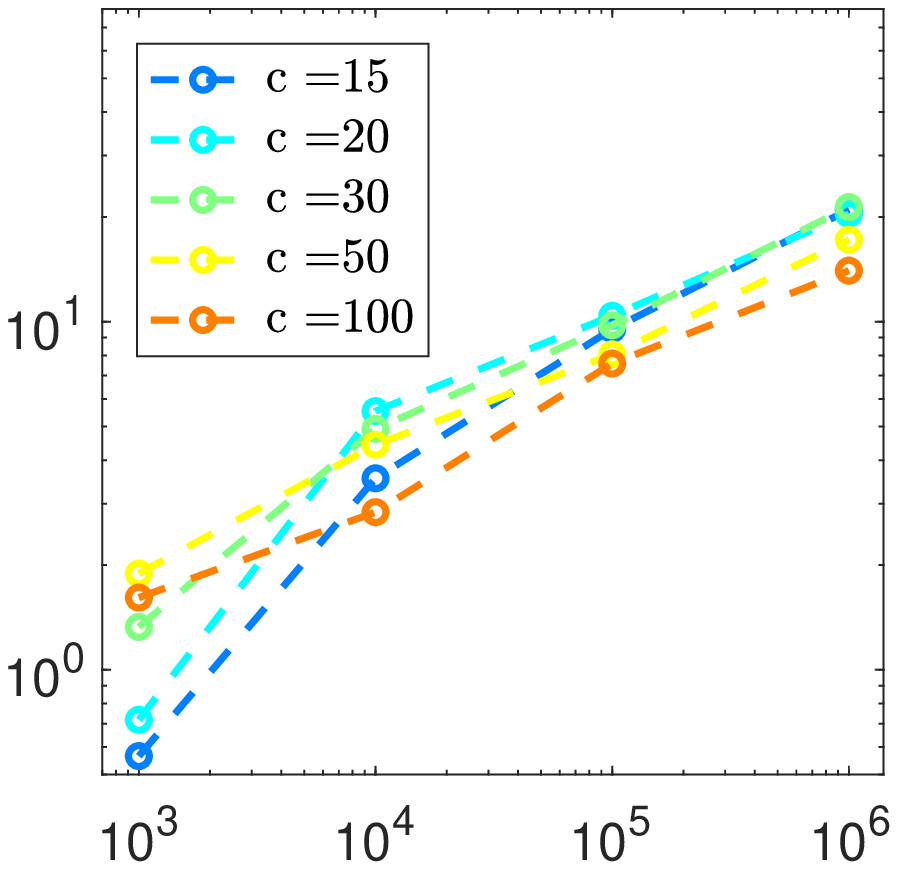}
}
~
\subfigure[Simplices, $\alpha=0$.]
{\includegraphics[width=0.3\textwidth]{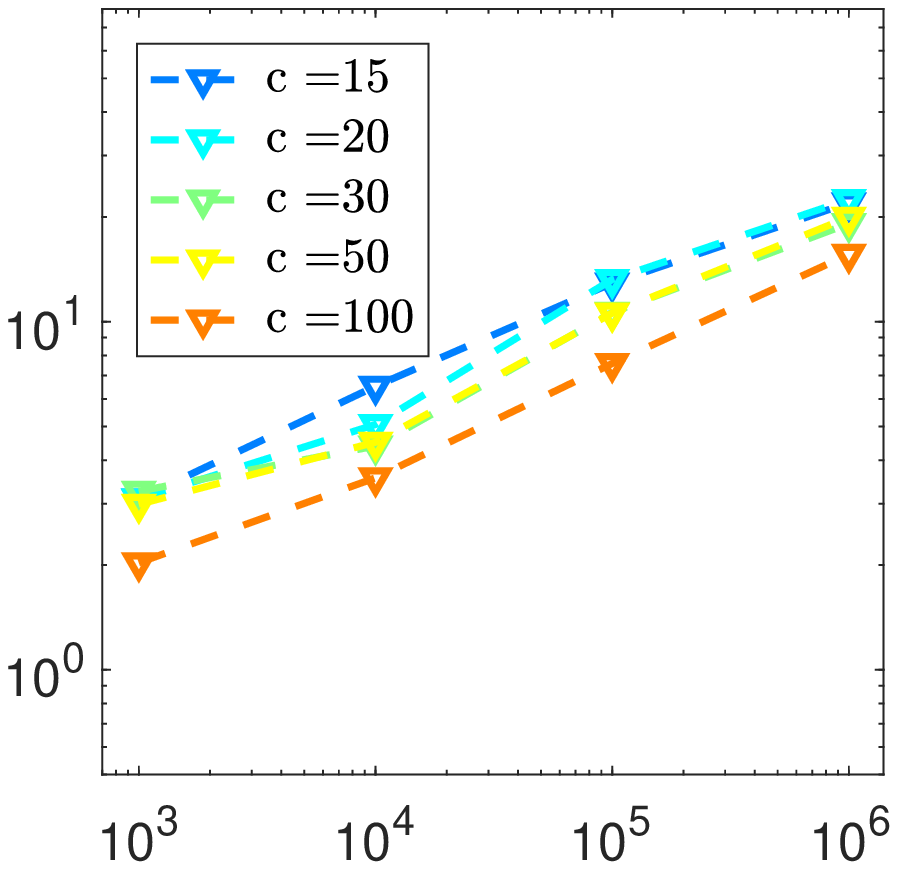}
}
~
\subfigure[Simplices, $\alpha=0.9$.]
{\includegraphics[width=0.3\textwidth]{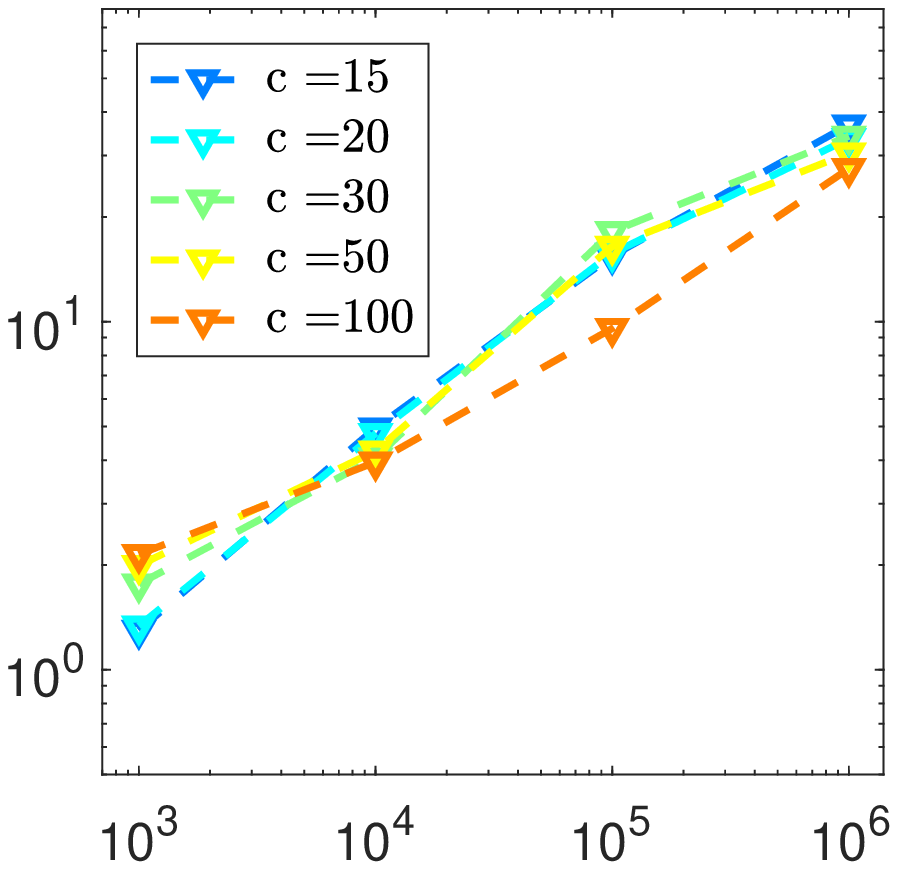}
}
~
\subfigure[Simplices, dynamic $\alpha$.]
{\includegraphics[width=0.3\textwidth]{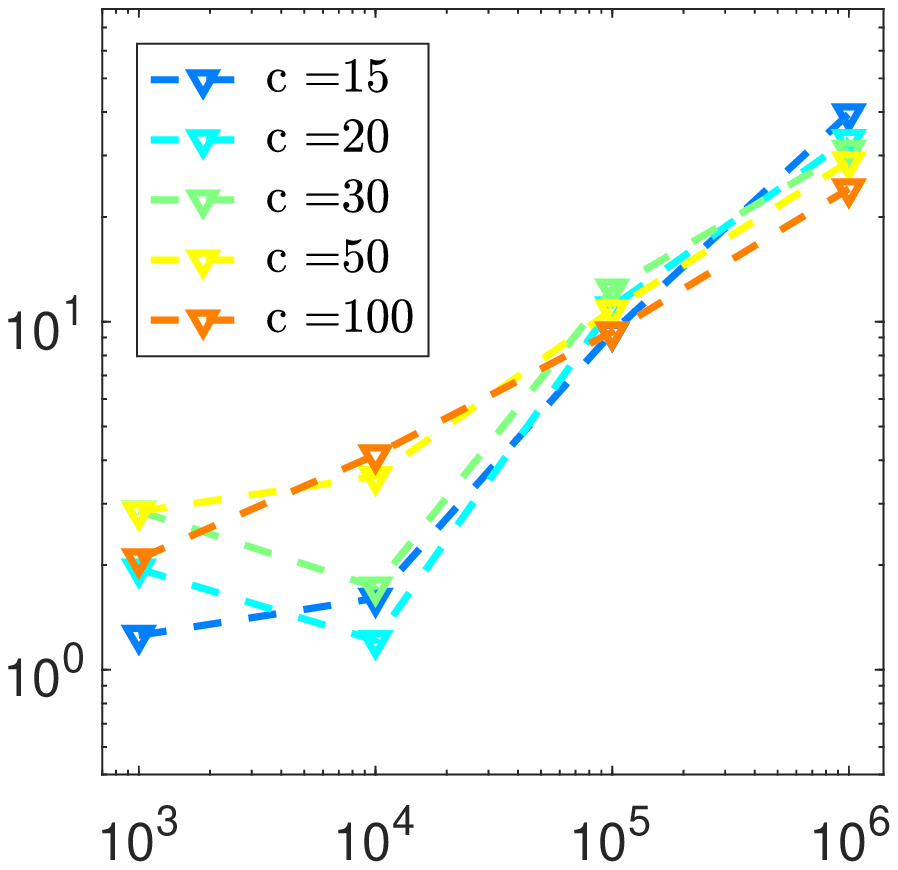}
}
\caption{Hyperspherical discontinuity problems with $n=3$ random dimensions. Hyperrectangular and simplex tessellation, sampling with $\alpha$ being 0, 0.9 or set dynamically, and 1000 repetitions, variable $N_{\textup{max}}$ and sampling constant $c$.}
\label{fig:speedups_P2}
\end{figure}

%%%

\begin{figure}[h]
\centering
\subfigure[Hyperrect., $\alpha=0$.]
{\includegraphics[width=0.3\textwidth]{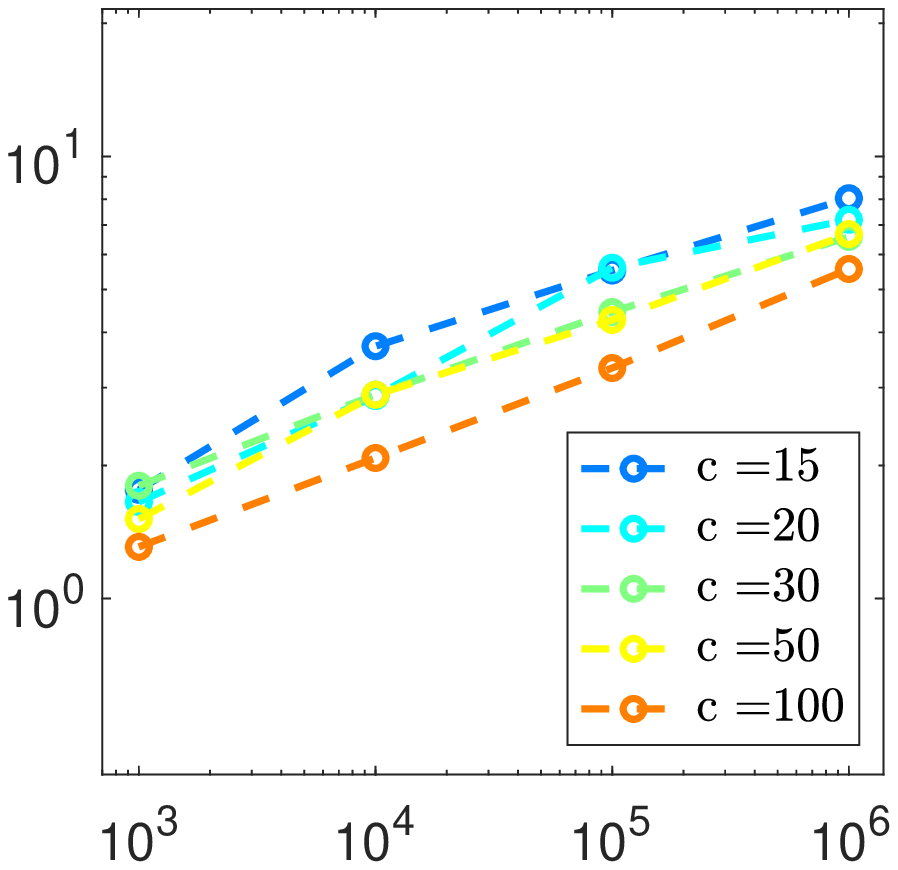}
}
~
\subfigure[Hyperrect., $\alpha=0.9$.]
{\includegraphics[width=0.3\textwidth]{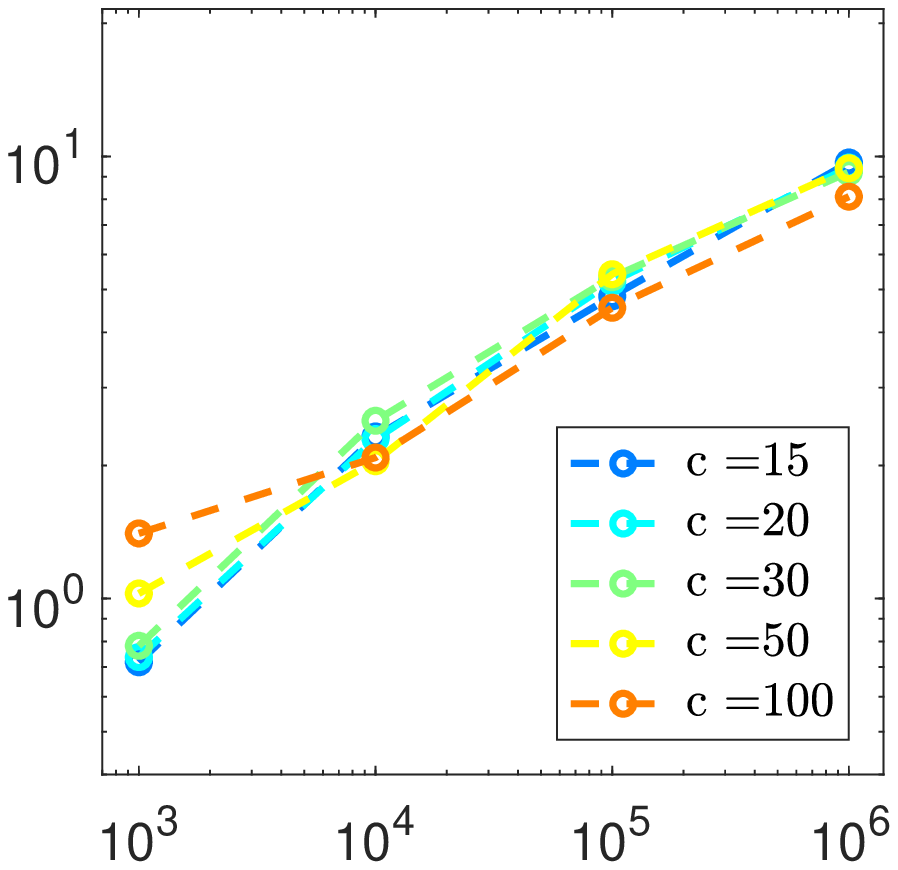}
}
~
\subfigure[Hyperrect., dynamic $\alpha$.]
{\includegraphics[width=0.3\textwidth]{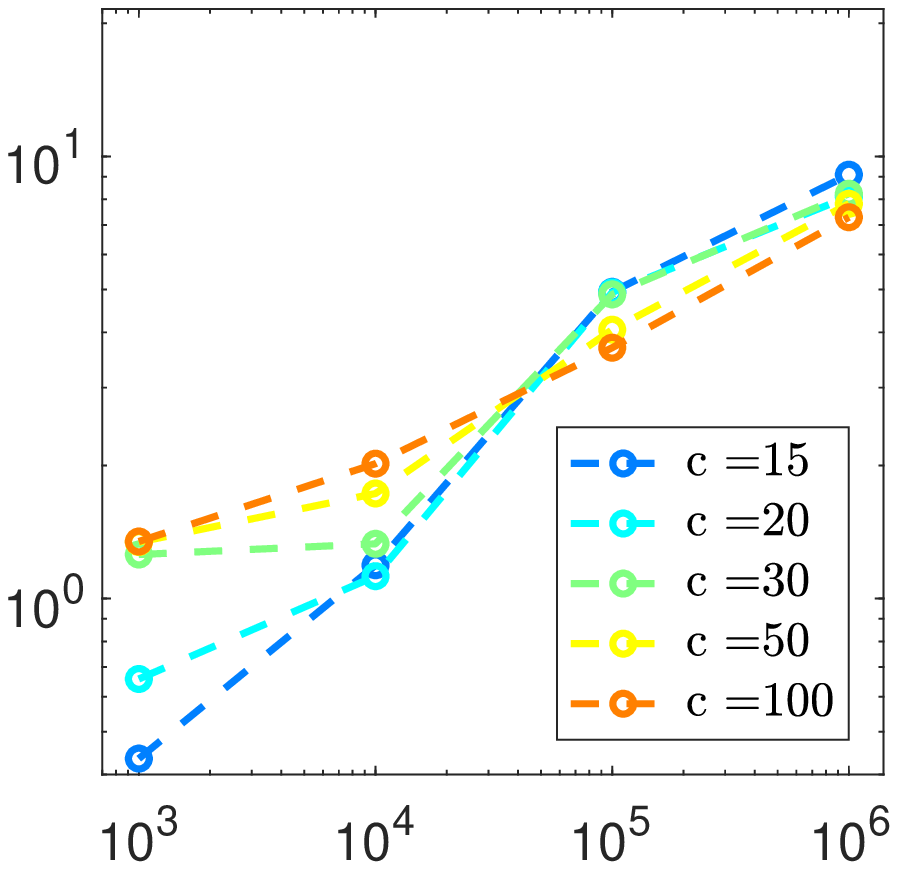}
}
~
\subfigure[Simplices, $\alpha=0$.]
{\includegraphics[width=0.3\textwidth]{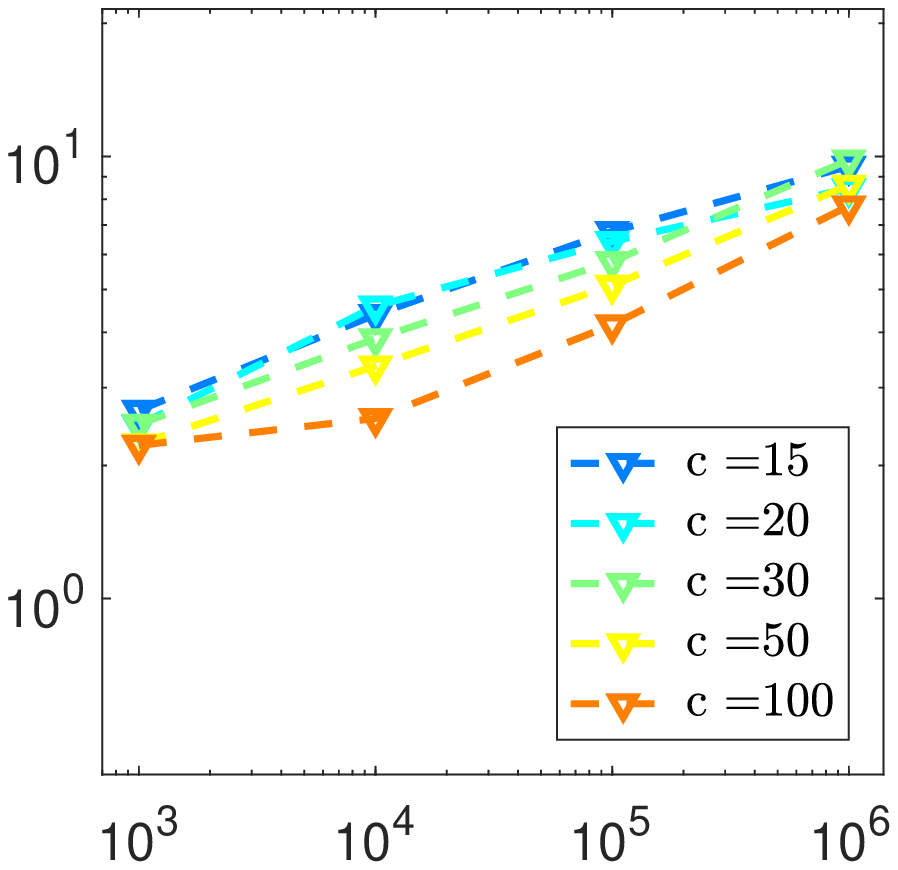}
}
~
\subfigure[Simplices, $\alpha=0.9$.]
{\includegraphics[width=0.3\textwidth]{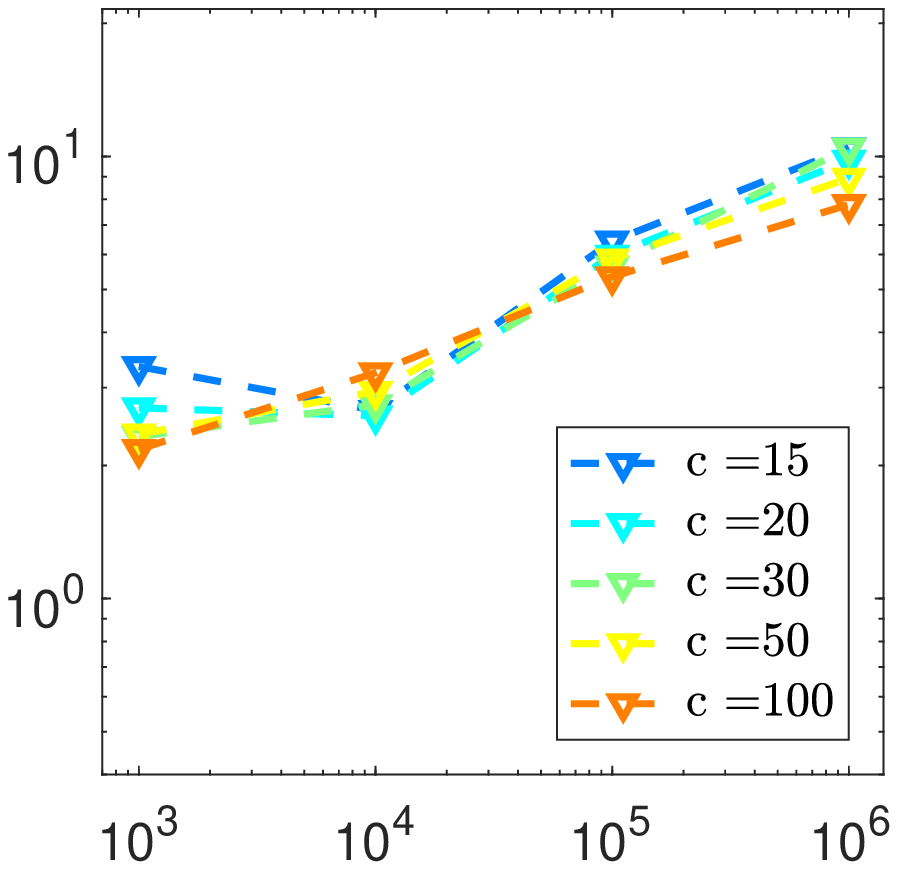}
}
~
\subfigure[Simplices, variable $\alpha$.]
{\includegraphics[width=0.3\textwidth]{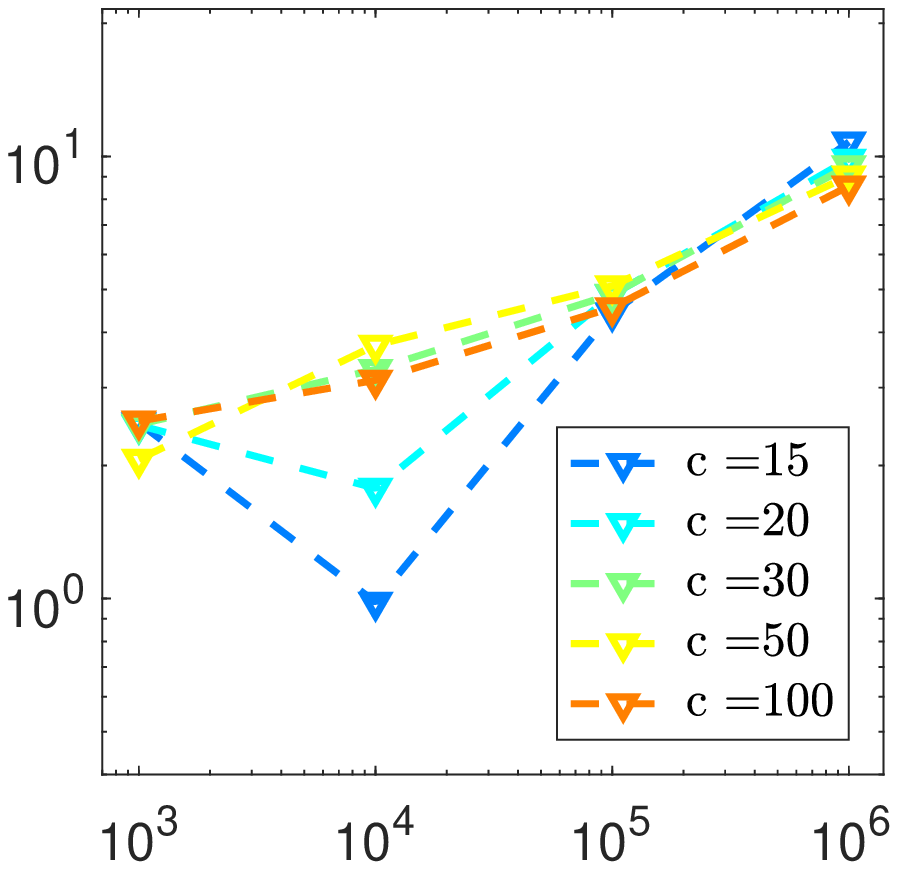}
}
\caption{Hyperspherical discontinuity problems with $n=4$ random dimensions. Hyperrectangular and simplex tessellation, sampling with $\alpha$ being 0, 0.9 or set dynamically, and 1000 repetitions, variable $N_{\textup{max}}$ and sampling constant $c$.}
\label{fig:speedups_P3}
\end{figure}

\subsection{Geomechanics fault surface stress problem}
\label{sec:num-res:fault-stress}
Consider injection through a well into a porous medium with unknown
permeability and unknown location of a fault. We employ a simplified
model for the surface stress at a fault assuming single-phase flow and
no poro-elastic effects of injection, as
in~\cite{Nordbotten_Celia_11}. The background stress field only
changes at the injection surface, and hydrostatic pressure is assumed.
The QoI is the stress threshold $S_{\text{thres}}$ that indicates fault stability, 
\[
S_{\text{thres}} = \left\{
\begin{array}{ll}
\Delta \tau - \Delta \sigma \mu_{F} \Delta F & \mbox{ if } \Delta \text{CFF} < 0, \\
0 & \mbox{ otherwise}
\end{array}
\right.
\]
where $\Delta \text{CFF} = \Delta \sigma \mu_{F} - \Delta \tau$ is a
Coulomb fault failure criterion. The friction drop $\Delta F$ is 0.8,
the shear stress $\Delta \tau$ is 20 MPa, the normal stress
$\Delta \sigma$ is the difference between an original stress assumed
to be 50 MPa and the pressure $p = \rho_{w} g ((d + 0.5 H) + h)$;
where $\rho_{w}=1000$ kg/m$^3$ (density water), $g=9.81$
(gravitational constant), $d=2000$ m (depth of caprock), $H=100$ m
(height of injection formation), and the hydraulic head $h$ is a
function of the well distance and given by Eq.~(2.46)
in~\cite{Nordbotten_Celia_11}.  The stochastic input parameters are
the well distance (in meters) with distribution $U[10,1000]$, and the
friction coefficient $\mu_{F} $ (dimensionless) with lognormal(0.2,
0.7) distribution.

Figure~\ref{fig:speedups_P4} shows the variance ratios for the fault
surface problem as a function of total number of samples, for
$\alpha=0, 0.9$ and dynamic $\alpha$ using hyperrectangles and
simplices for the adaptive stratifications. For this problem, the best
performance is achieved for hyperrectangular stratifications with
proportional allocation ($\alpha=0$) with speedups exceeding three
orders of magnitude compared to Monte Carlo for the largest sample
sizes investigated. This contrasts with the hyperspherical test cases
previously presented, where simplices and nearly optimal allocation
yield the best results.

\begin{figure}[h]
\centering
\subfigure[Hyperrect., $\alpha=0$.]
{\includegraphics[width=0.3\textwidth]{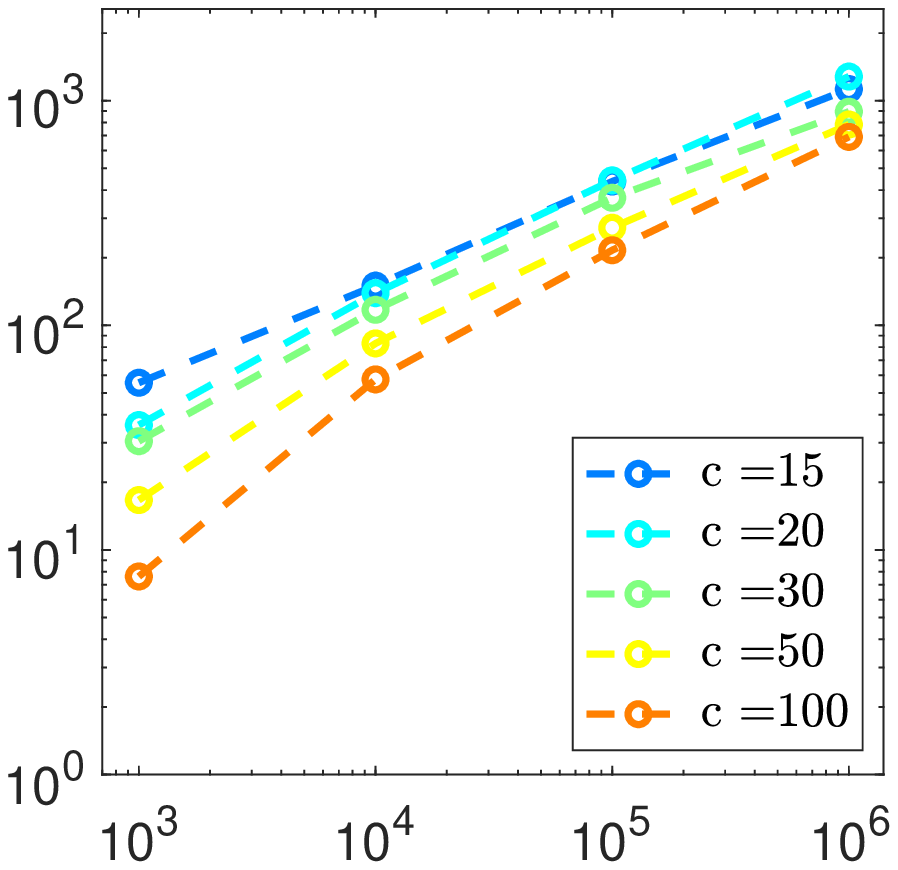}
\label{fig:example:P1:hyperrect:QoI:N3}
}
~
\subfigure[Hyperrect., $\alpha=0.9$.]
{\includegraphics[width=0.3\textwidth]{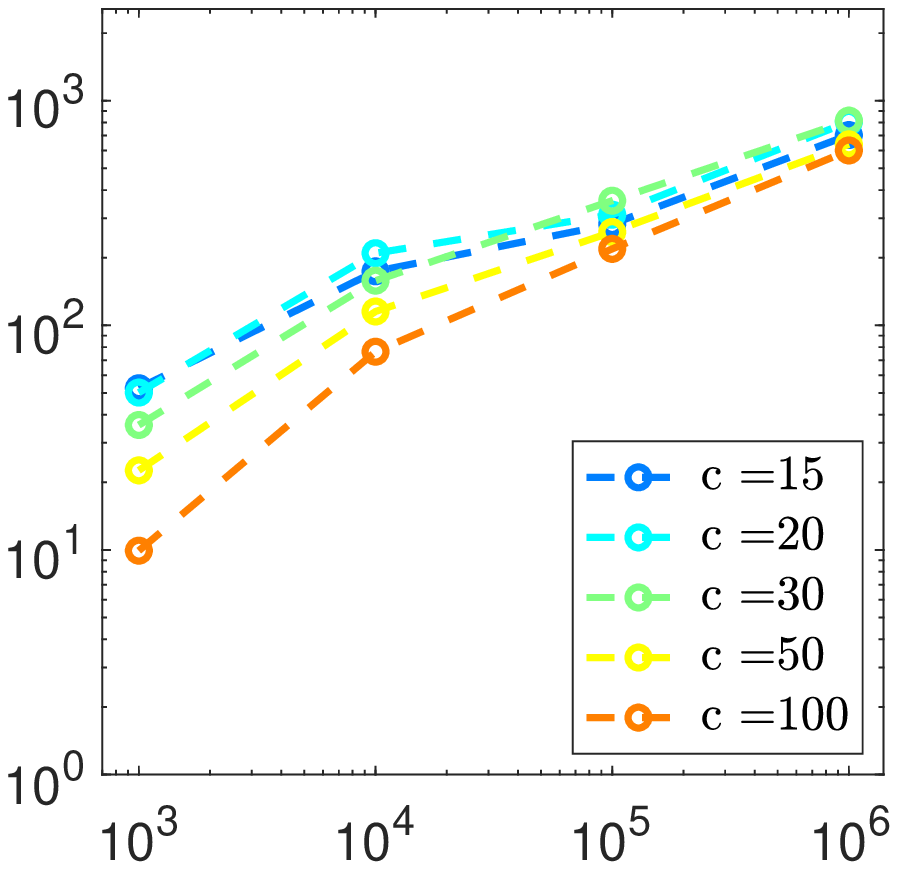}
\label{fig:example:P1:hyperrect:QoI:N4}
}
~
\subfigure[Hyperrect., dynamic $\alpha$.]
{\includegraphics[width=0.3\textwidth]{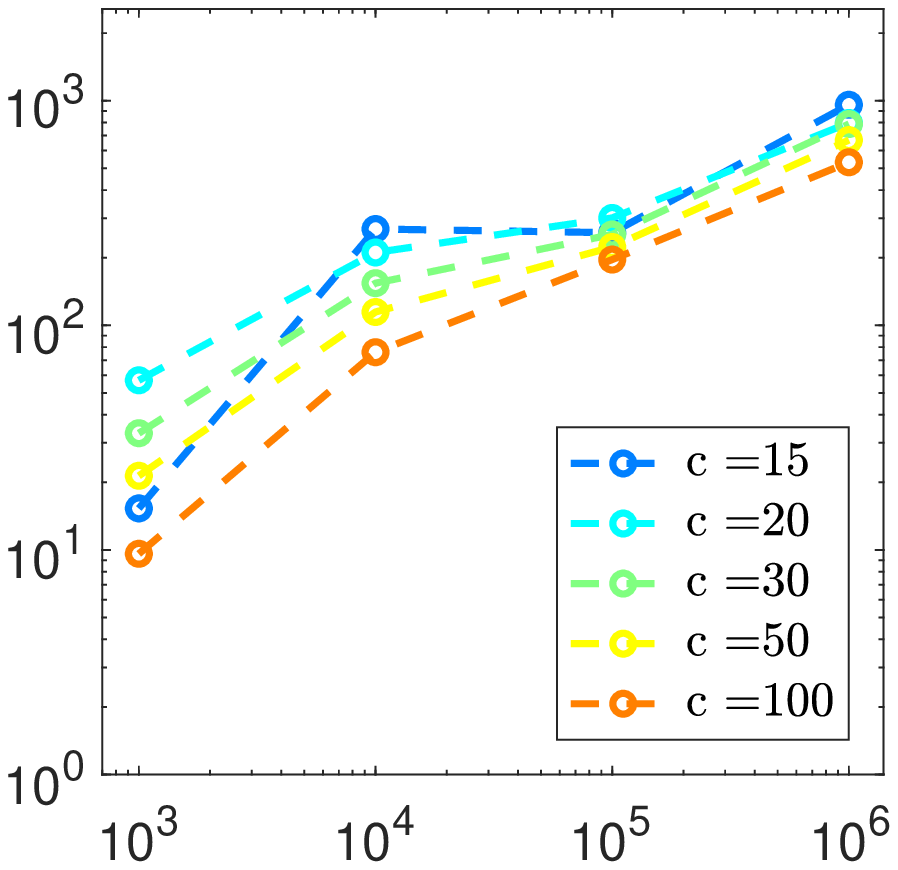}
\label{fig:example:P1:hyperrect:QoI:N4}
}
~
\subfigure[Simplices, $\alpha=0$.]
{\includegraphics[width=0.3\textwidth]{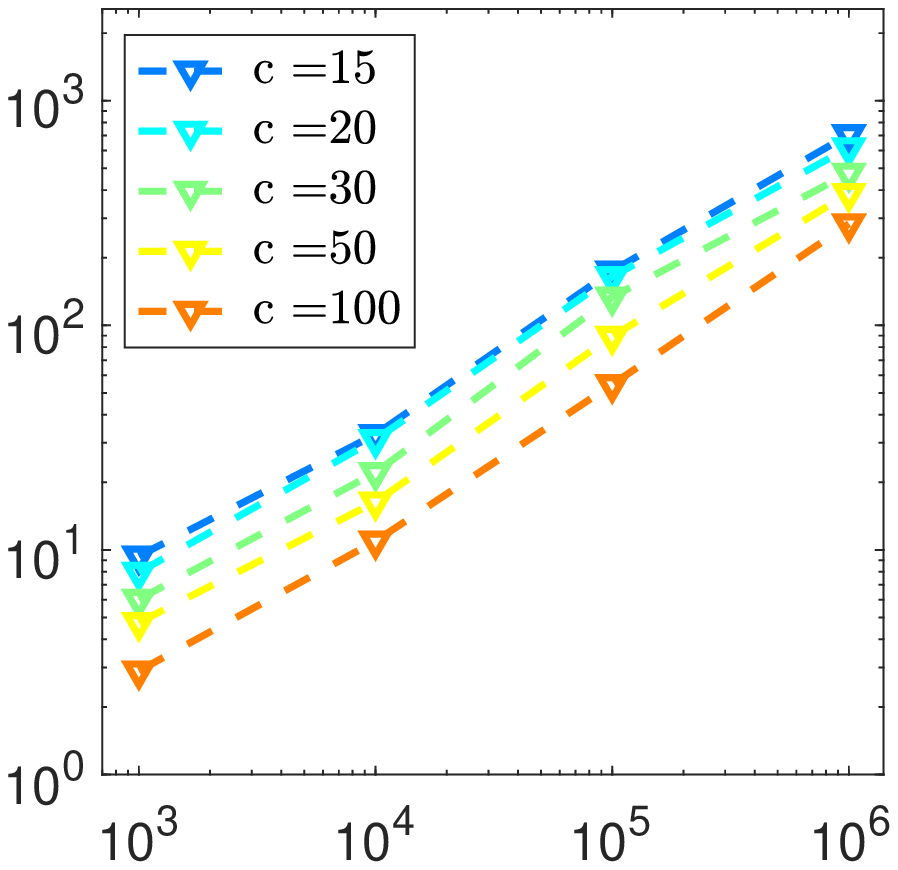}
}
~
\subfigure[Simplices, $\alpha=0.9$.]
{\includegraphics[width=0.3\textwidth]{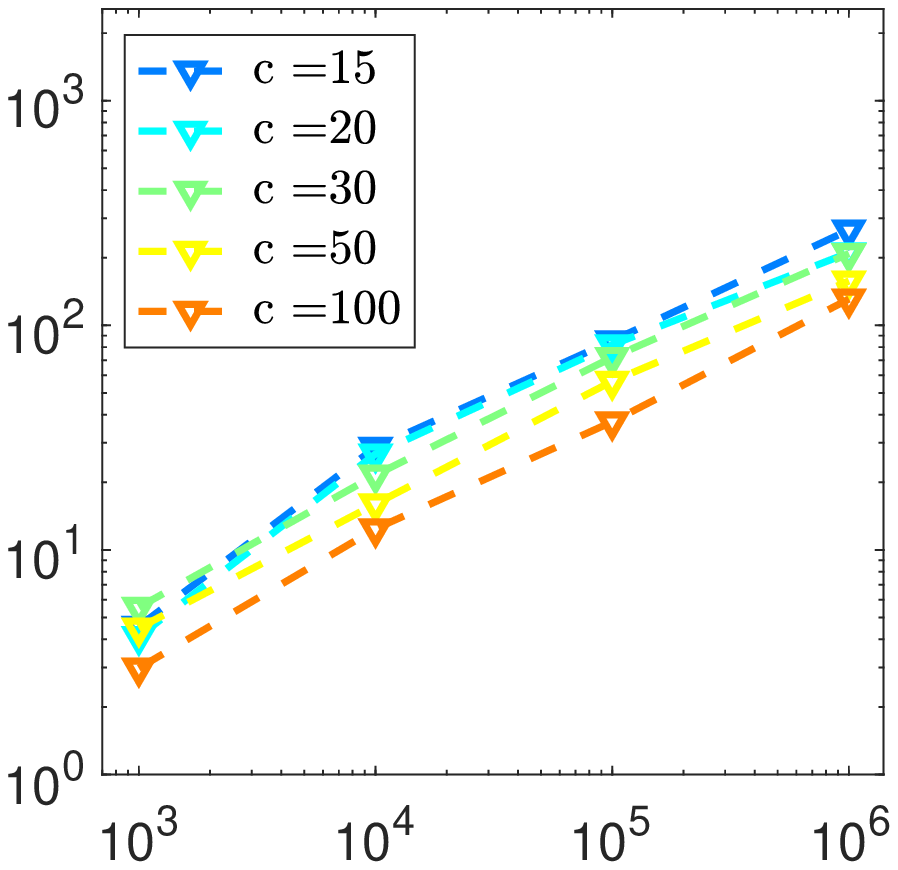}
}
~
\subfigure[Simplices, dynamic $\alpha$.]
{\includegraphics[width=0.3\textwidth]{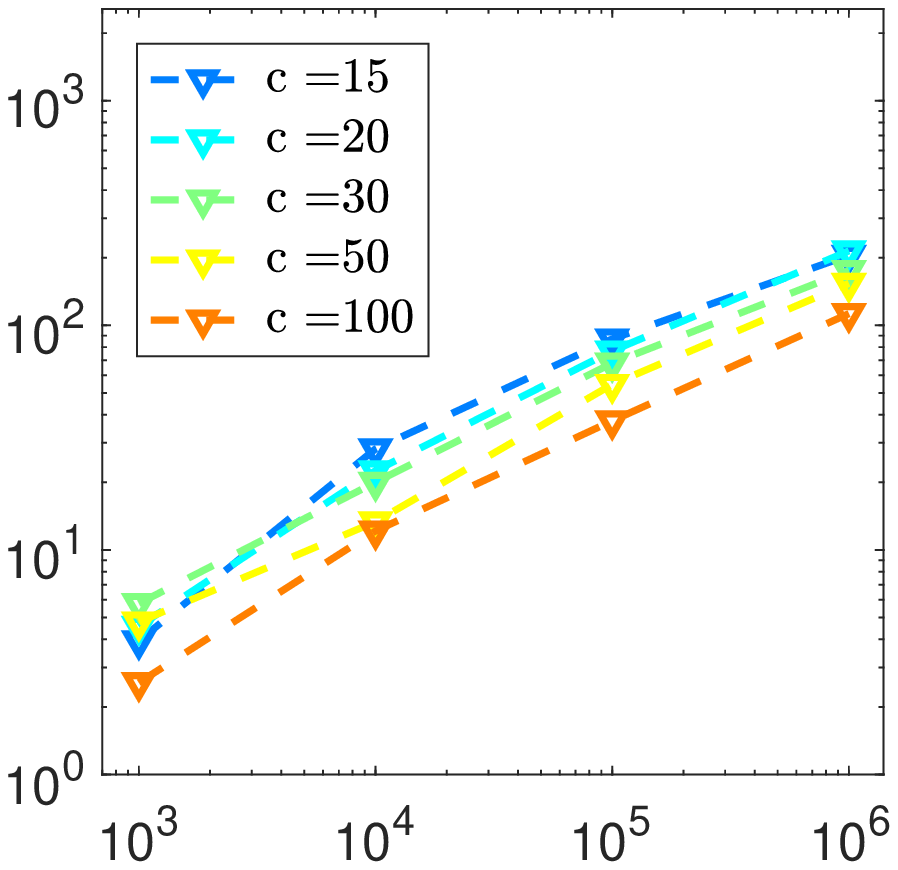}
}
\caption{Fault stress problem. Hyperrectangular and simplex tessellation, sampling with $\alpha$ being 0, 0.9 or set dynamically, and 1000 repetitions, variable $N_{\textup{max}}$ and sampling constant~$c$.}
\label{fig:speedups_P4}
\end{figure}

\subsection{Sod shock tube problem}
\label{sec:num-res:sod}
The Sod test case describes the time-dependent behavior of two gas
phases with different phase properties, separated by a membrane that
is instantly removed at time 0. The problem can be modeled by the
one-dimensional Euler equations and has been investigated in the
uncertainty quantification literature to illustrate challenges related
to discontinuities in stochastic space~\cite{Poette_etal_09,
  Tryoen_etal_10, Pettersson_etal_14}.  We assume the following three
uncertain parameters: left state density $U[0.7, \ 1.3]$, right state
density $U[0.05,\ 0.2]$, and initial location of the membrane with
distribution $U[0.45,\ 0.55]$. The quantity of interest is the density
at $(x,t)=(0.7, 0.1)$.

The speedups for hyperrectangles and simplices, hybrid sampling with $\alpha=0$, $\alpha=0.9$, and dynamic $\alpha$, are shown in Fig.~\ref{fig:speedups_P101}. The best performance is obtained with hyperrectangular stratification, despite a curved hypersurface discontinuity, see Fig.~\ref{fig:surfaces:Sod}.
\begin{figure}[h]
\centering
\subfigure[Hyperrect., $\alpha=0$.]
{\includegraphics[width=0.3\textwidth]{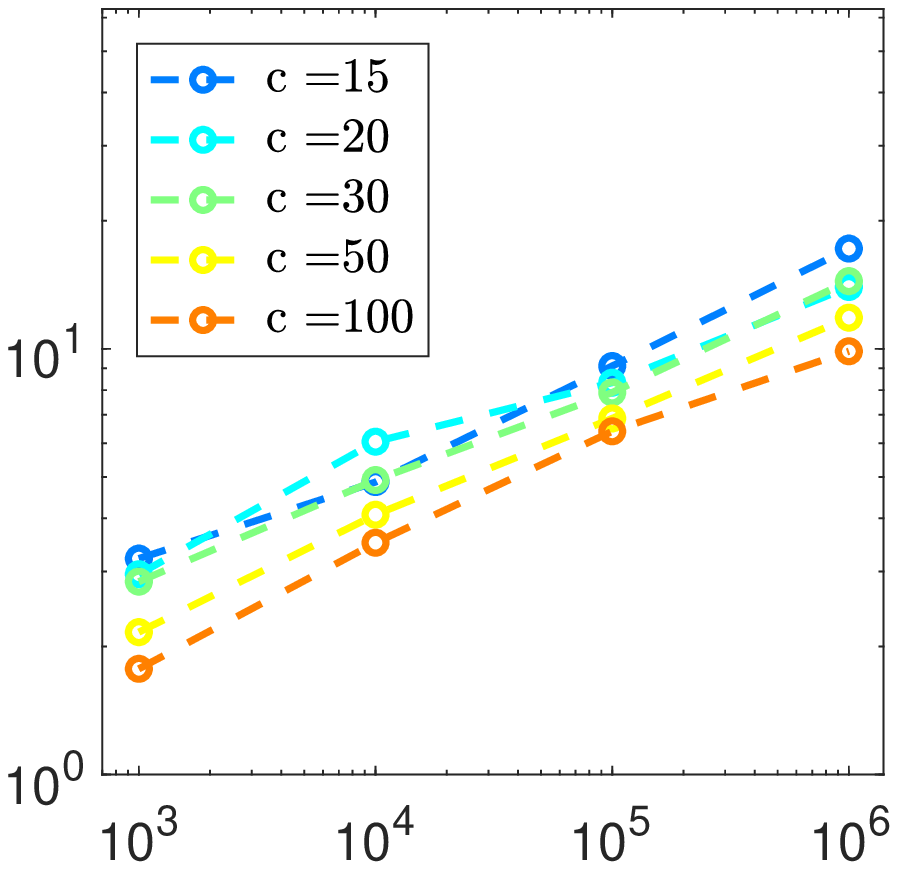}
}
~
\subfigure[Hyperrect., $\alpha=0.9$.]
{\includegraphics[width=0.3\textwidth]{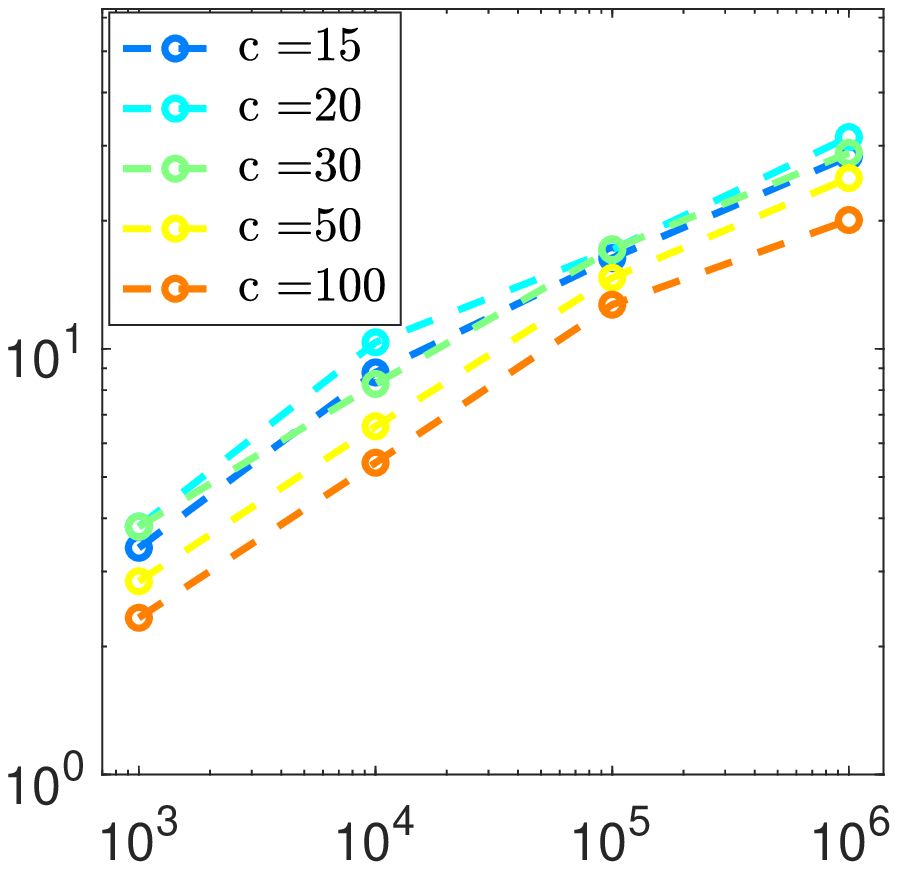}
}
~
\subfigure[Hyperrect., dynamic $\alpha$.]
{\includegraphics[width=0.3\textwidth]{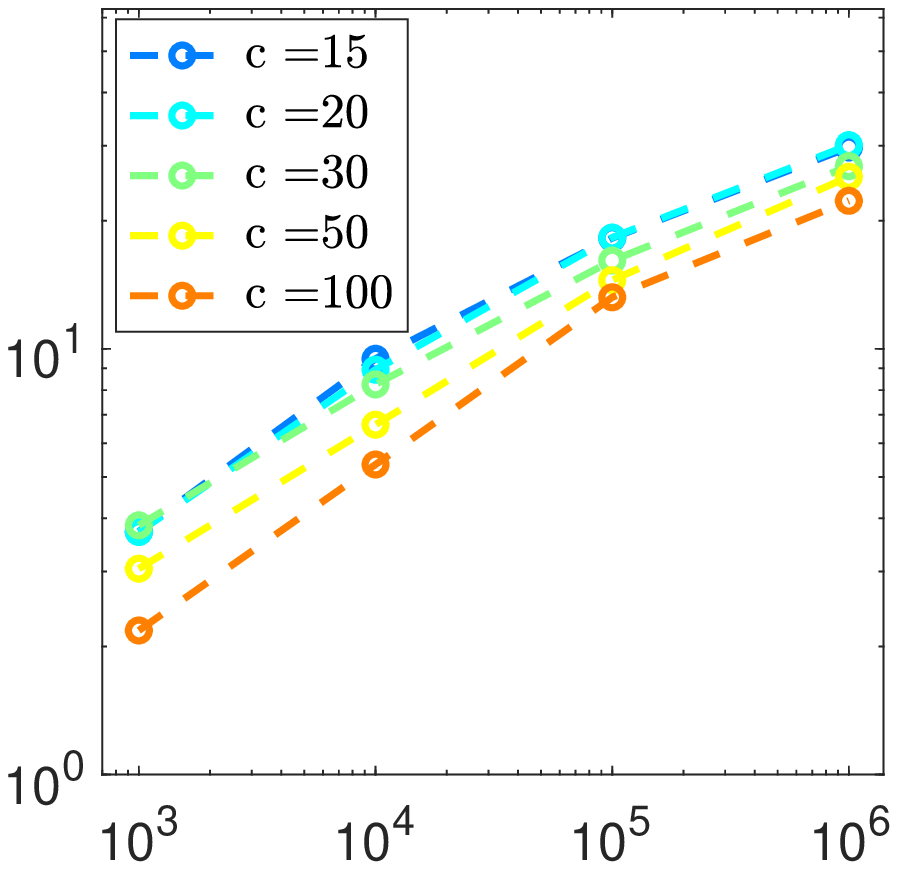}
}
~
\subfigure[Simplices, $\alpha=0$.]
{\includegraphics[width=0.3\textwidth]{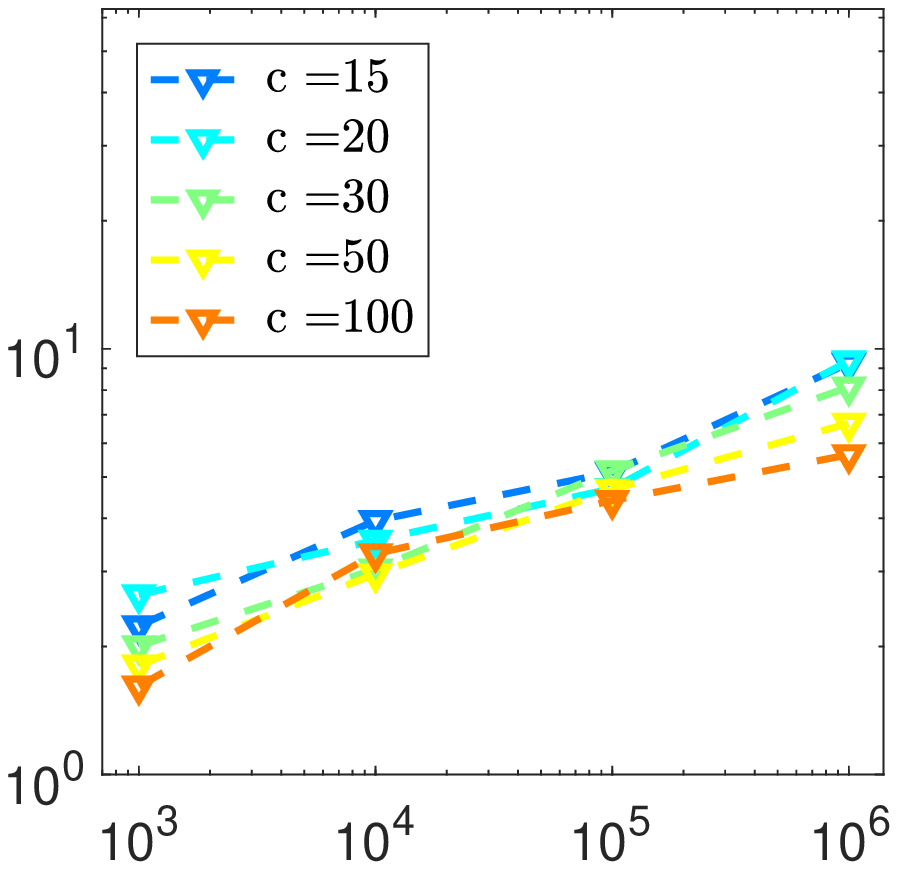}
}
~
\subfigure[Simplices, $\alpha=0.9$.]
{\includegraphics[width=0.3\textwidth]{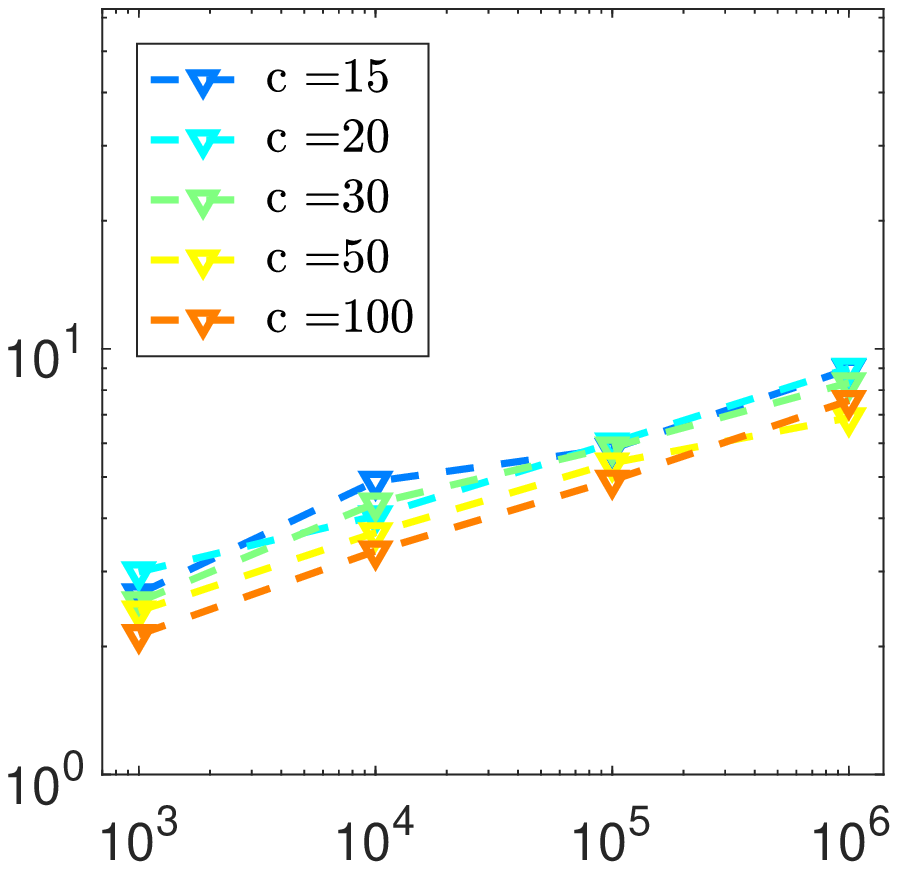}
}
~
\subfigure[Simplices, dynamic $\alpha$.]
{\includegraphics[width=0.3\textwidth]{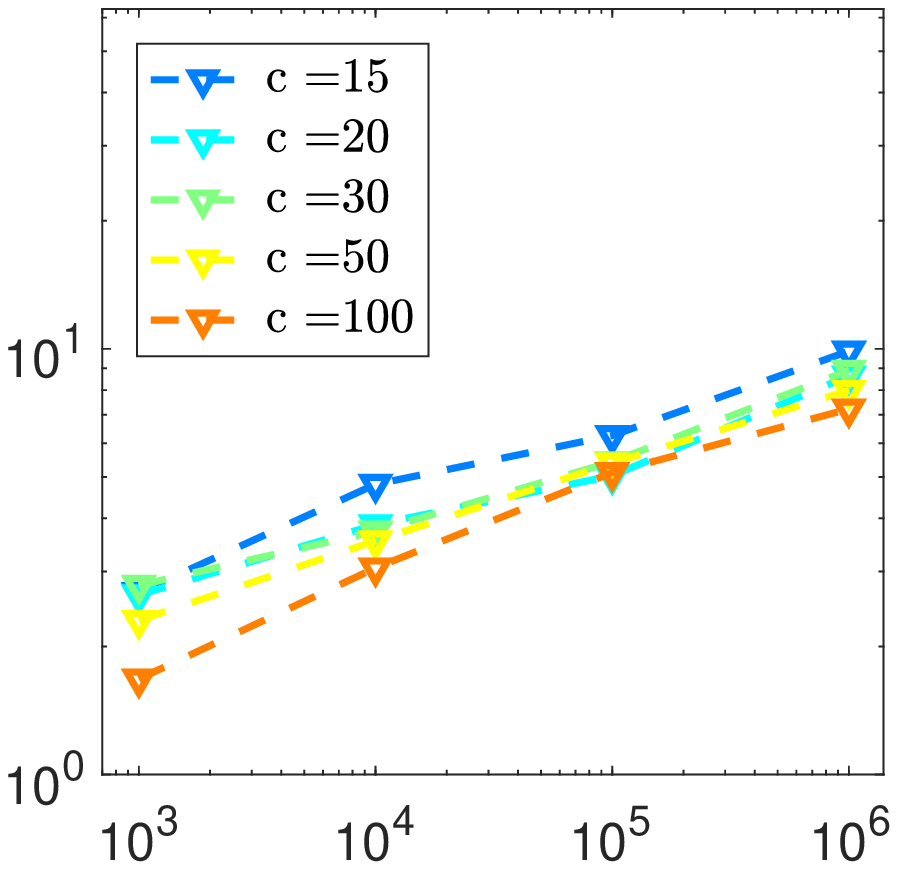}
}
\caption{Sod test case described by the Euler equations. Hyperrectangular and simplex tessellation, sampling with $\alpha$ being 0, 0.9 or set dynamically, and 1000 repetitions, variable $N_{\textup{max}}$ and sampling constant~$c$.}
\label{fig:speedups_P101}
\end{figure}

\subsection{Vertical equilibrium model of CO$_2$ storage}
\label{sec:num-res:CO2}
Subsurface permanent CO$_2$ storage involves injection of large
quantities of CO$_2$ into porous reservoirs with largely unknown
physical properties. The varying temporal and physical scales calls
for simplified-physics models, achieved, e.g., by vertical equilibrium
assumptions and dimension reduction by integration over the vertical
direction~\cite{Nordbotten_Celia_11}.  The result is a nonlinear
hyperbolic conservation law with discontinuous flux function, where
the solution represents the saturation of CO$_2$ as a function of
space and time. For reproducibility, Table~\ref{tab:co2-setup}
contains all values relevant for the setup of the problem, including
distributions for four random parameters: permeability, brine
mobility, CO$_2$ mobility, and background flow rate.  A more detailed
description of the problem, including the conservation law and its
time-dependent solution, can be found in~\cite{MacMinn_etal_10,
  Pettersson_16}. Although not explicitly referred to in this paper,
Table~\ref{tab:co2-setup} lists notation for various parameters
consistent with~\cite{Pettersson_16}, also to facilitate
reproducibility.

\begin{table}
  \centering
  \begin{tabular}{lll}  
    \toprule
    \cmidrule(r){2-3}
%    \cmidrule(r){4-5}
%    \cmidrule(r){6-7}
%    \cmidrule(r){8-9}
    Parameter & Notation & Value/Distribution \\
    \midrule
    Porosity & $\phi$ & 0.15 \\
    Residual brine saturation     & $S_{\textup{br}}$ & 0.1 \\
    Residual CO$_2$ saturation     & $S_{\textup{cr}}$ & 0.1 \\
    Slope angle & $\theta$ & 0.005 \\
    Injection time & $\tau$ & 20 years \\
    Injection rate & $Q_{\textup{inj}}$ & $1 \times 10^{-7}$ m/s \\
    CO$_2$ mobility & $\lambda_{\textup{c}}$ & Uniform $[0.7,\ 1.3] \times 6.25 \times 10^{-5}$ ms/kg\\
    Brine mobility & $\lambda_{\textup{b}}$ & Uniform $[0.8,\ 1.2] \times 5 \times 10^{-4}$ ms/kg \\
    Background flow & $Q$ & Exponential, mean $1 \times 10^{-9}$ m/s \\
    Permeability & $k$ & Lognormal, mean 200 mD, std 50 mD \\
    \bottomrule
  \end{tabular}
  \caption{\label{tab:co2-setup} Parameter setup for the CO$_2$ storage test problem.}
\end{table}
The QoI is the height of the CO$_2$ plume 100 m downstream of the
injection location, 600 years after the end of injection. Speedups for
the CO$_2$ problem as a function of the total number of samples are
shown in Fig.~\ref{fig:speedups_P100}. Speedup is observed for all
sampling schemes, even for the smallest sizes of sample sets. This
problem exhibits zero or small variability away from the
discontinuities, as indicated in Fig.~\ref{fig:surfaces:CO2}, making
it a particularly suitable candidate for the proposed method. At the
same time, the number of dimensions ($n=3$ due to $Q$ being a function
of the ratio of the phase mobilities, rather than their individual
values) already requires a large number of samples to give speedups
greater than an order of magnitude.

\begin{figure}[h]
\centering
\subfigure[Hyperrect., $\alpha=0$.]
{\includegraphics[width=0.3\textwidth]{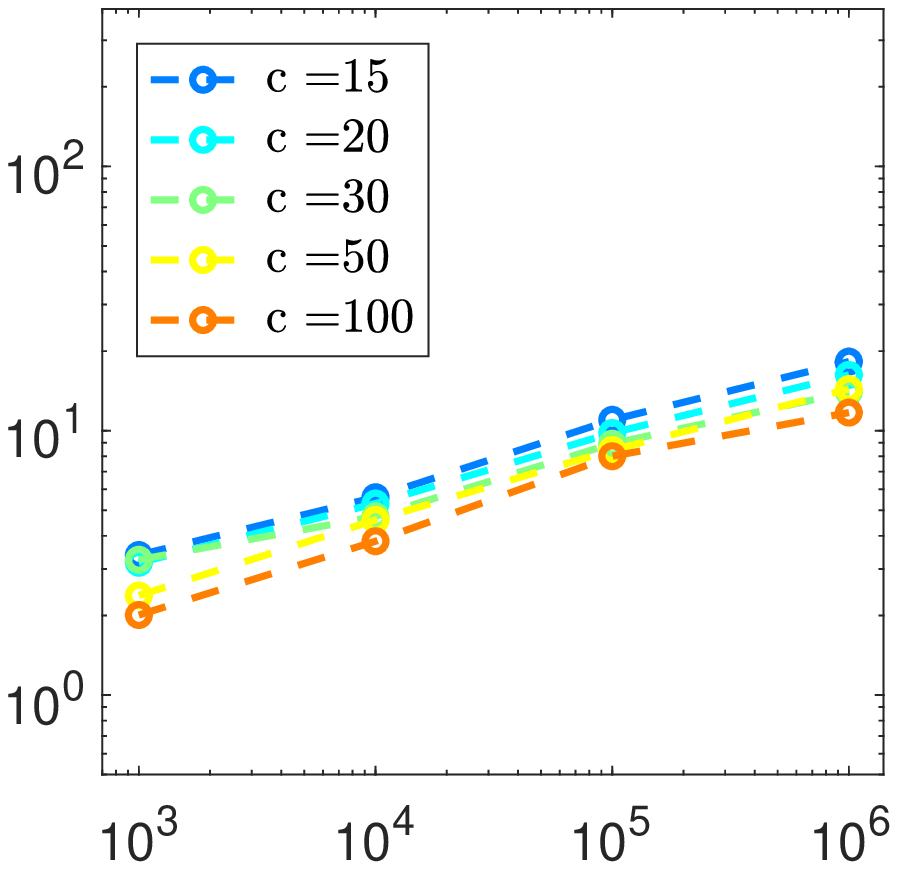}
}
~
\subfigure[Hyperrect., $\alpha=0.9$.]
{\includegraphics[width=0.3\textwidth]{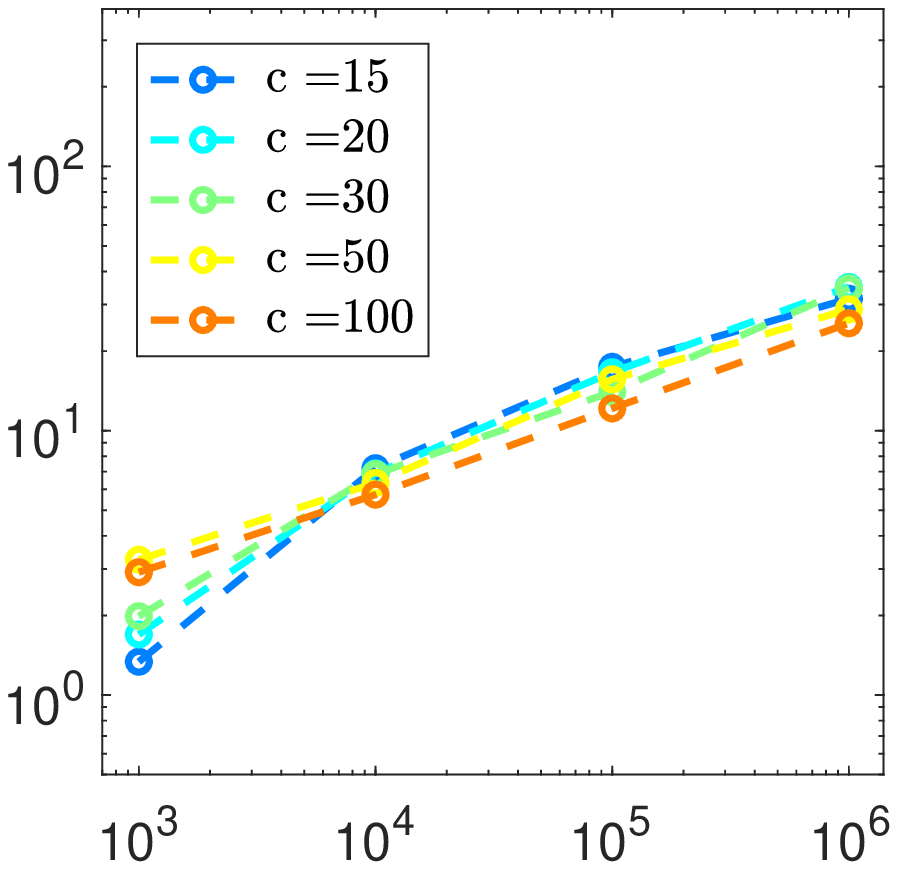}
}
~
\subfigure[Hyperrect., dynamic $\alpha$.]
{\includegraphics[width=0.3\textwidth]{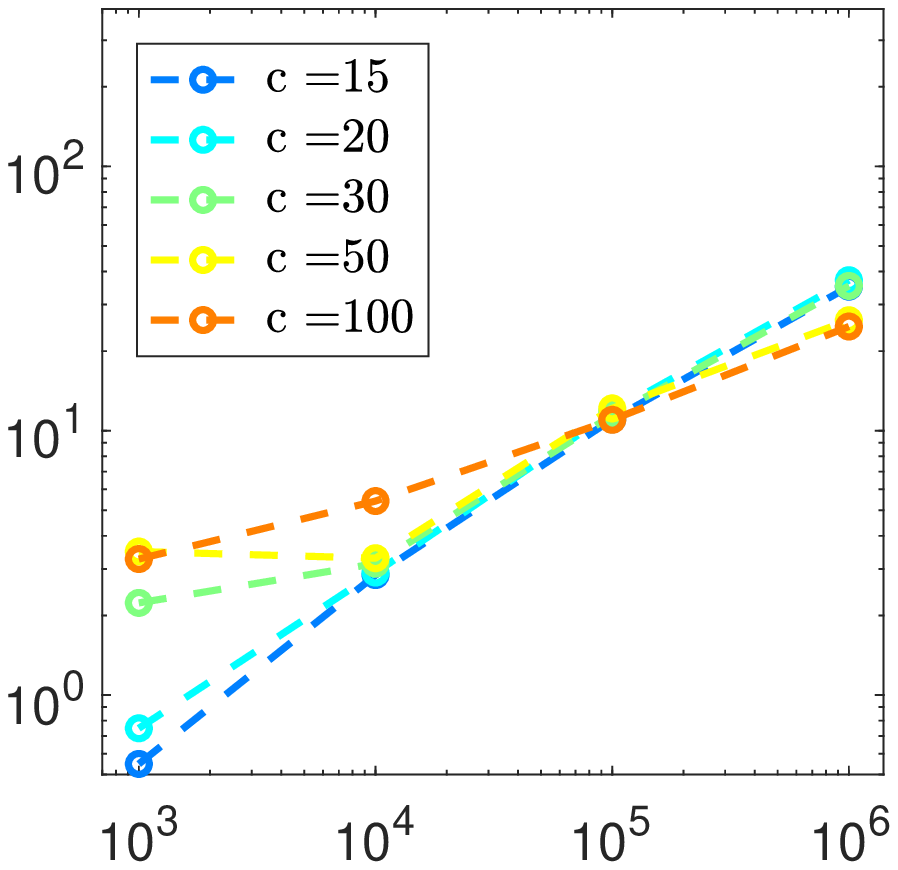}
}
~
\subfigure[Simplices, $\alpha=0$.]
{\includegraphics[width=0.3\textwidth]{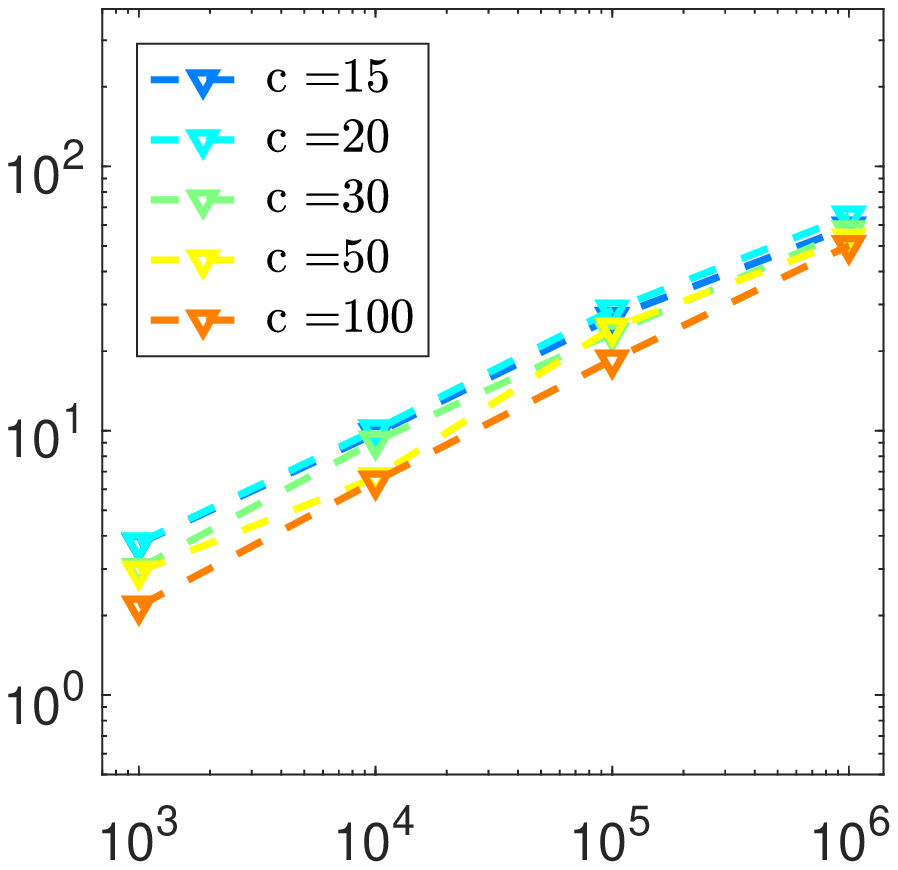}
}
~
\subfigure[Simplices, $\alpha=0.9$.]
{\includegraphics[width=0.3\textwidth]{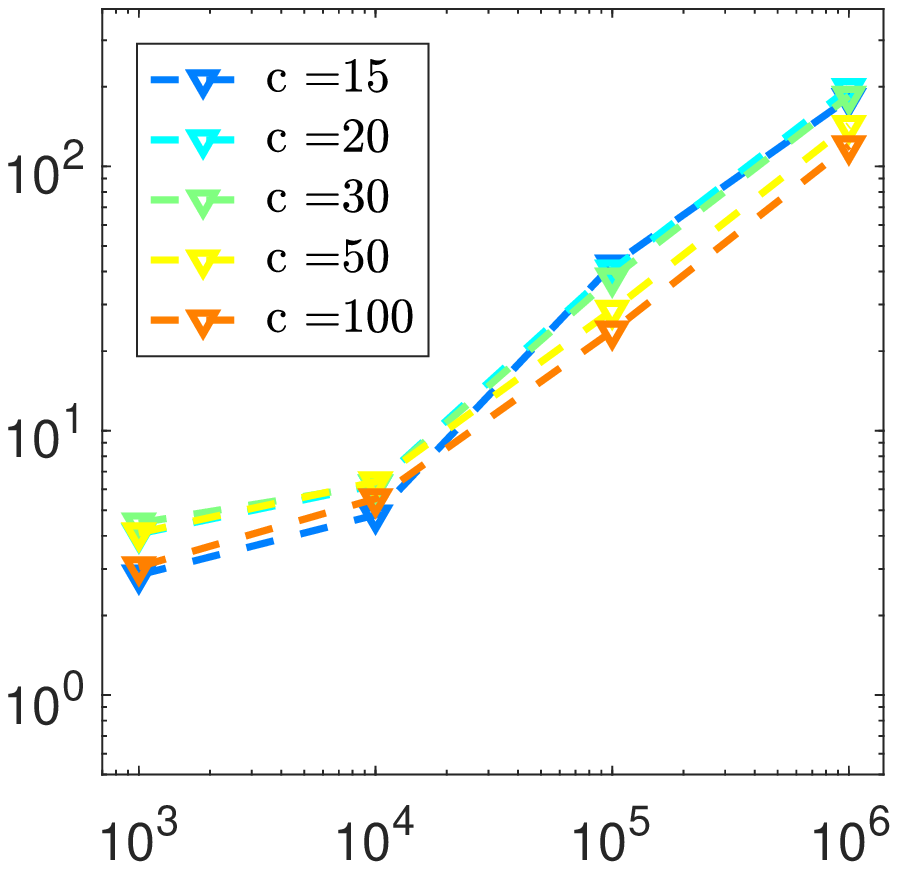}
}
~
\subfigure[Simplices, dynamic $\alpha$.]
{\includegraphics[width=0.3\textwidth]{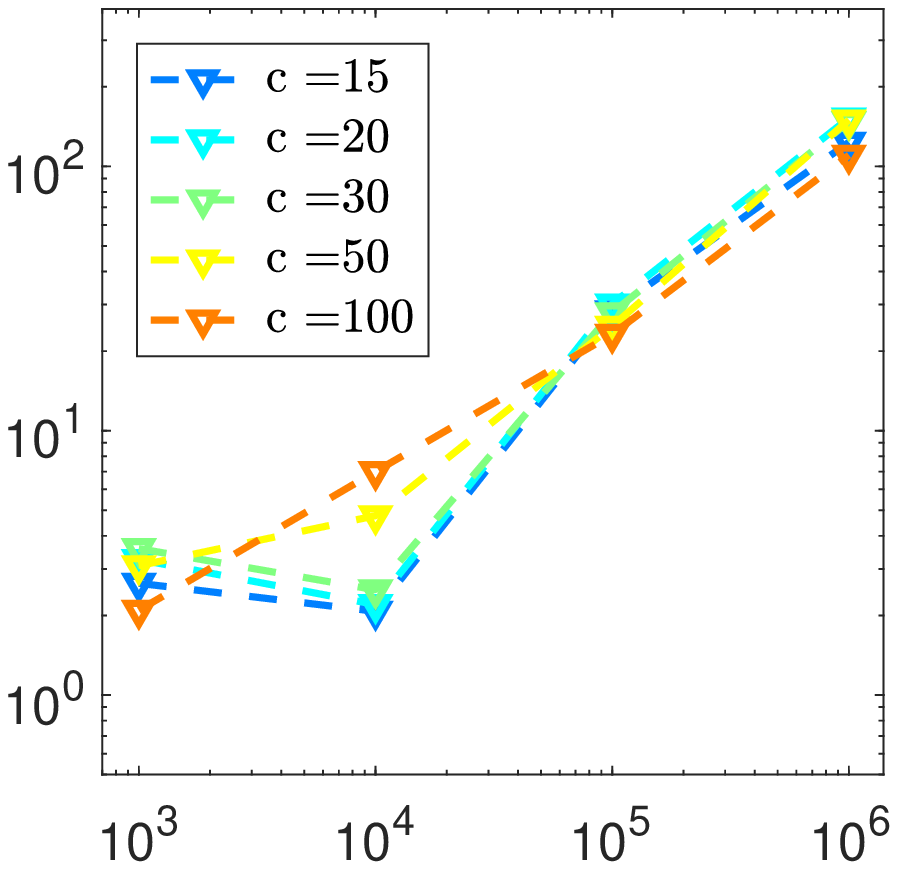}
}
\caption{CO$_2$ storage problem. Hyperrectangular and simplex tessellation, sampling with $\alpha$ being 0, 0.9 or set dynamically, and 1000 repetitions, variable $N_{\textup{max}}$ and sampling constant~$c$.}
\label{fig:speedups_P100}
\end{figure}

%%% Local Variables:
%%% mode: latex
%%% TeX-master: "Master"
%%% End:

\section{Conclusions}
\label{sec:concl}

We have introduced a novel stratified sampling method with adaptive
stratification and sequential, hybridized allocation of samples. For a
fixed stratification, the sample allocation asymptotically approaches
a prescribed linear combination of proportional and optimal
allocation. Letting a fraction of samples be allocated proportionally
adds robustness to the adaptive method, as the stratum standard
deviations are not a priori known but estimated on-the-fly as samples
are added. Moreover, a greedy approach is used to split the stratum
that results in the largest reduction of the stratified sampling
estimator's current variance. Kurtosis dependent estimates are
provided that allow to quantify the probability of drastically
underestimating local strata variances and, thus, of failing to
identify strata that should be refined.

To maintain prescribed stratum sampling rates and for ease of
implementation, a new stratum that arises after re-stratification
should be contained within a single parent stratum, and it should
retain the same geometrical shape. In this work, we suggest using
hyperrectangles or simplices for the stratification, as either of
these shapes can be bisected and the result is two new hyperrectangles
or simplices. In fact, both classes of tessellations allow for a
flexible partition of the stochastic domain.

The proposed method is anticipated to result in significant speedups
for problems where the variability is localized in random space, e.g.,
the PDE solutions describing physical problems with uncertain
parameters and exhibiting steep gradients or discontinuities. In
contrast to, e.g., localized response surface methods based on function
approximations, an advantage of the proposed method is that accurate
identification of steep features is not necessary to obtain good
results. Confining sharp features to strata of small measure is often
sufficient to vastly outperform standard Monte Carlo sampling. This
has been verified experimentally through various test cases,
exhibiting speedups of up to three orders of magnitude compared to
standard Monte Carlo.

%{\color{purple}Do we want to add an outlook on combining the proposed
%  framework with other, possibly more effective sampling-based
%  estimators?}

%\section*{Code availability}
%The codes (Matlab and Python) developed for this paper are available from the authors upon request.

%%% Local Variables:
%%% mode: latex
%%% TeX-master: "Master"
%%% End:

\section*{Acknowledgements}

The authors would like to thank Eirik Keilegavlen at the University of Bergen for the setup of the fault  surface stress problem.
The first author was funded by the Research Council of Norway through the project Quantification of fault-related leakage risk (FRISK) under project number 294719. A CC BY or equivalent licence is applied to the author accepted manuscript arising from this submission, in accordance with the grant's open access conditions.

\appendix
\section{Derivation of the gradient of the variance constant}
\label{sec:gradient:varconst}

Introduce the functions 
\begin{equation*}
  g_S\colon \mathbb{R}^{\lvert \mathcal{S}\rvert} \to \mathbb{R}\;,\quad \boldsymbol{\sigma}\mapsto g_S(\boldsymbol{\sigma}) = \frac{\sigma_S}{\langle \boldsymbol{p}, \boldsymbol{\sigma}\rangle }  
\end{equation*}
for any $S\in\mathcal{S}$. Then 
\begin{equation*}
  \boldsymbol{\sigma}\mapsto C_\alpha(\boldsymbol{\sigma}) =  \sum_{S\in\mathcal{S}}\frac{p_S \sigma_S^2}{1+\alpha\left(g_S(\boldsymbol{\sigma}) - 1 \right)}\;,
\end{equation*}
so that $V_\alpha = C_{\alpha}(\boldsymbol{\sigma})/N$. To
quantify the robustness of the variance reduction with respect to
perturbation in $\boldsymbol{\sigma}$, we compute the gradient of $C_{\alpha}$.
Therefore, we first compute the partial derivative of $g_S$ with
respect to the component $\sigma_U$ of $\boldsymbol{\sigma}$ as
\begin{equation*}
    \partial_{\sigma_U} g_S(\boldsymbol{\sigma}) =  \frac{\delta_{S,U}}{\langle \boldsymbol{p}, \boldsymbol{\sigma}\rangle} - \frac{p_U \sigma_S }{{\lvert \langle \boldsymbol{p}, \boldsymbol{\sigma}\rangle \rvert}^2}\;,
\end{equation*}
where $\delta_{S,U}$ denotes the Kronecker delta: $\delta_{S,U} = 1$
if $S=U$ and $\delta_{S,U} = 0$ else. Next we compute the component of
the gradient of $C_{\alpha}$ with respect to $\sigma_U$, which, after
some algebra, can be written as:
\begin{equation*}
    \partial_{\sigma_U} C_{\alpha}(\boldsymbol{\sigma}) 
  = \frac{p_U\sigma_U}{1+\alpha\left(g_U(\boldsymbol{\sigma}) - 1 \right)}\left(1  
    +\frac{1-\alpha}{1+\alpha\left(g_U(\boldsymbol{\sigma}) - 1 \right)}\right)
  + \frac{\alpha p_U}{{\lvert \langle \boldsymbol{p}, \boldsymbol{\sigma}\rangle \rvert}^2} \sum_{S\in\mathcal{S}}\frac{p_S\sigma_S^3 }{{\left(1+\alpha\left(g_S(\boldsymbol{\sigma}) - 1 \right)\right)}^2}\;.
\end{equation*}
Finally, using the fact that
\begin{equation*}
  1+\alpha\left(g_S(\boldsymbol{\sigma}) - 1 \right) = \frac{\alpha \sigma_S + (1-\alpha) \langle \boldsymbol{p}, \boldsymbol{\sigma}\rangle}{\langle \boldsymbol{p}, \boldsymbol{\sigma}\rangle}
\end{equation*}
we can eventually write the partial derivative of $C_{\alpha}$ as
\begin{equation*}
\partial_{\sigma_U} C_{\alpha}(\boldsymbol{\sigma}) = \frac{p_U\sigma_U \langle \boldsymbol{p}, \boldsymbol{\sigma}\rangle}{\alpha \sigma_U + (1-\alpha) \langle \boldsymbol{p}, \boldsymbol{\sigma}\rangle}\left(1  
    +\frac{(1-\alpha)\langle \boldsymbol{p}, \boldsymbol{\sigma}\rangle}{\alpha \sigma_U + (1-\alpha) \langle \boldsymbol{p}, \boldsymbol{\sigma}\rangle}\right)
  + \alpha p_U\sum_{S\in\mathcal{S}}\frac{p_S\sigma_S^3 }{{\left(\alpha \sigma_S + (1-\alpha) \langle \boldsymbol{p}, \boldsymbol{\sigma}\rangle \right)}^2}\;.
\end{equation*}

%%% Local Variables:
%%% mode: latex
%%% TeX-master: "Master"
%%% End:

\section{Further motivation for splitting adaptively}
\label{sec:further:motivation:splitting}
\subsection{Variance reduction by splitting}
Consider an arbitrary stratum $S$ with a partition
$S=S_{+} \cup S_{-}$ defining a refined stratification. We are
interested in the effect of splitting on the variance of the %optimal allocation
estimator. Without loss of generality, assume that
$\E{f(\bY)| \bY\in S} = 0$ (we may always subtract a constant without
changing the variance), and let the relative measure of $S_{+}$ and
$S_{-}$ be $q$ and $1-q$ (the measure of $S$ itself does not
matter). Then,
\begin{multline}
\label{eq:effect_splitting}
\var{f(\bY) | \bY \in S} = \var{f(\bY) | \bY \in S_{+}} + \var{f(\bY) | \bY \in S_{-}} +
2 \cov{f(\bY) |\bY\in S_{+}}{f(\bY) |\bY\in S_{-}} \\
= \var{f(\bY) | \bY \in S_{+}} + \var{f(\bY) | \bY \in S_{-}} + \frac{2(1-q)}{q} \left( \E{f(\bY) |\bY\in S_{-}}\right)^2 \\
\geq q \var{f(\bY) | \bY \in S_{+}} + (1-q) \var{f(\bY) | \bY \in S_{-}},  
\end{multline}
where the second equality follows from $S_{+}$ and $S_{-}$ being
disjoint and the relation
$$\E{f(\bY)|\bY\in S} = q\E{f(\bY)|\bY\in S_{+}}+(1-q)\E{f(\bY)|\bY\in
  S_{-}}=0\;.$$ The right-hand side of Eq.~\eqref{eq:effect_splitting}
equals the variance contribution to the estimator %optimal allocation estimator
from the two newly created strata. Thus, we have shown that splitting
of a stratum always reduces the variance of the estimator. %, provided
that optimal allocation is maintained after splitting.

\subsection{Stratification of domains of discontinuous functions}

Let $S_{+}$ and $S_{-}$ be two subdomains that forms a partition of $\mathfrak{U} = [0,\ 1]^{n}$, separated by a hypersurface $H \subset \mathbb{R}^{n-1}$.
Let $f$ be a piecewise constant function on $\mathbb{R}^{n}$, defined by
\begin{equation}
f(\bY) = \left\{
\begin{array}{ll}
c_{+} & \mbox{if } \bY\in S_{+} \\
c_{-} & \mbox{if } \bY\in S_{-}
\end{array}
\right.,
\end{equation}
where $c_{+}$ and $c_{-}$ are constants, and consider a neighborhood
$S_{H}$ around $H$.  Denote the measure of $S_{+}$ and $S_{-}$ by
$p_{+} = \Pp{S_{+}}$ and $p_{-}=\Pp{S_{-}}$, respectively, and
$\tilde{p}_{+} = \Pp{S_{+} \cap S_H}$ and
$\tilde{p}_{-} = \Pp{S_{-} \cap S_H}$. Setting
$p = \tilde{p}_{+}/p_{S_{H}}$ we have
\begin{multline}
\label{eq:var_disc_fun}
\var{f(\bY) | \bY \in S_{H}} = \frac{\tilde{p}_{+}}{p_{S_H}} \left( 1-\frac{\tilde{p}_{+}}{p_{S_H}} \right) c_{+}^{2} + \frac{\tilde{p}_{-}}{p_{S_H}} \left( 1-\frac{\tilde{p}_{-}}{p_{S_H}} \right) c_{-}^{2} - 2 \frac{\tilde{p}_{+} \tilde{p}_{-} }{p_{S_H}^2} c_{+} c_{-} \\
= p(1-p)(c_{+} - c_{-})^2,
\end{multline}
where we have used that $p_{+}+p_{-}=1$, and $\tilde{p}_{+}+\tilde{p}_{-} = p_{S_{H}}$. Note that~\eqref{eq:var_disc_fun} is similar to the expression for the global variance,
\[
\var{f(\bY)}=p_{+}(1-p_{+})(c_{+} - c_{-})^2.
\]
To minimize the variance, we see from~\eqref{eq:var_disc_fun} that the stratum $S_{H}$ should be small, whereas the local variances in $S_{+}\setminus S_{H}$ and $S_{-}\setminus S_{H}$ are identically zero (since $f$ is constant).

\bibliographystyle{plain}
\bibliography{strat_samp_bib.bib}

\end{document}